\numberwithin{equation}{section}
\numberwithin{figure}{section}
\theoremstyle{plain}
\newtheorem{thm}{\protect\theoremname}[section]
\theoremstyle{plain}
\newtheorem{cor}[thm]{\protect\corollaryname}
\theoremstyle{remark}
\newtheorem{rem}[thm]{\protect\remarkname}
\theoremstyle{plain}
\newtheorem{lem}[thm]{\protect\lemmaname}
\theoremstyle{plain}
\newtheorem{prop}[thm]{\protect\propositionname}
\theoremstyle{defn}
\let\originalleft\left
\let\originalright\right
\renewcommand{\left}{\mathopen{}\mathclose\bgroup\originalleft}
\renewcommand{\right}{\aftergroup\egroup\originalright}
\date{}
\providecommand{\corollaryname}{Corollary}
\providecommand{\lemmaname}{Lemma}
\providecommand{\propositionname}{Proposition}
\providecommand{\remarkname}{Remark}
\providecommand{\theoremname}{Theorem}
\newcommand{\eps}{\varepsilon}
\newcommand{\frakm}{\mathfrak{m}}%
\newcommand{\vv}{\mathbf{v}}
\newcommand{\ww}{\mathbf{w}}
\renewcommand{\char}{char}
\newcommand{\A}{\mathbf{A}}
\newcommand{\R}{\mathbf{R}}
\renewcommand{\AA}{\mathbb{A}}
\newcommand{\CC}{\mathbb{C}}
\newcommand{\EE}{\mathbb{E}}
\newcommand{\FF}{\mathbb{F}}
\newcommand{\GG}{\mathbb{G}}
\newcommand{\HH}{\mathbb{H}}
\newcommand{\KK}{\mathbb{K}}
\newcommand{\NN}{\mathbb{N}}
\newcommand{\PP}{\mathbb{P}}
\newcommand{\QQ}{\mathbb{Q}}
\newcommand{\RR}{\mathbb{R}}
\newcommand{\ZZ}{\mathbb{Z}}
\newcommand{\calL}{\mathcal{L}}
\newcommand{\calO}{\mathcal{O}}
\DeclareMathOperator{\SL}{SL}%
\DeclareMathOperator{\PGL}{PGL}%
\DeclareMathOperator{\GL}{GL}%
\DeclareMathOperator{\spn}{span}%
\DeclareMathOperator{\rank}{rank}%
\DeclareMathOperator{\rk}{rk}%
\DeclareMathOperator{\al}{al}%
\DeclareMathOperator{\codim}{codim}%
\DeclareMathOperator{\Stab}{Stab}%
\DeclareMathOperator{\SO}{SO}
\DeclareMathOperator{\op}{op}
\DeclareMathOperator{\Ar}{Ar}
\DeclareMathOperator{\image}{Im}
\DeclareMathOperator{\gap}{gap}
\begin{document}

\author[O.\ Becker]{Oren Becker} \address{Oren Becker\hfill\break 	Department of Pure Mathematics and Mathematical Statistics\hfill\break 	Centre for Mathematical Sciences\hfill\break 	Wilberforce Road, Cambridge CB3 0WA, United Kingdom} \email{oren.becker@gmail.com}
\author[E.\ Breuillard]{Emmanuel Breuillard} \address{Emmanuel Breuillard\hfill\break 	Mathematical Institute \hfill\break Oxford OX1 3LB, United Kingdom} \email{breuillard@maths.ox.ac.uk}
\thanks{OB has received funding from the European Research Council (ERC) under the European Union's Horizon 2020 research and innovation programme (grant agreement No. 803711).}
\title{}
\title[Uniform spectral gaps and anti-concentration]{Uniform spectral gaps, non-abelian Littlewood-Offord and anti-concentration for random walks}

\begin{abstract}
We show that random walks on semisimple algebraic groups do not concentrate on proper algebraic subvarieties with uniform exponential rate of anti-concentration. This is achieved by proving a uniform spectral gap for quasi-regular representations of countable linear groups. The method makes key use of Diophantine heights and the Height Gap theorem. We also deduce a non-abelian version of the Littlewood--Offord inequalities and prove logarithmic  bounds for escape from subvarieties. In a sequel to this paper, we will show how to transform this uniform gap into uniform expansion for Cayley graphs of finite simple groups of bounded rank $G(p)$ over almost all primes $p$.
\end{abstract}

\maketitle

\tableofcontents

\section{Introduction}
Given a discrete group $\Gamma$, a unitary representation $\pi$ is said to have the \emph{spectral gap property} if there is $\eps>0$ and a finite set $S \subset \Gamma$ such that for every unit vector $v$
\begin{equation}\label{sgp}\max_{s \in S} \|\pi(s)v-v\| \ge \eps.\end{equation}
This notion plays a key role in infinite group theory, in the ergodic theory of group actions and in the theory of group $C^*$-algebras. For instance, to say that the regular representation $\lambda_\Gamma$ has the spectral gap property is equivalent to the \emph{non-amenability} of $\Gamma$. Similarly, to say that every unitary representation without invariant unit vector has the spectral gap property is equivalent to \emph{Kazhdan's property (T)} for $\Gamma$. In dynamics, one is naturally interested in group actions on measure preserving systems $(X,m)$ and is led to study the so-called \emph{Koopman representation} $\lambda_X$ on $L^2(X,m)$ given by $\lambda_X(\gamma)f(x)=f(\gamma^{-1}x)$.  If $m$ is a probability measure, one also considers the restriction $\lambda_X^0$ to the hyperplane of zero mean functions. The spectral gap property for $\lambda_X$ or $\lambda_X^0$ implies exponential decay of correlations,  rates of mixing and rates of convergence for the Kakutani central limit theorem for the associated random products. It is also closely related to the notion of \emph{strong ergodicity}. We refer the reader to the book \cite{bekka-valette}, the surveys \cites{bekka-survey, furman-survey, shalom-random} and to the articles \cites{furman-shalom, bekka-guivarch} for background and references. We shall call a pair $(\eps,S)$ as in \eqref{sgp} a pair of \emph{Kazhdan parameters} for $\pi$. The parameter $\eps>0$ is the \emph{Kazhdan constant} and $S$ the \emph{Kazhdan set} for $\pi$.

It is of interest to determine which unitary representations have the spectral gap property. When the property holds it is interesting to work out explicit Kazhdan parameters $(\eps,S)$.  In this paper, we establish a \emph{uniform spectral gap property} for certain unitary representations of linear groups. We will show that these representations have a \emph{uniform spectral gap property} in that the Kazhdan parameter $\eps$ can be chosen to be uniform over all Kazhdan sets $S$. 

We shall derive several consequences of this uniform gap. Most notably, we shall obtain logarithmic bounds on escape from subvarieties (see Corollary \ref{logescape}) as well as uniform anti-concentration estimates for random walks on linear groups. The latter (see Theorem \ref{LOvarieties} and \ref{naLO}) establishes a strong non-commutative version of the classical Littlewood-Offord estimates \cite{tao-vu}. In a sequel to this paper we deduce from it uniform expansion results for finite simple groups of bounded rank \cite{becker-breuillard-expander}.

\subsection{Uniform gap for quasi-regular representations}
 For a subgroup $H\leq \Gamma$ we let $\lambda_{\Gamma/H}$ be the \emph{quasi-regular representation} of $\Gamma$ with isotropy group $H$. Its representation space is $\ell^2(\Gamma/H)$ and $\Gamma$ acts by left translations. Eymard \cite{eymard} studied the spectral gap property for quasi-regular representations and coined the term \emph{co-amenable} to describe a subgroup $H\leq\Gamma$ for which $\lambda_{\Gamma/H}$ does not have the spectral gap property. See \cites{eymard, grigorchuk-nekrashevych, shalom-tams, glasner-monod, monod-popa, bekka-guivarch2006}. While it is futile to attempt to classify all co-amenable subgroups of an arbitrary linear group (such as the free group), one can seek geometric conditions on $H$ that are enough to guarantee the spectral gap property for $\lambda_{\Gamma/H}$ and even the uniform spectral gap property.

A preliminary remark is that the spectral gap property is inherited from quotient group actions and so are the Kazhdan parameters. In other words, if $X$ and $Y$ are $\Gamma$-sets and $\phi:X \to Y$ is a $\Gamma$-equivariant map, the spectral gap property for $\lambda_Y$  with Kazhdan parameters $(\eps,S)$ implies the spectral gap property for $\lambda_X$ with the same parameters (see Lemma \ref{quotient}). So in establishing the uniform spectral gap property we may always pass to a suitable quotient.

Let $K$ be a field and $\Gamma\leq \GL_d(K)$ a countable group.  If $\Gamma$ is virtually solvable, that is, contains a solvable subgroup of finite index, then it is amenable and no unitary representation of $\Gamma$ has the spectral gap property. Otherwise, $\Gamma$ admits a quotient that is Zariski-dense in a semisimple algebraic group. In view of the preliminary remark above, it is therefore natural to assume that $\Gamma$ is Zariski-dense in a  semisimple algebraic group $\GG\leq \GL_d$ defined over $K$. Passing to a subgroup of bounded index, one can further assume $\GG$ to be connected, see \ref{reform}. We can thus state our main result in this setting.

\begin{thm}[uniform spectral gap for quasi-regular representations]\label{main1bis} 
There is $\eps_d>0$ depending on $d$ only, such that the following holds.  Let $K$ be a field and $\GG\leq \GL_d$ a connected semisimple algebraic $K$-group. Suppose $H\leq \Gamma$ are countable subgroups of $\GG(K)$, with $H$ not Zariski-dense in $\GG$. Given $S\subset \Gamma$, for every unit vector $v\in \ell^2(\Gamma/H)$, there is $s\in S$ such that $$\|\lambda_{\Gamma/H}(s)v -v \| \ge \eps_d $$provided $S$ is finite and generates a Zariski-dense subgroup of $\GG$.
\end{thm}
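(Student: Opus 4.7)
The plan is a compactness-and-contradiction argument that converts a failure of the uniform spectral gap into a Zariski-dense finite subset of $\GG(\bar\QQ)$ which the Height Gap theorem forbids.

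First I would reduce: using the quotient inheritance principle (Lemma \ref{quotient}), I may replace $H$ by $\Gamma \cap \bar H$, where $\bar H \lneq \GG$ is the Zariski closure of $H$, and $\Gamma$ by $\langle S\rangle\cdot H$. Now negate the conclusion: there exist $K_n$, $\GG_n$, $\Gamma_n$, $H_n$, $S_n$ and unit vectors $v_n$ with $\max_{s \in S_n}\|\lambda_{\Gamma_n/H_n}(s) v_n - v_n\| \to 0$. Each $S_n$ uses only finitely many matrix entries, so by a Lefschetz-style specialization I may pass to a subsequence on which $\GG_n = \GG$ is a fixed connected semisimple $\ZZ$-group, $S_n \subset \GG(\bar\QQ)$, and the $\bar H_n$ fall into finitely many geometric types.

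The key input is the Height Gap theorem: since $\langle S_n\rangle$ is Zariski-dense in the semisimple $\GG$ and hence not virtually solvable, one has $\hat h(S_n) \ge c(d) > 0$, uniformly in $n$ and depending only on $\dim \GG$. This Diophantine non-degeneracy is the source of the promised $\eps_d$. Next, I would convert the almost-invariance of $v_n$ into geometric anti-concentration: the $H_n$-bi-invariant positive-definite matrix coefficient $\phi_n(\gamma) := \langle \lambda_{\Gamma_n/H_n}(\gamma) v_n, v_n\rangle$ is almost left-invariant by $S_n$, and a Cheeger/Markov argument turns this into a probability measure $\nu_n$ on the coset variety $\GG/\bar H_n$ that is approximately preserved by $\mu_n := \Unif(S_n)$ for many convolution steps. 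Since $\bar H_n$ is a proper subvariety of $\GG$ and the heights of the elements of $S_n$ are bounded below, an escape-from-subvarieties estimate (a known consequence of the Height Gap, used here without circular dependence on Theorem \ref{main1bis}) forces $\mu_n^{\ast k}\ast\nu_n$ to leave any neighborhood of $\bar H_n$ quickly, contradicting the concentration.

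The hard part is the translation step: turning an $\eps$-almost-invariant $\ell^2$-vector into a genuine probability measure on the algebraic variety $\GG/\bar H$ whose approximate $S$-invariance survives with height-compatible explicit constants. One cannot treat a single coset of $\bar H$ in isolation; all $\Gamma$-translates of $\bar H$ must be tracked simultaneously, which forces a place-by-place analysis over the number field generated by the entries of $S$, combined via the product formula. This is where Diophantine approximation and representation theory meet, and I expect it to be the technical heart of the proof.
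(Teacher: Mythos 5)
Your proposal is a compactness-and-contradiction argument, and the paper explicitly disavows exactly this route. In the authors' remarks (\S 1.5.3) they write that while the Furstenberg/Borel-density method for producing an invariant measure from a failed spectral gap ``can be pushed to yield uniformity of the Kazhdan constant when the isotropy group $H$ varies,'' all such methods ``fail to get uniformity with respect to $\Gamma$ or $S$,'' and ``while it is certainly worth contemplating a non-standard analysis approach \ldots{} attempts in that direction have failed so far.'' Your scheme is precisely that non-standard/compactness route. The paper's actual proof is constructive and quantitative, resting on two ingredients your proposal does not use at all: (1) the ping-pong with overlaps lemma, which bounds the operator norm $\|\sum_{\gamma\in F}\lambda(\gamma)\|$ for a finite set $F$ whose action on a discrete $\Gamma$-set satisfies a weak ping-pong condition with bounded overlap; and (2) a global-to-local place-by-place height argument, using the Height Gap theorem and the quasi-symmetrisation bound to locate a \emph{single} place $v$ at which concrete ping-pong partners can be built inside a bounded power of $S$. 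It is item (2) that supplies the uniformity in $\Gamma$ and $S$ that compactness cannot reach.

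Concretely, the unresolved gap in your proposal is the step you yourself flag as ``the technical heart'': passing from an almost-invariant unit vector $v_n\in\ell^2(\Gamma_n/H_n)$ to a probability measure $\nu_n$ on the algebraic variety $\GG/\bar H_n$ that is approximately invariant under $\mu_n=\Unif(S_n)$. The representation $\lambda_{\Gamma_n/H_n}$ lives on a \emph{discrete, countable} set, and there is no canonical push-forward to the variety $\GG/\bar H_n$; the standard mechanism (Furstenberg's lemma) produces an invariant measure on a \emph{compact projective} space $\PP(V\otimes k)$ only after fixing a local field $k$ in which $\Gamma_n$ is unbounded, and that choice of $k$ depends on $\Gamma_n$ and $S_n$ with no uniform control. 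This is exactly where the place-by-place analysis in Sections 4--6 of the paper is needed and where your outline stops. Your ``Lefschetz-style specialization'' to a subsequence with a fixed $\GG$ and ``finitely many geometric types'' of $\bar H_n$ also does not produce a limit object: the pairs $(\Gamma_n, H_n)$ have no natural convergent structure, and an ultraproduct of quasi-regular representations is not the quasi-regular representation of anything tractable. Two further misstatements: the escape-from-subvarieties lemma (Lemma~\ref{escape}) is \emph{not} a consequence of the Height Gap --- in the paper it is proved by an elementary linearisation argument (Lemma~\ref{basicescape}); and the Height Gap theorem bounds from below the \emph{normalized} height $\widehat h(S_n)$, not the heights of individual elements of $S_n$, so ``the heights of the elements of $S_n$ are bounded below'' is not what it gives you, and indeed that weaker information is insufficient to run the argument without the quasi-symmetrisation theorem \eqref{qs} to transfer the normalized height back to a genuine height after conjugation.
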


We stress that $\eps_d$ does not depend on the field $K$ nor its characteristic. Although we made no attempts at optimizing it, we shall give an explicit lower bound on $\eps_d$ in terms of the height gap constants from \cite{breuillard-annals} (see \eqref{explicitN}, \eqref{explicitNpos} and \eqref{expliciteps} in Section \S \ref{mainproofs}).  The height gap constants are effective and, although they have not been stated in \cite{breuillard-annals}, explicit bounds in terms of $d$ can be traced from the arguments therein. 

The uniformity in Theorem \ref{main1bis} allows us to obtain a uniform upper bound for the operator norm of an arbitrary probability measure on the group. We let $\|.\|_{\op}$ be the operator norm on $\ell^2(\Gamma/H)$. For a probability measure $\mu$ on $\Gamma$ we set: 
$$\beta(\mu):= 1 - \sup_{g \in \Gamma, L \lneq \GG} \mu(g L \cap \Gamma)$$
where $L$ runs over all proper algebraic subgroups of $\GG$.

\begin{cor}\label{cor1bis}Set $\eps'_d=\eps_d^2/2$. Let $H\leq \Gamma$ be countable subgroups of  $\GG(K)$ with $H$ not Zariski-dense in $\GG$. If $\mu$ is any probability measure on $\Gamma$, then $$\|\lambda_{\Gamma/H}(\mu)\|_{\op} \leq 1- \beta(\mu)\eps'_d\,\,.$$
\end{cor}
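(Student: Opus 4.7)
The plan is to convert the pointwise spectral gap from Theorem~\ref{main1bis} into the operator-norm bound via the standard identity (writing $\pi := \lambda_{\Gamma/H}$), valid for any unit vector $v$,
\[
\|\pi(\mu)v\|^{2} \;=\; 1 - \tfrac{1}{2}\sum_{g,h \in \Gamma}\mu(g)\mu(h)\,\|\pi(g^{-1}h)v - v\|^{2} \;=\; 1 - \tfrac{1}{2}\,\EE_{k \sim \nu}\bigl[\|\pi(k)v - v\|^{2}\bigr],
\]
where $\nu := \tilde\mu \ast \mu$ is the symmetric probability measure $\nu(k) = \sum_{h} \mu(h)\mu(hk)$ on $\Gamma$. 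I would immediately reduce to the case $\Gamma$ Zariski-dense in $\GG$, since otherwise $\Gamma$ itself lies in a proper algebraic subgroup, forcing $\beta(\mu) = 0$ and making the claim trivial.

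The central step is to control the ``almost-fixed'' set $T_v := \{s \in \Gamma : \|\pi(s)v - v\| < \eps_d\}$ for a unit vector $v \in \ell^{2}(\Gamma/H)$. For every finite $S \subseteq T_v$, the contrapositive of Theorem~\ref{main1bis} shows that $\langle S\rangle$ is not Zariski-dense in $\GG$. Since the Zariski topology on $\GG$ is Noetherian, the increasing family of closures $\overline{\langle S\rangle}$ (as $S$ ranges over finite subsets of $T_v$) stabilizes at $\overline{\langle T_v \rangle}$, which is therefore a proper algebraic subgroup $L_v \leq \GG$ containing $T_v$. Thus $T_v \subseteq L_v(K) \cap \Gamma$, and consequently $hT_v \subseteq (hL_v)(K) \cap \Gamma$ for every $h \in \Gamma$, so the definition of $\beta(\mu)$ yields $\mu(hT_v) \leq 1 - \beta(\mu)$.

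Summing against $\mu(h)$ produces $\nu(T_v) = \sum_{h}\mu(h)\mu(hT_v) \leq 1 - \beta(\mu)$, hence $\nu(\Gamma \setminus T_v) \geq \beta(\mu)$. Since $\|\pi(k)v - v\|^{2} \geq \eps_d^{2}$ whenever $k \notin T_v$, the identity above gives $\|\pi(\mu)v\|^{2} \leq 1 - \tfrac{1}{2}\eps_d^{2}\,\beta(\mu) = 1 - 2\beta(\mu)\eps'_d$. Taking square roots and invoking $\sqrt{1-2x} \leq 1 - x$ for $x \in [0,\tfrac{1}{2}]$ then closes the argument in the only non-trivial regime (otherwise the claimed bound is automatic from $\|\pi(\mu)\|_{\op} \leq 1$). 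The main subtle point I expect to highlight is the Noetherianity step: upgrading the finite-set hypothesis of Theorem~\ref{main1bis} to a single proper algebraic subgroup $L_v$ containing the whole set $T_v$ is precisely what allows the definition of $\beta(\mu)$ to be applied uniformly in $h$, which in turn gives the key inequality $\nu(T_v) \leq 1 - \beta(\mu)$.
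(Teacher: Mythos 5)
Your approach is the same as the paper's: you introduce $T_v=\{s:\|\pi(s)v-v\|<\eps_d\}$ (the paper calls it $B_v$), invoke Theorem~\ref{main1bis} to show $T_v$ sits inside a proper algebraic subgroup, bound $\nu(T_v)\le 1-\beta(\mu)$ for $\nu=\tilde\mu\ast\mu$, and feed this into $\|\pi(\mu)v\|^{2}=1-\tfrac12\EE_\nu\|\pi(k)v-v\|^{2}$. Your Noetherianity step to upgrade ``every finite subset of $T_v$ generates a non-Zariski-dense subgroup'' to ``$T_v\subset L_v$ for a single proper $L_v$'' is a harmless alternative to the paper's route, which simply observes that $\nu(T_v)=\sup_{T'\subset T_v\ \text{finite}}\nu(T')$ and applies Theorem~\ref{main1bis} to each finite $T'$; both are valid.

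However, there is an arithmetic slip at the end that matters. With $\eps'_d=\eps_d^2/2$ one has $\tfrac12\eps_d^{2}\beta(\mu)=\eps'_d\beta(\mu)$, \emph{not} $2\eps'_d\beta(\mu)$. So your displayed bound should read $\|\pi(\mu)v\|^{2}\le 1-\beta(\mu)\eps'_d$. Consequently the inequality $\sqrt{1-2x}\le 1-x$ does not apply: from $\|\pi(\mu)\|^{2}\le 1-\eps'_d\beta$ one only gets $\|\pi(\mu)\|\le\sqrt{1-\eps'_d\beta}\le 1-\tfrac12\eps'_d\beta$, which is weaker than the stated $\|\pi(\mu)\|\le 1-\eps'_d\beta$. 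In fact the paper's own proof stops at $\|\lambda_{\Gamma/H}(\mu)\|_{\op}^{2}\le 1-\eps'_d\beta(\mu)$ and simply declares the corollary proved; the passage from the squared-norm bound to the unsquared one in the statement is (up to halving the constant) left implicit. You evidently noticed this gap and tried to close it with the $\sqrt{1-2x}$ trick, but the extra factor of $2$ you used came from the arithmetic slip rather than from the argument. The honest conclusion of the method is the operator-norm-squared estimate, which is what is actually used downstream (e.g. in the proof of Theorem~\ref{LOvarieties}, only $\|\lambda_{\Gamma/H}(\mu)\|_{\op}\le e^{-\eps'_d\beta}$ is invoked, and that follows from either form after adjusting the constant).
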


Again, we stress that this bound is uniform over all $\Gamma$ and $H$: the only way $\|\lambda_{\Gamma/H}(\mu)\|_{\op}$ can be close to $1$ for some $\Gamma$ and $H$ is if $\mu$ is almost entirely supported on a coset of a proper algebraic subgroup of $\GG$. 

\subsection{Anti-concentration estimates for random walks} We now state some consequences of the above uniform spectral gaps for the classical question of bounding the probability that a product of independent random variables lies in some algebraic subvariety of $\GG$.  Let $k$ be a local field (i.e. $\RR$, $\CC$, a $p$-adic field, or a field of Laurent series over a finite field). 

\begin{thm}[anti-concentration on algebraic subvarieties]\label{LOvarieties}Let $G=\GG(k)\leq \GL_d(k)$ be a connected semisimple algebraic $k$-group. For a $G$-valued random variable $X$ let $$\beta(X):=1-\sup\{\PP(X \in gL)\mid L \textnormal{ proper algebraic subgroup of } G, g\in G\}.$$ Let $X_1,\ldots,X_n$ be independent $G$-valued random variables. Then for every proper closed algebraic subvariety $V$ of $G$ we have \begin{equation}\label{LOnon-var}\PP(X_1\cdots X_n \in V) \leq  C_V \cdot e^{ - c \sum_{i=1}^n \beta(X_i)},\end{equation}
where one can take $C_V=(1+\dim V)^{\frac{1}{2}} \cdot \deg(V)$ ($C_V=1$ if $V$ is a coset of an algebraic subgroup of $\GG$) and $c=\eps'_d  4^{-d^2}$. 
\end{thm}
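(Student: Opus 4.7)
The argument naturally splits into two parts: the case where $V$ is a coset of a proper algebraic subgroup---handled directly by Corollary~\ref{cor1bis}---and a geometric reduction of the general case to this coset case.

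\emph{Step 1: Coset case ($V = gL$ with $L \lneq \GG$).} By a standard approximation reducing arbitrary Borel distributions on $G$ to ones supported on a countable subgroup, I may assume each $\mu_i := \mathrm{dist}(X_i)$ is supported on a common countable Zariski-dense subgroup $\Gamma \le \GG(k)$ that also contains $g$. Set $H := L \cap \Gamma$. Since $\GG$ is connected and $\Gamma$ is Zariski-dense while $L$ is a proper algebraic subgroup, $H$ is not Zariski-dense in $\GG$, so Corollary~\ref{cor1bis} applies to the quasi-regular representation $\lambda_{\Gamma/H}$.

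Writing $\nu := \mu_1 * \cdots * \mu_n$, and denoting by $\delta_{eH}, \delta_{gH} \in \ell^2(\Gamma/H)$ the unit vectors at the indicated cosets, I would express the target probability as a matrix coefficient
\[
\PP(X_1\cdots X_n \in gL)\ =\ \nu(gH)\ =\ \bigl\langle \lambda_{\Gamma/H}(\nu)\,\delta_{eH},\ \delta_{gH}\bigr\rangle.
\]
Cauchy--Schwarz, submultiplicativity of the operator norm, Corollary~\ref{cor1bis}, and the inequality $1-x\le e^{-x}$ then give
\[
\PP(X_1\cdots X_n \in gL)\ \le\ \prod_{i=1}^n \|\lambda_{\Gamma/H}(\mu_i)\|_{\op}\ \le\ \prod_{i=1}^n \bigl(1 - \eps'_d\, \beta(\mu_i)\bigr)\ \le\ \exp\Bigl(-\eps'_d\sum_{i=1}^n \beta(X_i)\Bigr),
\]
using also that $\beta(\mu_i) = \beta(X_i)$ (the two suprema agree, since every coset $gL$ meeting $\Gamma$ equals $g'L$ for some $g' \in \Gamma$, and every proper algebraic subgroup of $\GG$ is admissible in both definitions). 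This proves the case $C_V = 1$ with $c = \eps'_d$.

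\emph{Step 2: General subvariety.} The heart of the argument is the geometric reduction from a general proper subvariety $V$ to the coset case. The target is a pointwise (or suitably probabilistic) estimate of the form $\mathbf{1}_V \le \sum_{j=1}^N \mathbf{1}_{g_j L_j}$ where $L_j \lneq \GG$ are proper algebraic subgroups, $g_j \in G$, and $N \lesssim (1+\dim V)^{1/2}\deg V$, at the price of a multiplicative loss of $4^{-d^2}$ in the exponential rate. The $\deg V$ factor is consistent with a Bezout-style intersection count; the $(1+\dim V)^{1/2}$ factor points to a Cauchy--Schwarz or Hilbert--Schmidt step; and the $4^{-d^2}$ loss reflects an iteration descending through the lattice of algebraic subgroups of $\GG$, whose chains have length at most $\dim \GG \le d^2$.

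\emph{Main obstacle.} The Step~2 geometric reduction is the principal difficulty: constructing an explicit covering (or weighted decomposition) of $V$ by subgroup cosets with the stated constants requires careful use of effective intersection theory and of the combinatorial/algebraic geometry of subgroups of $\GG \le \GL_d$. The Step~1 coset bound, by contrast, is essentially an immediate consequence of the uniform operator-norm estimate of Corollary~\ref{cor1bis}.
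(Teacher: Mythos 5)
Your coset bound is essentially the paper's: the matrix-coefficient computation
\[
\PP(X_1\cdots X_n\in gL)=\bigl\langle\lambda_{\Gamma/H}(\mu_1*\cdots*\mu_n)\delta_{eH},\delta_{gH}\bigr\rangle
\le\prod_i\|\lambda_{\Gamma/H}(\mu_i)\|_{\op}
\]
followed by Corollary~\ref{cor1bis} is exactly what the paper does. However, the ``standard approximation reducing arbitrary Borel distributions to ones supported on a countable subgroup'' is not routine here and the paper flags this explicitly: the quantity $\beta(X_i)$ is a supremum over uncountably many cosets $gL$, so a naive sampling/discretisation can a priori change the supremum. The paper devotes Proposition~\ref{reductioncount} and Lemmas~\ref{measurelem}--\ref{maxsub} to this point, and the argument requires decomposing $\mu$ by dimension of the supporting variety, a Bezout-controlled sampling lemma, and the structural fact that maximal proper algebraic subgroups of a semisimple group have bounded degree. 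Your sketch does not supply a substitute for this.

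\textbf{Step 2 (general subvariety) — a genuine gap.} Your proposed mechanism, a covering $\mathbf{1}_V\le\sum_{j=1}^N\mathbf{1}_{g_jL_j}$ by $N\approx(1+\dim V)^{1/2}\deg V$ cosets of proper algebraic subgroups, does not exist in general. For instance, a generic hypersurface in $\SL_2$ (or in any semisimple $\GG$) is not contained in any finite union of cosets of proper algebraic subgroups; a finite union of cosets is a union of translates of lower-dimensional subvarieties whose translates have bounded complexity, and a generic hypersurface is not of this form. So the reduction you are aiming for is impossible as stated, and no amount of ``effective intersection theory'' will produce it.

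The paper's actual route is a decoupling/second-moment argument, not a covering. One splits the product as $X_1\cdots X_n = XY$ with $X=X_1\cdots X_r$, $Y=X_{r+1}\cdots X_n$, where $r$ is chosen so $\sum_{i\le r}\beta_i$ and $\sum_{i>r}\beta_i$ are each comparable to $\tfrac12\sum\beta_i$. Taking an independent copy $Y'$ of $Y$, Cauchy--Schwarz gives
\[
\PP(XY\in V)^2\le\EE\bigl[\PP(X\in VY^{-1}\mid X)^2\bigr]=\PP\bigl(X\in VY^{-1}\cap VY'^{-1}\bigr),
\]
and one then splits into two events: either $VY^{-1}=VY'^{-1}$, which forces $Y^{-1}Y'$ into the right-stabiliser $H_V=\{g\mid Vg=V\}$ (a proper algebraic subgroup, handled by the coset case applied to $Y^{-1}Y'$), or $VY^{-1}\cap VY'^{-1}$ has strictly smaller dimension and degree at most $(\deg V)^2$ by Bezout, and one recurses on $\dim V$. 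The factor $(1+\dim V)^{1/2}$ and the rate loss $4^{-\dim V}\ge 4^{-d^2}$ drop out of this induction on dimension — not from a Hilbert--Schmidt estimate or a chain of subgroups. Your ``main obstacle'' paragraph is therefore correctly identified as an obstacle, but the decomposition you set out to construct is not the right object; the inductive decoupling is.
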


Here $\deg(V)$ denotes the degree of $V$ in some fixed projective embedding of $\GG$, see Section \ref{var}. In Theorem \ref{LOvarieties} the probability distributions are arbitrary and not assumed to be symmetric nor supported on a countable set. Although the result will be deduced from the special case of finitely supported measures, the reduction is not routine due to the fact that there are uncountably many cosets $gL$.

A direct corollary of Theorem 1.3 is the following novel uniform anti-concentration estimate: For a fixed $k\geq 3$, if each $X_i$ is distributed uniformly on a set $S_i$ of cardinality $k$, and if $S_i^{-1}S_i$ generates a Zariski-dense subgroup of $G$ for each $i$ (implying $\beta(X_i) \ge 1/k$), then for every $V$ as above, the rate of exponential anti-concentration depends only on $d$ and $k$:\begin{equation}\label{exdec}\PP(X_1\cdots X_n \in V) \leq C_V e^{-cn/k}.\end{equation}

We will derive Theorem \ref{LOvarieties} directly from Theorem \ref{main1bis}. An important special case of the above theorem is when $V$ is a proper algebraic subgroup. In this case, the theorem follows directly from Theorem \ref{main1bis}. Anti-concentration on subgroups is a key ingredient in establishing spectral gap estimates for Cayley graphs of large finite groups via the so-called Bourgain-Gamburd machine; see \cite{BGGT}. In \cite{becker-breuillard-expander} we will use Theorem \ref{LOvarieties} to find new families of uniform expanders. In turn, this will be a crucial ingredient in \cite{becker-breuillard-varju} where we study character varieties of random groups.

Anti-concentration estimates on varieties for i.i.d. random walks have been independently studied in \cite{aoun-ihp} as well as in \cite{bourgain-gamburd2, benoist-saxce, breuillard-note}, where the theory of random matrix products is used to derive an exponential estimate for a certain special class of varieties.  However, these methods have shortcomings inasmuch as they often require special hypotheses, such as proximality, and they are not uniform in the probability law of the random variables. In  \cite{lubotzky-meiri} and \cite{desaxce-he-torus} exponential non-concentration of random walks on subvarieties is shown for arbitrary subvarieties using a sieving procedure together with the spectral gap estimates for finite quotients that follow from super-strong approximation \cites{pyber-szabo, breuillard-green-tao-linear, salehi-varju, breuillard-oh}. See \cite{breuillard-standrews} for a description of this method. Theorem \ref{LOvarieties} above recovers all these results, but proves something of a much stronger nature. Indeed, a key feature of Theorem \ref{LOvarieties}, which is absent from all earlier works, is the uniformity of the rate of exponential decay. In the example \eqref{exdec} for instance, the rate depends only on the dimension $d$ and the cardinality $k$ of the support of the probability measures. 

Note that our bound is uniform regardless of the magnitude of the $X_i$ and depends on the variety only through the multiplicative constant and only via the degree of the variety. Inasmuch, it belongs to the family of results that is usually attached to the names of  Littlewood and Offord, who studied such anti-concentration bounds for sums of independent random variables in their study of real roots of random polynomials \cite{littlewood-offord}. In that commutative context, anti-concentration on varieties has recently been studied in \cites{spink,fox-kwan-spink}, where the authors obtained  inverse square root bounds.  Inverse square root bounds are optimal without further assumptions on the distribution of the random variables, but as Theorem \ref{LOvarieties} shows, much stronger exponential bounds hold under mild assumptions of semisimplicity and non-concentration on algebraic subgroups.

\subsection{Non-commutative Littlewood-Offord estimates} In the special case when the variety $V$ is a point, the above estimate can be seen as a genuine non-abelian version of the Littlewood--Offord inequality \cites{littlewood-offord, tao-vu, nguyen-vu}. Recall that this inequality (improved by Erd\H{o}s in \cite{erdos} as follows), says that if $a_1,\ldots,a_n$ are real numbers and $X_i$, $i=1,\ldots,n$, are independent random variables taking values $a_i$ or $-a_i$ with probability $\frac{1}{2}$, then 
\begin{equation}\label{LOoriginal}\sup_{b \in \RR} \PP(X_1+\ldots+X_n = b) \leq 2^{-n} {n \choose \lfloor n/2 \rfloor} = O(\frac{1}{\sqrt{n}}).\end{equation}
 The key features of the Littlewood--Offord bound (as opposed to classical normal approximation limit theorems such as the Berry-Esseen bound for which moment assumptions are required) are that it holds for independent but not necessarily identically distributed random variables and that the bound is uniform regardless of the magnitude of the $a_i$'s.

A non-abelian version of the Littlewood--Offord inequality was obtained by Tiep and Vu in \cite{tiep-vu} (see also \cites{hoi, spink}) for products of independent random variables in $\GL_d(\CC)$ with a similar inverse square-root bound under very mild assumptions on the $X_i$ (that there should not be too many of small order). More recently, Juskevicius and Semetulskis \cite{JSLO} gave a very short proof of \eqref{LOoriginal}, that works uniformly for all torsion-free (not necessarily abelian) groups. 

The example of the free group tells us that we should expect a much stronger bound under further non-commutativity assumptions. The version below yields such a stronger bound, which is typically exponential in the number of variables, provided the $X_i$ are not too heavily supported on virtually solvable subgroups of $\GL_d(k)$. Below $k$ is an arbitrary local field.

\begin{thm}[non-abelian Littlewood--Offord inequality]\label{naLO} There is a constant $c_d>0$ such that the following holds. Let $k$ be a local field, $X_1,\ldots,X_n$ be $n$ independent $\GL_d(k)$-valued random variables. Let $\beta_i := 1-\sup \{ \PP(X_i \in gL)\mid g \in \GL_d(k), L\leq \GL_d(k) \textnormal{ virtually solvable}\}$. Then 
$$\sup_{g \in \GL_d(k)} \PP(X_1\cdots X_n = g) \leq e^{-c_d \sum_{i=1}^n \beta_i}.$$
\end{thm}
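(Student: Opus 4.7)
The strategy is to deduce Theorem~\ref{naLO} from the $V=\{g\}$ case of Theorem~\ref{LOvarieties} (where $C_V=1$) by (i) reducing to a connected semisimple ambient group, and (ii) closing the gap between the algebraic-subgroup parameter $\beta^{AG}$ from Theorem~\ref{LOvarieties} and the stronger virtually-solvable parameter $\beta^{VS}$ here by induction on dimension.

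\emph{Reduction to the semisimple case.} By approximation we may assume each $\mu_i$ is finitely supported. Let $\Gamma := \langle\bigcup_i \supp(\mu_i)\rangle$, let $\GG^\circ$ be the identity component of the Zariski closure of $\Gamma$ in $\GL_d$, $R$ the solvable radical of $\GG^\circ$, and $\pi\colon\GG^\circ \to G := \GG^\circ/R$ the semisimple quotient. Since $R$ is solvable, a subgroup $L \leq G$ is virtually solvable iff $\pi^{-1}(L) \leq \GG^\circ$ is, and a short argument yields the key identity $\beta^{VS}_{\GL_d(k)}(\mu_i) = \beta^{VS}_G(\pi_*\mu_i)$. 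Combined with $\PP(X_1 \cdots X_n = g) \leq \PP(\pi(X_1)\cdots \pi(X_n) = \pi(g))$, this reduces the theorem to proving the bound for pushforwards in the connected semisimple $G$.

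\emph{Induction on $\dim G$.} The base case $\dim G = 0$ is trivial, since every finite group is virtually solvable (take the trivial subgroup) and so $\beta^{VS}_G(\mu_i) = 0$. For the inductive step, Corollary~\ref{cor1bis} yields
\[
\PP(X_1\cdots X_n = g) \leq \exp\Bigl(-\eps'_{d'}\sum_{i=1}^n \beta^{AG}_G(\mu_i)\Bigr),
\]
where $\beta^{AG}_G(\mu_i)$ ranges over cosets of \emph{all} proper algebraic subgroups of $G$ (not just the virtually solvable ones) and $d'$ is the matrix embedding dimension of $G$. Split the indices into ``good'' ones with $\beta^{AG}_G(\mu_i) \geq \tfrac{1}{2}\beta^{VS}_G(\mu_i)$ (for which the above bound directly delivers the desired rate) and ``bad'' ones, for which $\mu_i$ concentrates with mass $\geq 1 - \tfrac{1}{2}\beta^{VS}_G(\mu_i)$ on a coset $g_iL_i$ of a proper algebraic subgroup $L_i \subsetneq G$ that is necessarily \emph{not} virtually solvable and satisfies $\dim L_i < \dim G$. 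For each bad $i$ decompose $\mu_i = a_i\nu_i + (1-a_i)\rho_i$ with $\nu_i$ supported on $g_iL_i$ and $a_i \geq 1 - \tfrac{1}{2}\beta^{VS}_G(\mu_i)$; a direct computation shows that the translated conditional measure $g_i^{-1}\nu_i$ on $L_i$ still satisfies $\beta^{VS}_{L_i}(g_i^{-1}\nu_i) \geq \tfrac{1}{2}\beta^{VS}_G(\mu_i)$. Expanding $\mu_1 \ast \cdots \ast \mu_n$ as a weighted sum over subsets of the bad indices and applying the inductive hypothesis to the sub-walk inside each $L_i$ yields the bound; the resulting constant $c_d$ degrades by a fixed factor at each of the at most $\dim \GL_d = d^2$ inductive layers, giving $c_d > 0$ depending only on $d$.

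The principal technical obstacle is this last combining step: different bad $L_i$'s are in general distinct, and are moreover translated by the intervening good factors through conjugation, so tracking how the inductive anti-concentration inside each $L_i$ transfers back to the full walk in $G$ without compounding multiplicative losses requires delicate bookkeeping.
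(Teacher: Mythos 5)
There is a genuine gap, and you have essentially flagged it yourself: the inductive combining step does not close as described. When you expand $\mu_1*\cdots*\mu_n$ over the decompositions $\mu_i=a_i\nu_i+(1-a_i)\rho_i$ at the bad indices, the term indexed by a subset $T$ of bad indices is a convolution in which the $\nu_i$ ($i\in T$) are supported on \emph{different} cosets $g_iL_i$ of \emph{different} proper subgroups $L_i$, interleaved with the good factors $\mu_j$ and the remainder factors $\rho_j$. That product does not live inside any single lower-dimensional group $L$, so there is no inductive hypothesis to invoke: the ``sub-walk inside $L_i$'' you want to isolate simply does not exist, and conjugating by the intervening factors does not help because the intervening factors are themselves random. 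The decoupling in the proof of Theorem~\ref{LOvarieties} is of a different nature: there one induct on the dimension of the \emph{target} variety $V$ (using Bezout to drop $\dim V$ after intersecting two translates), not on the ambient group dimension, which is why it closes cleanly. Separately, your ``key identity'' $\beta^{VS}_{\GL_d(k)}(\mu_i)=\beta^{VS}_G(\pi_*\mu_i)$ needs care: $\Gamma$ need not lie in $\GG^\circ$, so $\pi$ is not defined on all of $\Gamma$; the paper handles this by first passing to a bounded-index subgroup of $\Gamma$, which you omit. Finally, the reduction from arbitrary to finitely supported $\mu_i$ is not a routine approximation step here; the paper devotes Proposition~\ref{reductioncount} (and the whole machinery around Lemmas~\ref{sampling}--\ref{maxsub}) to it, precisely because there are uncountably many cosets and $\gamma_{\calL}$ is not continuous under weak limits.

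More fundamentally, the bridge from $\beta^{AG}$ to $\beta^{VS}$ that motivates your induction is unnecessary, and this is where your route diverges from the paper's. The paper does not start from Corollary~\ref{cor1bis}; it proves Theorem~\ref{cor2bis} (the uniform $\ell^2$ gap) directly. The first assertion there says: if a finite $S\subset\Gamma\leq\GL_d(K)$ generates a \emph{non-amenable} subgroup, then $S$ is already an $\eps_d$-Kazhdan set for $\lambda_\Gamma$, with $\eps_d$ uniform. This follows from Theorem~\ref{main1bis} after passing to the semisimple quotient $\mathbb{S}$ of the connected component of the Zariski closure of $\langle S\rangle$ and to a bounded-index subgroup $\Gamma'$, and the point is that $\eps_d$ is uniform over \emph{all} semisimple targets in dimension $\leq d$, so the identity of the Zariski closure does not matter. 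The contrapositive is decisive: for a unit vector $v$, the set $B_v=\{s:\|\lambda_\Gamma(s)v-v\|<\eps_d\}$ must generate an \emph{amenable} subgroup, which for finitely generated linear groups means virtually solvable by the Tits alternative. This is strictly stronger than ``not Zariski-dense'' and gives the operator-norm bound $\|\lambda_\Gamma(\mu)\|_{\op}\leq\exp(-\beta_{\calL}(\mu)\eps_d^2/4)$ with $\calL$ the cosets of virtually solvable subgroups, directly, with no induction. Theorem~\ref{naLO} then drops out in one line for finitely supported $\mu_i$, and Proposition~\ref{reductioncount} (via Remark~\ref{solred}) handles the general case. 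The lesson is that the statement of Theorem~\ref{main1bis} carries more information than Corollary~\ref{cor1bis} records: the obstruction to the spectral gap is always a virtually solvable, not merely a proper algebraic, subgroup.
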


For example, if $X_i$ takes values $a_i,a_i^{-1},b_i,b_i^{-1}$ with probability $\frac{1}{4}$ and if each pair $a_i,b_i$ generates a non virtually solvable subgroup of $\GL_d(\CC)$, then $\beta_i \ge \frac{1}{4}$
(indeed, if a symmetric set $S$ is contained in a coset $gL$ then $\langle S^2\rangle$ is contained in $L$ and has index at most $2$ in $\langle S\rangle$). While Theorem \ref{naLO} will be derived from Theorem \ref{main1bis}, it does not require consideration of quasi-regular representations, only the regular one. Inasmuch, it can also be derived  from the uniform Tits alternative \cite{strong-tits}.

\subsection{Escape from subvarieties} We conclude by stating another consequence of Theorem \ref{LOvarieties} regarding \emph{escape from subvarieties}. This phenomenon originates from a lemma due to Eskin-Mozes-Oh \cite{eskin-mozes-oh} that has been essential in work on growth of linear groups and approximate subgroups, e.g. \cites{breuillard-gelander, breuillard-green-tao-linear}. It asserts (see Lemma \ref{escape}) that if a finitely generated subgroup of an algebraic group is Zariski-dense, then there is an upper  bound on the smallest word length of an element that lies outside a given proper algebraic subvariety, and this bound depends only on the degree and dimension of the subvariety.  There are various proofs available in the literature for this lemma. For instance, \cite{eskin-mozes-oh} uses Bezout's theorem while \cite{breuillard-green-tao-linear} uses an ultralimit argument. We provide yet another proof,  based on a very simple linearisation procedure, see Lemma \ref{escape}. This proof gives an explicit polynomial bound in the degree of the subvariety. While this lemma is needed in the proof of our main results Theorems \ref{main1bis} and \ref{LOvarieties}, it turns out that these results can be used \emph{a posteriori} to drastically improve the bound in the escape lemma by providing a logarithmic bound in the degree aspect:

\begin{cor}[Logarithmic escape]\label{logescape} Given $d \ge 2$, there is $C_d>1$ such that if $K$ is a field, $\GG$ is a connected semisimple algebraic $K$-group of dimension at most $d$, $S \subset \GG(K)$ is a finite subset generating a Zariski-dense subgroup, $V$ is a proper closed subvariety of $\GG$ of degree at most $N$, and $n \ge 1$ is an integer such that $S^n\subset V$,  then $n \le C_d \log(1+N)$.
\end{cor}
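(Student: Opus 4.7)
The plan is to apply Theorem \ref{LOvarieties} to a random walk of length $q$ (with $q$ roughly $n/m_d$) whose single-step law is the uniform distribution on a finite set $T\subset S^{m_d}$ of cardinality bounded by a constant depending only on $d$, where $m_d=m_d(d)$ is to be chosen. Writing $n=qm_d+r$ with $0\le r<m_d$ and fixing any word $w\in S^r$, the inclusion $w\cdot S^{qm_d}\subset S^n\subset V$ yields $S^{qm_d}\subset w^{-1}V=:V'$, a translate of $V$ with the same dimension and degree. Suppose we can construct $T\subset S^{m_d}$ of cardinality at most $c_d$ not contained in any coset $gL$ of a proper algebraic subgroup of $\GG$; then $\beta(\mathrm{Unif}(T))\ge 1/|T|\ge 1/c_d$. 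With $X_1,\dots,X_q$ i.i.d.\ uniform on $T$, we have $X_1\cdots X_q\in T^q\subset S^{qm_d}\subset V'$ almost surely, and Theorem \ref{LOvarieties} gives $1\le C_{V'}\,e^{-cq/c_d}$, whence $q = O_d(\log(1+N))$ and $n\le qm_d+m_d\le C_d\log(1+N)$. (When $K$ is not local, we first restrict to a finitely generated subfield of $K$ containing the entries of $S$ and the coefficients of $V$, and embed it into a local field in order to invoke Theorem \ref{LOvarieties}.)

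The construction of $T$ is greedy. The condition that $T$ be contained in no coset $gL$ of a proper algebraic subgroup is equivalent to $\langle TT^{-1}\rangle$ being Zariski-dense in $\GG$ (if $T\subset gL$ then $TT^{-1}\subset gLg^{-1}$, a proper subgroup). Starting with any $t_0\in S^{m_d}$, at step $i$, if $L_i:=\overline{\langle T_iT_i^{-1}\rangle}^{\mathrm{Zar}}$ is proper, we adjoin some $t_{i+1}\in S^{m_d}\setminus L_it_0$. Such a $t_{i+1}$ exists, for otherwise $S^{m_d}\subset L_it_0$ would force $S^{m_d}(S^{m_d})^{-1}\subset L_i$, contradicting the key claim below. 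Since any strictly ascending chain of closed subgroups of $\GG$ has length bounded by some $c_d=c_d(d)$ (by $\dim\GG\le d$ together with the structure theory), the process terminates within $c_d$ steps with $|T|\le c_d+1$ and $L_k=\GG$.

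The key claim is that $\langle S^{m_d}(S^{m_d})^{-1}\rangle$ is Zariski-dense in $\GG$ for some $m_d=m_d(d)$. The chain $H_m:=\overline{\langle S^m(S^m)^{-1}\rangle}^{\mathrm{Zar}}$ is increasing (via the identity $uv^{-1}=(us)(vs)^{-1}$) and stabilizes at some $H_\infty$ after $m_d=O_d(1)$ steps. For any $s\in S$, the identity $s(uv^{-1})s^{-1}=(su)(sv)^{-1}$ yields $sH_ms^{-1}\subset H_{m+1}$, hence $sH_\infty s^{-1}\subset H_\infty$, and equality of dimensions and component counts forces $sH_\infty s^{-1}=H_\infty$. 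Thus the normalizer of $H_\infty$ contains $\langle S\rangle$ and, by Zariski-density, equals $\GG$, so $H_\infty\triangleleft\GG$. Suppose for contradiction that $H_\infty\ne\GG$. Since $\GG$ is connected semisimple, its proper connected normal subgroups are proper products of simple factors, each of dimension $\ge 3$, so $\dim H_\infty^\circ\le d-3$. Also $SS^{-1}\subset H_\infty$ gives $S\subset H_\infty s_0$ for any $s_0\in S$, and normality of $H_\infty$ yields $\langle S\rangle\subset H_\infty\cdot\overline{\langle s_0\rangle}^{\mathrm{Zar}}$, a closed subgroup of $\GG$ of dimension $\le(d-3)+1=d-2<d$, contradicting the Zariski-density of $\langle S\rangle$. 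Hence $H_\infty=\GG$. The main obstacle in the entire argument is securing the uniform dependence of $m_d$ and $c_d$ on $d$ alone, which rests on uniform bounds on the lengths of strictly ascending chains of closed subgroups of $\GG$; once these are in hand, all the steps above go through.
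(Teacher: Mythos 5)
Your high-level plan matches the paper's: reduce to a bounded-size set whose law escapes all cosets of proper algebraic subgroups, then feed that into Theorem \ref{LOvarieties} to get exponential decay, forcing $n \ll_d \log(1+N)$. The paper does this by invoking Lemma \ref{semisimplegen} (to shrink $S$ to size $\leq 2\dim\GG - 1$) and Proposition \ref{asym} (coset escape) to obtain $k\leq C(d)$ with $\mu^{*k}$ not supported on a coset.

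However, your proposal has a genuine gap precisely at the step you yourself flag as ``the main obstacle,'' and that gap does not close. Both the greedy construction of $T$ and the stabilization of the chain $H_m = \overline{\langle S^m (S^m)^{-1}\rangle}^{\mathrm{Zar}}$ rest on the assertion that strictly ascending chains of \emph{closed} subgroups of $\GG$ have length bounded by a constant $c_d$ depending only on $d$. This is false. Finite algebraic subgroups can form arbitrarily long strictly increasing chains (e.g.\ cyclic subgroups of increasing order inside a torus of $\SL_2$), and dimension considerations only control chains of \emph{connected} subgroups. In your greedy step, adjoining $t_{i+1}\notin L_i t_0$ does make $L_{i+1}\supsetneq L_i$, but it may only add a new component at the same dimension, so the process need not terminate in $O_d(1)$ steps. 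Likewise, the chain $H_m$ can remain finite and keep growing for an unbounded number of steps. The paper's Proposition \ref{asym} confronts exactly this: it first shows, via a nontrivial argument (Jordan's theorem in characteristic zero, a Frobenius fixed-field argument in positive characteristic, and Lemma \ref{escape}), that $H_k$ must be \emph{infinite} once $k\geq B(d)$, and only then does a dimension count bound the stabilization. Your ``key claim'' is precisely Proposition \ref{asym}, and the chain-length heuristic you offer for it is not a proof. A secondary, more easily repaired issue: the estimate $\dim\bigl(H_\infty\cdot\overline{\langle s_0\rangle}\bigr)\leq (d-3)+1$ assumes $\dim\overline{\langle s_0\rangle}\leq 1$, but a single semisimple element can generate a Zariski-dense subgroup of a torus of dimension up to $\rk\GG$; the right observation there is simply that the image of the abelian group $\overline{\langle s_0\rangle}$ in the positive-dimensional semisimple quotient $\GG/H_\infty^\circ$ cannot be dense.

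Once you replace the chain-length heuristic with an honest appeal to Lemma \ref{semisimplegen} and Proposition \ref{asym}, the remainder of your argument (deconvolving $S^n$ into i.i.d.\ blocks and invoking \eqref{LOnon-var}) goes through as in the paper, and the specialization to a local field you mention is indeed the standard reduction, which the paper also performs implicitly by stating Theorem \ref{LOvarieties} over a local field.
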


\subsection{Some further remarks.}

\subsubsection{Weak containment} One may wonder to what extent the quasi-regular representations $\lambda_{\Gamma/H}$ can be controlled already by the regular representation $\lambda_\Gamma$. If  $\lambda_{\Gamma/H}$ is weakly contained in $\lambda_\Gamma$ (in the sense of Fell), then $\|\lambda_{\Gamma/H}(\mu)\|_{\op}=\|\lambda_{\Gamma}(\mu)\|_{\op}$ for every symmetric probability measure $\mu$ on $\Gamma$. One side of the equality follows from weak containment and the other side from Kesten's criterion, or in the setting of Theorem \ref{main1bis} for center-free $\GG$ from the $C^*$-simplicity of $\Gamma$ \cite{bkko} showing that weak containment implies weak equivalence. So if $\lambda_{\Gamma/H}$ is weakly contained in $\lambda_\Gamma$,  the spectral gap of the quasi-regular representation is controlled by that of the regular representation. This is the case when $H$ is an amenable subgroup of $\Gamma$, e.g. \cite[7.3.7]{zimmer}. However, this is the only case, because if $H$ is any non-amenable subgroup of $\Gamma$, then $\|\lambda_{\Gamma/H}(\mu)\|_{\op}=1>\|\lambda_{\Gamma}(\mu)\|_{\op}$ for any symmetric $\mu$ supported on a generating set of $H$. This shows that for non-amenable $H$, the spectral gap for $\lambda_{\Gamma/H}$ proven in Theorem \ref{main1bis} cannot be deduced from a spectral gap for $\lambda_{\Gamma}$ alone.

\subsubsection{The spectral gap property and invariant means} For the Koopman representations $\lambda_X$ with $(X,m)$ discrete and $m$ the counting measure, a celebrated theorem of Tarski \cites{tarski, harpe, harpe-ceccherini-silberstein-grigorichuk} asserts that $\lambda_X$ does not have the spectral gap property if and only if $X$ admits a $\Gamma$-invariant mean and if and only if $X$ does not have a paradoxical decomposition. As shown in \cite{harpe-ceccherini-silberstein-grigorichuk} these equivalences can be made quantitative in a certain sense. Our proof of Theorem \ref{main1bis} relies on a new characterization of the spectral gap property via a result, which we call the ping-pong lemma with overlaps  (Lemma \ref{ping-pong with overlaps}). It yields explicit gap estimates. While we only use the forward direction here, in a subsequent paper \cite{becker-breuillard-paradoxal} we will show that a converse holds by introducing the notion of \emph{action with few overlaps} and show that $X$ has few overlaps if and only if $\lambda_X$ has the spectral gap property. 

\subsubsection{Proving the spectral gap property without uniformity} The fact that $\lambda_{\Gamma/H}$ has the spectral gap property for a given pair $H\leq \Gamma$ as in Theorem \ref{main1bis} is well known. A quick proof can also be given along the lines of Furstenberg's proof of the Borel density theorem \cite{furstenberg}. This strategy was used by Shalom in \cite{shalom-algebra} to give a direct proof (without relying on the Tits alternative) of the non-amenability of non virtually solvable linear groups.  It goes as follows. By Chevalley's theorem one finds a projective representation $V$ of  $\Gamma$ where $H$ fixes a line. If there is no spectral gap, an invariant mean exists on $\Gamma/H$, which then becomes a $\Gamma$-invariant probability measure on $\PP(V\otimes k)$ for each choice of local field extension $k$ of $K$. Choosing $k$ for which $\Gamma$ is unbounded allows to apply Furstenberg's lemma \cite[Lemma 3.2.1]{zimmer} and contradict the irreducibility of $\Gamma$ on $V$. This method can be pushed to yield uniformity of the Kazhdan constant when the isotropy group $H$ varies. We heard from Hadari and Shalom \cite{hadari-shalom} that they have used this method to establish the spectral gap property for $\Gamma/H$ for a wide variety of pairs $H\leq \Gamma$ with uniformity in $H$.  Uniformity in $H$ can also be deduced from super-strong approximation \cite{salehi-varju}, but should rather be seen as an ingredient in the proof of super-strong approximation, see \cite{breuillard-standrews}. A proof with uniformity in $H$ was given by the second author via random matrix products  in an unpublished note \cite{breuillard-note}. However, all these methods fail to get uniformity with respect to $\Gamma$ or $S$. While it is certainly worth contemplating a non-standard analysis approach to Theorem \ref{main1bis}, attempts in that direction have failed so far. Even if they were successful, they would not provide an explicit bound on the gap as we obtain in Theorem \ref{main1bis}.

\subsubsection{A reformulation of Theorem \ref{main1bis}} \label{reform} A subgroup of $\GL_d$ is said to be strongly irreducible if it does not permute a finite family of proper vector subspaces. Given two countable linear groups $H\leq \Gamma \leq \GL_d$ we say that $H$ is \emph{irreducibly dense} in $\Gamma$ if every strongly irreducible representation of a finite index subgroup $\Gamma_0$ of $\Gamma$ remains strongly irreducible in restriction to $H\cap \Gamma_0$.  This is equivalent to requiring that $\HH^0 Rad(\GG)$ is a proper subgroup of $\GG^0$, where $\HH$ (resp. $\GG$) denotes the Zariski closure of $H$ (resp. $\Gamma$), $Rad(\GG)$ is the solvable radical and $\GG^0$ the connected component of the identity.

With this definition we have the following reformulation of Theorem \ref{main1bis}. \textit{ Let $K$ be a field and $H\leq \Gamma$ countable subgroups of $\GL_d(K)$ with $H$ not irreducibly dense in $\Gamma$. If $S\subset \Gamma$ is a finite set with $\langle S \rangle$ irreducibly dense in $\Gamma$, then $S$ is an $\eps_d$-Kazhdan set for $\lambda_{\Gamma/H}$. Here $\eps_d>0$ depends only on $d$. }

To see this from Theorem \ref{main1bis}, note first that looking at the conjugation action of $\Gamma$ on $\GG^0/Rad(\GG)$, and in view of Lemma \ref{quotient}, one may assume without loss of generality that $\GG^0$ is semisimple and $\GG \leq Aut(\GG^0)$. Next recall that $Aut(\GG^0)$ contains $Inn(\GG^0)$ as a subgroup of bounded index. Indeed there are at most $\rk \GG \leq d$ simple factors of $\GG^0$ that may be permuted and for each of them the group of outer-automorphism identifies to the symmetry group of the Dynkin diagram, hence has size at most $6$ (achieved for $D_4$). In particular $[Aut(\GG^0):Inn(\GG^0)]\leq d_0:=6^dd!$. So we may assume that $\GG^0$ has index at most $d_0$ in $\GG$. As is well-known (e.g. \cite[Lemma C.1]{breuillard-green-tao-linear}) $\Sigma:=(S\cup S^{-1})^{2d_0}\cap \GG^0$ generates $\langle S \rangle \cap \GG^0$. So if $\Sigma$ is $\eps$-Kazhdan for $\lambda_{\Gamma/H}$, then $S$ is $\frac{\eps}{2d_0}$-Kazhdan. And the restriction of $\lambda_{\Gamma/H}$ is a direct sum of at most $d_0$ quasi-regular representations of $\Gamma \cap \GG^0$ with non-Zariski-dense isotropy group. By Theorem \ref{main1bis} $\Sigma$ is $\eps_d$-Kazhdan for each of them. Hence it is $\frac{\eps_d}{\sqrt{d_0}}$-Kazhdan for $\lambda_{\Gamma/H}$ as desired.

\subsubsection{Other sufficient geometric conditions on the isotropy group} For a countable linear group $\Gamma\leq \GL_d$, one may ask for other geometric conditions on the isotropy group $H$ for $\lambda_{\Gamma/H}$ to have the spectral gap property or its uniform version. It turns out that when $H$ is \emph{algebraic} in the sense that $H=\Gamma \cap \HH$, where $\HH$ (resp. $\GG$) is the Zariski-closure of $H$ (resp. $\Gamma$), then one can give a necessary and sufficient condition for $\lambda_{\Gamma/H}$ to have the spectral gap property. This holds if and only if $\mathcal{C}(\HH)$ is a proper subgroup of $\mathcal{C}(\GG)$, where $\mathcal{C}()$ denotes the \emph{perfect core}, namely the last term in the derived series of the connected component of the identity, see \cite[3.23]{shalom-random}.  In a sequel to this paper, we will show how to push Theorem \ref{main1bis} to this more general setting and prove the uniformity of the spectral gap for such actions. This requires extending  Theorem \ref{main1bis} to perfect algebraic groups and dealing with affine representations where we only deal with projective ones in this paper (as in e.g. \cite{salehi-varju}). While our method does extend to this case, it is at the cost of a significantly more involved argument (due to the failure of \eqref{qs} in a non semisimple context), which we postpone to a forthcoming sequel. Similar difficulties arose in L. Pham's thesis \cite{pham} in the case of the affine group $\SL(2,\ZZ) \ltimes \ZZ^2$.

\subsubsection{Anti-concentration on other sets and neighborhoods} Theorems \ref{LOvarieties} and \ref{naLO} deal with anti-concentration on points and varieties. It would interesting, in the spirit of \cites{spink, fox-kwan-spink} to extend these results to semi-algebraic sets or more generally to sets definable in an o-minimal structure. The original Littlewood-Offord theorem concerned anti-concentration on intervals rather than points. Under the assumption that the support of the $\mu_i$ in Theorem \ref{cor2bis} contain generators of a \emph{discrete} non-amenable subgroup, one can easily deduce a similar anti-concentration estimate for compact sets instead of points. More challenging would be to derive anti-concentration estimates for pre-images of an interval (over $\RR)$ or a disc (over $\CC$) under a polynomial map (Theorem \ref{LOvarieties} being the case when the interval is reduced to a point) under a similar discreteness assumption. See \cite{costello-tao-vu} for a result of this type in the classical Littlewood-Offord setting. The very strong uniformity afforded by Theorem \ref{LOvarieties} allows the use of the effective nullstellensatz to prove such estimates when the interval is of very small size (super-exponential in $n$) without any discreteness assumption on the supports. See \cite{breuillard-zimmer} for this argument.

\subsubsection{One-percent versus ninety-nine percents} Corollary \ref{cor1bis} in the case $H=1$  (or Theorem \ref{cor2bis}) says that for a probability measure $\mu$ on $\Gamma$ a small spectral gap $1-\|\lambda_{\Gamma}(\mu)\|_{\op}$ implies that $\mu$ is heavily supported on a coset of a proper algebraic subgroup (even an amenable subgroup). We may call this a $99\%$ statement. The $1\%$ analogue would then be the following, which we leave as an open question. The $1\%$ problem: Does there exist a function $\delta_d(\eps)>0$ such that for every $\eps>0$ every field $K$ and every probability measure $\mu$ on $\Gamma \leq \GL_d(K)$ such that $\|\lambda_\Gamma(\mu)\|_{\op}\ge \eps$ we have $\mu(gA)\ge \delta_d(\eps)$ for some coset $gA$ of an amenable subgroup of $\Gamma$. Even the case when $\Gamma$ is a fixed free group and $\mu$ a uniform probability measure on a finite subset seems challenging.

\subsection{An overview of the proof of Theorem \ref{main1bis}.} 

One way to establish explicit spectral gaps for group actions is to exhibit an explicit paradoxical decomposition, for example by constructing a free subgroup acting freely. While we will not do this, we will use a  closely related but more versatile argument establishing a version of the ping-pong lemma that does not produce a free subgroup, but provides enough separation to guarantee a quantitative spectral gap. This will be the topic of Section \ref{pplemma}, where we prove the \emph{ping-pong with overlaps lemma}, Lemma \ref{ping-pong with overlaps}. 

Unlike the classical ping-pong lemma used to produce free subgroups where ping-pong partners $\gamma_i$ are chosen to have a contracting action on a set $X$ with disjoint attracting and repelling neighborhoods $A_i,R_i$, this result directly gives an upper bound on the spectral norm of the sum $\gamma_1+\cdots+\gamma_r$ of $r$ group elements, under the assumption that their action satisfies similar conditions as in the ping-pong lemma. The key difference is that the attracting and repelling neighbourhoods $A_i,R_i$ are now allowed to have some overlap, yet with multiplicity much smaller than $r$. 

Back to the group action of $\Gamma$, by Chevalley's theorem, given a non-Zariski-dense subgroup $H$ of $\GG$, there is a linear representation $V$ of $\GG$ without $\GG$-invariant line such that $H$ fixes a line in $V$. When $K$ has  characteristic zero, $V$ can be chosen to be absolutely irreducible. Only mild modifications are required in positive characteristic (e.g. see Lemma \ref{line}) and for simplicity, we assume char$K=0$ in this overview. Only finitely many possible $V$'s are necessary to consider, so it is enough to deal with one of them.  Starting with the finite set $S$ we shall construct an element $\gamma$ and suitable conjugates $\gamma_i:=g_i\gamma g_i^{-1}$ all expressible as  short words of bounded length in $S$, and find a suitable local field extension $k$ of $K$ for which $\gamma_1,\ldots,\gamma_r$ ``play ping-pong with overlaps'' on projective space $X=\PP(V \otimes_K k)$. 

The element $\gamma$  need not be semisimple or $k$-proximal (that is, with a unique top eigenvalue); it just needs to have one eigenvalue that is not of modulus $1$ in $k$. This flexibility is key to the success of the method and is afforded thanks to the great generality of the ping-pong with overlaps lemma. The $g_i$ are chosen so that they send the generalized eigenspaces of $\gamma$ in the most transverse possible way. Of course, subspaces of small codimension will typically intersect, and consequently the attracting and repelling neighbourhoods (which are thickenings of these generalized eigenspaces) will intersect. But  the irreducibility of the action is enough to guarantee that the overlap will have bounded multiplicity.

The fact that one can indeed find some local field extension $k$, where the $\gamma_i$ play ping-pong with overlaps, is the beef of the main argument. While some local completions  $k$ may provide the required separation and overlap control for some of the $\gamma_i$, it is not a priori clear that a single completion can be chosen to achieve the required separation and overlap control simultaneously for all $\gamma_i$. In fact, examples exist of families of finite subsets $\{S_i\}_{i \ge 1}$ of  certain real simple Lie groups such as $\SO(2n,1)$ for which, although $\langle S_i \rangle$ is (Hausdorff) dense, all words in $S_i$ of length at most $i$ remain elliptic elements (with all eigenvalues of modulus $1$), see \cite[Example 10.3]{breuillard-fujiwara}. To find a suitable $k$, we thus need to argue globally and exploit the various cancellations and compensations that occur when considering all completions together. An essential preliminary reduction is thus to first argue that, without loss of generality when proving Theorem \ref{main1bis},  the field $K$ can be assumed to be a global field (e.g., a number field in characteristic zero).

A second reduction is that $S$ can be assumed to be of bounded cardinality, as every finite set generating a Zariski-dense subgroup of $\GG$ has a subset of size at most $2\dim \GG -1$ with the same property. See Lemma \ref{semisimplegen}, whose proof uses a non-euclidean Helly-type theorem. 

Then the argument proceeds by considering all possible generalized eigenspaces of $\gamma$ and estimating the projective distance between them in each of the (finitely many) possible completions of $K$ for which $\gamma$ is not elliptic. This has to be done at the local level; that is, for each completion, concrete estimates for distances in each local field need to be worked out. 

For instance, one needs estimates for the distance from a point to the intersection of subspaces in terms of the distance to each subspace, c.f. Lemma \ref{intersec}. We discuss a natural notion of orthogonality for this purpose in \S \ref{sec:ortho} that makes sense uniformly over all local fields.  In the non-archimedian case, it is essential to  obtain sharp constants in these estimates.

The local quantities are then summed up into a height-like quantity of the form:
$$t:=\frac{1}{[K:\QQ]} \sum_{v \in V_K} n_v t_v$$
where $t_v\ge 0$ is the local quantity at hand, e.g. the sum of the logarithms of the projective distances between the relevant subspaces. Here $V_K$ is the set of all places (i.e. equivalence classes of absolute values) of $K$ and $n_v$ is the local degree $[K_v:\QQ_v]$, where $\QQ_v$ and $K_v$ are the completions of $\QQ$ and $K$ with respect to $v$. In Section \ref{heights} this weighted sum of local contributions is shown to be solely controlled by the height of $S$, namely:
\begin{equation}\label{tbnd}t \leq C_1 h(S) + C_2,\end{equation}
for parameters $C_1,C_2$ depending only on $d$, where 
$$h(S):=\frac{1}{[K:\QQ]} \sum_{v \in V_K} n_v \log \|S\|_v$$
is the height of $S$, and $\|S\|_v:=\max_{s \in S} \|s\|_v$ for $\|s\|_v$ the operator norm of $s$ in $\GL_d(K_v)$ associated with the standard Euclidean or $v$-adic norm.  For \eqref{tbnd} to hold, it is essential to properly define the projective distances between subspaces and most of Section \ref{local} is devoted to this definition and its basic properties. 

Three main ingredients are used at this point. The first is the effective Bochi inequality \cite{bochi,breuillard-joint}, which is a local result and asserts that the joint spectral radius
$$R_v(S)  : = \lim_{n \to +\infty} \|S^n\|_v^{\frac{1}{n}}$$
is almost (within a multiplicative constant close to $1$) achieved by the spectral radius (largest modulus of an eigenvalue) of a word of bounded length in $S$. See  \eqref{bochi1}. 

The second ingredient is a global result about heights on semisimple algebraic groups, the so-called \emph{Height gap theorem} proved by the second author in \cite{breuillard-annals} 
(see also \cite{hurtado-et-al} for a recent new proof). It asserts that the normalized height 
$$\widehat{h}(S):=\lim_{n\to +\infty} \frac{1}{n}h(S^n) = \frac{1}{[K:\QQ]}\sum_{v \in V_K} n_v \log R_v(S)$$
is bounded away from $0$ by some positive quantity $gap_d>0$, which depends only on $d$ (and not on $K$ nor $S$) provided $S$ generates a Zariski-dense subgroup of $\GG$.  This result can be interpreted as a kind of adelic Margulis lemma. 

The last crucial ingredient is the \emph{quasi-symmetrisation theorem} from \cite{breuillard-annals}. This asserts that the normalized height, which is always bounded above by the height $h(S)$ and is invariant under conjugation by elements in $\GG(\QQ^{al})$ can be achieved  (up to multiplicative constants) by the height of a suitable conjugate of $S$ in $\GG(\QQ^{al})$. Namely:
\begin{equation}\label{qs0}\widehat{h}(S) \leq \inf_{g \in \GG(\QQ^{\al})} h(gSg^{-1}) \leq \kappa_d \widehat{h}(S)\end{equation}
for some constant $\kappa_d$ independent of $S$. 

The last two ingredients combined with \eqref{tbnd} allow, after a suitable conjugation of $S$ in $\GG(\QQ^{al})$ to assume that 
$$t+h(S) \ll_d\widehat{h}(S).$$
Now this must imply the existence of at least one place $v$ for which $t_v +\log \|S\|_v$ is controlled by $\log R_v(S)$. Applying the effective Bochi inequality \eqref{bochi1}, we then obtain a suitable $\gamma$ in a bounded power of $S$, which has a large eigenvalue gap between the top and bottom eigenvalues at the place $v$, and for which all the subspaces associated with the $\gamma_i$ are well separated. This ensures that the conditions of the ping-pong with overlaps lemma on $\PP(V\otimes_K K_v)$ are satisfied and ends the proof of the theorem.

\subsection{Organisation of the paper.} The paper is organized as follows. In Section \ref{pplemma} we prove a very general lemma, the \emph{ping-pong with overlaps lemma}, that provides an upper bound for the operator norm of the Markov operator associated with a finite set of group elements acting on a set. Section \ref{var} is devoted to a new (quantitative) proof of the escape from subvarieties lemma, Lemma \ref{escape}, and continues with a proof that one can pass to a subset of bounded size, Lemma \ref{semisimplegen}. In Section \ref{local} we prepare the groundwork for implementing the geometric conditions needed to apply the ping-pong with overlaps lemma. This involves introducing a suitable distance function on projective space and the grassmanian and proving estimates that are uniform over  local fields. In Section \ref{heights} we recall basic facts about Diophantine heights and recall the main results of \cite{breuillard-annals} including the Height Gap theorem. In Section \ref{mainproofs} we give the proof of Theorem \ref{main1bis}. Section \ref{LOpf} is devoted to the proof of Theorems \ref{LOvarieties} and \ref{naLO} and Corollary \ref{logescape}. 

\bigskip

\noindent \emph{Conventions.} We use Vinogradov's notation: $f\ll g$ stands for $f\leq Cg$, where
$C$ is an absolute constant. More generally, we write $f\ll_{X}g$
when $C$ is allowed to depend only on $X$. Also for a positive integer $r$ we write $[r]$ for the set $\{1,\ldots,r\}$.
$K^{\al}$ denotes an algebraic closure of a field $K$.

\section{Ping-pong with overlaps}\label{pplemma}

Let $M \ge 0$. We will say that a finite subset $F$ of a group $\Gamma$ acting on a set $X$ is in $M$-\emph{ping-pong} position if there are subsets $\R_\gamma, \A_\gamma$ of $X$ for each $\gamma \in F$ such that \begin{enumerate}[(i)] \item $\gamma(X \setminus \R_\gamma) \subset \A_\gamma$ for all $\gamma \in F$, and
\item $\sum_{\gamma \in F} 1_{\R_\gamma}(x) \leq M$ and $\sum_{\gamma \in F} 1_{\A_\gamma}(x) \leq M$ for all $x \in X$.
\end{enumerate}

The following simple lemma gives a general upper bound on the norm of Markov operators associated to finite sets in ping-pong position.

\begin{lem}[ping-pong with overlaps]\label{ping-pong with overlaps} Let $\Gamma$ be a group acting on a discrete set $X$. Let $F \subset \Gamma$ be a finite subset in $M$-ping-pong position. Let $\lambda$ be the unitary representation of $\Gamma$ acting on $\ell^2(X)$, namely $(\lambda(\gamma)f)(x)=f(\gamma^{-1}x)$. Then 
$$\|\sum_{\gamma \in F} \lambda(\gamma)\|^2 \leq 4M|F|.$$
\end{lem}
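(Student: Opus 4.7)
The plan is to bound $\|Tf\|_2$ for $T=\sum_{\gamma\in F}\lambda(\gamma)$ and an arbitrary $f\in\ell^2(X)$ by decomposing each summand according to whether one is inside or outside the repelling set, and then using the two overlap conditions in two different ways.

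For each $\gamma\in F$, split $f=f\cdot 1_{\R_\gamma}+f\cdot 1_{X\setminus\R_\gamma}$ and write
\[
Tf=\sum_{\gamma\in F}\lambda(\gamma)(f\cdot 1_{\R_\gamma})+\sum_{\gamma\in F}\lambda(\gamma)(f\cdot 1_{X\setminus\R_\gamma})=:Af+Bf.
\]
The two halves will each be bounded by $\sqrt{M|F|}\,\|f\|_2$, and the triangle inequality will give $\|T\|\le 2\sqrt{M|F|}$, i.e.\ the desired inequality.

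For the first piece $Af$, I would simply apply the triangle inequality in $\ell^2(X)$, noting that $\lambda(\gamma)$ is an isometry, to get $\|Af\|_2\le\sum_\gamma\|f\cdot 1_{\R_\gamma}\|_2$. Then Cauchy--Schwarz in the $\gamma$-variable turns this into $|F|^{1/2}\bigl(\sum_\gamma\|f\cdot 1_{\R_\gamma}\|_2^2\bigr)^{1/2}$, and interchanging sums gives $\sum_\gamma\|f\cdot 1_{\R_\gamma}\|_2^2=\sum_x|f(x)|^2\sum_\gamma 1_{\R_\gamma}(x)\le M\|f\|_2^2$ by hypothesis (ii) applied to the $\R_\gamma$'s. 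Hence $\|Af\|_2\le\sqrt{M|F|}\,\|f\|_2$.

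For the second piece $Bf$, the point is that $g_\gamma:=\lambda(\gamma)(f\cdot 1_{X\setminus\R_\gamma})$ is supported on $\gamma(X\setminus\R_\gamma)\subset\A_\gamma$ by hypothesis (i). Therefore, pointwise,
\[
\Bigl|\sum_{\gamma\in F}g_\gamma(x)\Bigr|^2\le\Bigl(\sum_\gamma 1_{\A_\gamma}(x)\Bigr)\sum_\gamma|g_\gamma(x)|^2\le M\sum_\gamma|g_\gamma(x)|^2
\]
by Cauchy--Schwarz and hypothesis (ii) for the $\A_\gamma$'s. Summing over $x$ and using again that $\lambda(\gamma)$ is an isometry yields $\|Bf\|_2^2\le M\sum_\gamma\|f\cdot 1_{X\setminus\R_\gamma}\|_2^2\le M|F|\,\|f\|_2^2$. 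Combining the two estimates with the triangle inequality completes the proof.

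There is no real obstacle: the only thing to be careful about is to use the overlap bound on $\R_\gamma$ on the ``$\ell^1$-in-$\gamma$'' side (the one where isometry of $\lambda(\gamma)$ lets the support of $\R_\gamma$ resurface as an indicator in the sum over $x$) and the overlap bound on $\A_\gamma$ on the ``$\ell^2$-in-$x$'' side (where the pointwise Cauchy--Schwarz relies on the bounded multiplicity of the supports $\A_\gamma$).
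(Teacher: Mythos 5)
Your proof is correct and yields the same constant. It follows the same architecture as the paper — split $\sum_\gamma\lambda(\gamma)$ into two pieces, bound each by $\sqrt{M|F|}$ via Cauchy--Schwarz and the overlap hypothesis (ii), then add by the triangle inequality — but chooses a different decomposition. The paper projects on the left: with $P_\gamma,Q_\gamma$ the orthogonal projections onto $\ell^2(\A_\gamma),\ell^2(\R_\gamma)$, it writes $\lambda(\gamma)=P_\gamma\lambda(\gamma)+(1-P_\gamma)\lambda(\gamma)$, bounds the first piece by a pointwise Cauchy--Schwarz argument involving the $\A_\gamma$ overlap, and for the second piece uses condition (i) to replace $(1-P_\gamma)\lambda(\gamma)$ by $(1-P_\gamma)\lambda(\gamma)Q_\gamma$, then passes to absolute values and to the adjoint $\sum_\gamma Q_\gamma\lambda(\gamma)^*$ so that the same pointwise lemma applies with the $\R_\gamma$ overlap. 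You restrict on the right, writing $\lambda(\gamma)=\lambda(\gamma)Q_\gamma+\lambda(\gamma)(1-Q_\gamma)$: the first piece you bound directly via the triangle inequality followed by Cauchy--Schwarz in the summation variable $\gamma$ (which produces the $\R_\gamma$ overlap factor $M$), while the second piece — whose $\gamma$-th summand is supported in $\A_\gamma$ by (i) — you bound by the same pointwise Cauchy--Schwarz with the $\A_\gamma$ overlap. Your route avoids the paper's adjoint and absolute-value detour and so is a bit more direct; the paper's version is perhaps more symmetric in that one pointwise lemma serves both projections. Both arguments are correct and of essentially the same depth.
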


We call this lemma the \emph{ping-pong lemma with overlaps} because it differs from the classical ping-pong lemma (as in \cite{tits} or \cite{delaharpe}) inasmuch as it does not imply that the elements of $F$ generate a free subgroup, but it yields a bound for the operator norm of the operator $\sum_{\gamma \in F} \lambda(\gamma)$ which is as good (up to a multiplicative factor) as Kesten's bound for free groups. Recall that Kesten's bound \cite{kesten} gives $\|\sum_{\gamma \in F} \lambda(\gamma)\|^2= 4(|F|-1)$ if $F=\{\gamma_1^{\pm 1},\ldots,\gamma_k^{\pm 1}\}$ and the $\gamma_i$ generate a free group of rank $k$. It follows that our bound is asymptotically sharp up to the factor $M$ as $|F|$ grows. Its main advantage is that conditions $(i)$ and $(ii)$ are
more widely applicable and much easier to verify than those of the classical ping-pong lemma. 

\begin{proof}[Proof of Lemma \ref{ping-pong with overlaps}]
Let $P_{\gamma}$ and $Q_{\gamma}$ be the orthogonal projections
onto $\ell^{2}\left(A_{\gamma}\right)$ and $\ell^{2}\left(R_{\gamma}\right)$,
respectively, i.e. $P_{\gamma}f=1_{A_{\gamma}}f$ and $Q_{\gamma}f=1_{R_{\gamma}}f$.

For every family $\left\{ f_{\gamma}\in\ell^{2}\left(X\right)\right\} _{\gamma\in F}$,
we have by Cauchy--Schwarz and Assumption (ii) that $\left|\sum_{\gamma\in F}{\bf 1}_{{\bf A}_{\gamma}}\left(x\right)f_{\gamma}\left(x\right)\right|^{2}$
and $\left|\sum_{\gamma\in F}{\bf 1}_{{\bf R}_{\gamma}}\left(x\right)f_{\gamma}\left(x\right)\right|^{2}$
are bounded from above by
$M\sum_{\gamma\in F}\left|f_{\gamma}\left(x\right)\right|^{2}$
for all $x\in X$. So
\[
\|\sum_{\gamma\in F}P_{\gamma}f_{\gamma}\|^{2}\leq M\sum_{\gamma\in F}\|f_{\gamma}\|^{2}\qquad\text{and}\qquad\|\sum_{\gamma\in F}Q_{\gamma}f_{\gamma}\|^{2}\leq M\sum_{\gamma\in F}\|f_{\gamma}\|^{2}\,\,\text{.}
\]
Thus
\[
\|\sum_{\gamma\in F}P_{\gamma}\lambda\left(\gamma\right)\|^{2}\leq M\left|F\right|\qquad\text{and}\qquad
\|\sum_{\gamma\in F}Q_{\gamma}\lambda\left(\gamma\right)^{*}\|^{2}\leq M\left|F\right|\,\,\text{.}
\]

By the triangle inequality,
\[
\|\sum_{\gamma\in F}\lambda\left(\gamma\right)\|\leq\|\sum_{\gamma\in F}P_{\gamma}\lambda\left(\gamma\right)\|+\|\sum_{\gamma\in F}\left(1-P_{\gamma}\right)\lambda\left(\gamma\right)\|\,\,,
\]
and thus it remains to prove that
\[
\|\sum_{\gamma\in F}\left(1-P_{\gamma}\right)\lambda\left(\gamma\right)\|\leq\|\sum_{\gamma\in F}\lambda\left(\gamma\right)Q_{\gamma}\|
=\|\sum_{\gamma\in F}Q_{\gamma}\lambda\left(\gamma\right)^{*}\|
\]
(the equality follows since $Q_{\gamma}^*=Q_{\gamma}$).

Fix $f\in\ell^{2}\left(X\right)$, $\|f\|=1$. Assumption (i) implies
that
\[
\left(1-P_{\gamma}\right)\lambda\left(\gamma\right)f=\left(1-P_{\gamma}\right)\lambda\left(\gamma\right)Q_{\gamma}f\,\,\text{.}
\]
Furthermore,
\[
\left|\left(1-P_{\gamma}\right)\lambda\left(\gamma\right)Q_{\gamma}f\right|\left(x\right)\leq\lambda\left(\gamma\right)Q_{\gamma}\left|f\right|\left(x\right)\qquad\forall x\in X\,\,\text{.}
\]
Thus
\begin{align*}
\|\sum_{\gamma\in F}\left(1-P_{\gamma}\right)\lambda\left(\gamma\right)f\| & \leq\|\sum_{\gamma\in F}\lambda\left(\gamma\right)Q_{\gamma}\left|f\right|\|\leq\|\sum_{\gamma}\lambda\left(\gamma\right)Q_{\gamma}\|
\end{align*}
\end{proof}

Allowing $M>1$ is essential for our use of the above lemma in this paper. The parameter $M$, which controls the overlap, will be bounded in terms of the dimension of the ambient space, while the set $F$ will be allowed to be as large as required.

In the next subsection, we prove a general transversality result for linear actions. When we will apply Lemma \ref{ping-pong with overlaps} in the proof of Theorem \ref{main1bis} we will do so with the using neighborhoods of the projective subspaces from the families $\mathcal{F}^{\pm}$ constructed below.

\subsection{Weak general position}

Let $V$ be a finite dimensional vector space over a field $k$. In this subsection $k$ is an arbitrary field. A subspace of $V$ is called non-trivial if it is neither $V$ nor $\{0\}$. A family $\mathcal{F}$ of non-trivial subspaces of $V$ is said to be in \emph{weak general position} if for every subfamily $I$ 
$$\codim \bigcap_{W \in I} W \ge \min\{\dim V, |I|\}.$$

\begin{lem}\label{wgp} Given integers $d>1,r>0$ there is an integer $M=M(d,r)$ such that for every field $k$, every subset $S \subset \GL_d(k)$ with $1 \in S$ the following holds. If $\Gamma$ denotes the connected component of the Zariski-closure of the subgroup generated by $S$, and if $H^+,H^-\leq k^d$ is a  pair of non-trivial subspaces that do not contain a non-zero $\Gamma$-invariant subspace, there are $g_1,\ldots,g_r \in S^{M}$ such that each family $\mathcal{F}^+:=\{g_1 H^+,\ldots,g_r H^+\}$ and  $\mathcal{F}^-:=\{g_1 H^-,\ldots,g_r H^-\}$ is made of $r$ distinct subspaces in weak general position.
\end{lem}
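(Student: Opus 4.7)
The plan is to construct $g_1,\ldots,g_r$ iteratively, invoking the escape from subvarieties lemma (Lemma \ref{escape}) at each step. Write $\widetilde{\Gamma}$ for the Zariski closure of $\langle S \rangle$ in $\GL_d$, so that $\Gamma=\widetilde{\Gamma}^0$ and $\langle S\rangle$ is Zariski-dense in $\widetilde{\Gamma}$. Having already selected $g_1,\ldots,g_{k-1}$ so that each partial family $\{g_i H^\pm\}_{i<k}$ consists of $k-1$ distinct subspaces in weak general position, I will exhibit a proper Zariski-closed subvariety $V_k\subset\widetilde{\Gamma}$ of degree bounded only in terms of $d$ and $r$, such that any $g_k\in\widetilde{\Gamma}\setminus V_k$ extends both partial families to WGP families of $k$ distinct subspaces. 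Lemma \ref{escape} will then furnish such a $g_k$ as an element of $S^{N_k}$ with $N_k$ bounded only in terms of $d$ and $r$, and we set $M:=\max_k N_k$.

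To define $V_k$, for each $W\in\{H^+,H^-\}$ and $I\subset[k-1]$, set $U_I^W:=\bigcap_{i\in I}g_i W$. A short linear-algebra check shows that adding $g_k W$ preserves weak general position as soon as $g_k W\not\supset U_I^W$ for every $I$ with $U_I^W\neq 0$: the non-inclusion forces $U_I^W\cap g_k W\subsetneq U_I^W$, so $\codim(U_I^W\cap g_k W)\geq\codim U_I^W+1\geq|I|+1$, while if $U_I^W=0$ the intersection is automatically trivial. Distinctness reads $g_k\notin g_j\Stab(W)$. I therefore set
\[
V_k \;:=\; \bigcup_{W,\,j<k} g_j\Stab(W) \;\cup\; \bigcup_{W,\,I:\,U_I^W\neq 0}\{g\in\widetilde{\Gamma}:U_I^W\subset gW\}.
\]
Each piece is cut out by polynomial equations on $\GL_d$ of degree bounded in $d$ (using that $g\mapsto g^{-1}$ is polynomial on $\GL_d$, so the condition $g^{-1}U_I^W\subset W$ expands to polynomial equations of bounded degree), and the number of pieces is bounded in $r$, so $\deg V_k$ is bounded in terms of $d$ and $r$.

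The key step, which I expect to be the main technical hurdle, is the properness $V_k\subsetneq\widetilde{\Gamma}$; this is precisely where the hypothesis on $H^\pm$ enters decisively. If $g_j\Stab(W)=\widetilde{\Gamma}$ then $\Stab(W)=\widetilde{\Gamma}$, exhibiting $W$ as a nonzero $\Gamma$-invariant subspace contained in $W$, contradicting the hypothesis. If $\{g\in\widetilde{\Gamma}:U_I^W\subset gW\}=\widetilde{\Gamma}$ for some nonzero $U_I^W$, then the intersection $\bigcap_{g\in\widetilde{\Gamma}}gW$ contains $U_I^W$; being $\widetilde{\Gamma}$-invariant (hence $\Gamma$-invariant) and contained in $W$ (take $g=1$), this contradicts the hypothesis once more. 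Hence $V_k$ is proper, and Lemma \ref{escape} produces $g_k\in S^{N_k}\setminus V_k$ with $N_k\leq N(d,r)$. Iterating for $k=1,\ldots,r$, and using that $1\in S$ so that $S^n\subset S^M$ for $n\leq M:=N(d,r)$, completes the construction.
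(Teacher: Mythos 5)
Your proposal is correct and follows essentially the same route as the paper: an inductive construction that at each step carves out a bounded-degree subvariety encoding the obstructions to extending the weak general position, shows it is proper using the hypothesis that $H^\pm$ contain no nonzero $\Gamma$-invariant subspace (via the invariance of $\bigcap_{g\in\Gamma} gW$), and invokes the escape lemma (Lemma \ref{escape}). The only cosmetic difference is that the paper picks a single nonzero vector $v_I^\eps\in H_I^\eps$ and uses the linear condition $v_I^\eps\in g H^\eps$ (which also yields distinctness for free via the singleton $I=\{j\}$), whereas you impose the full subspace non-containment $U_I^W\not\subset g_kW$ and add the distinctness cosets $g_j\Stab(W)$ explicitly; both choices give varieties of degree bounded in $d,r$ and lead to the same conclusion.
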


\begin{proof} We proceed by induction on $r$. When $r=1$ we can set $g_1=1$. Assume that the lemma holds for $r$ and we have found $g_1,\ldots,g_r$. For each $I \subset [r]$, let $H^+_I$ be the intersection of the $g_iH^+$ for $i\in I$, and define $H^-_I$ similarly.
If $H^+_I \neq 0$, then $|I|<d$ and we may pick a non-zero $v^+_I \in H^+_I$. Similarly for $H_I^-$. Now $\cup_{I, H^+_I \neq 0}\{g \in \GL_d\mid g^{-1}v^+_I \in H^+ \}$ is an algebraic subvariety $\mathcal{V}^+$ of $\GL_d$ whose degree is bounded in terms of $r$ and $d$ only. We similarly define $\mathcal{V}^-$. But $\langle S \rangle$ is not entirely contained in $\mathcal{V}^+ \cup \mathcal{V}^-$, for otherwise for at least one $I$ and sign $\eps \in \{+,-\}$ such that $H^\eps_I \neq 0$, $\Gamma$ would be contained in $\{g \in \GL_d\mid g^{-1}v_I^\eps \in H^\eps \}$, contrary to our assumption. Hence by Lemma \ref{escape} there is $M=M(d,r)$ and $g_{r+1} \in S^M$ such that $v^{\eps}_I \notin g_{r+1}H^\eps$ for each $I$ and sign $\eps$ such that $H^\eps_I \neq 0$.
This means that $g_{r+1}H^\eps$ is distinct from each $g_iH^{\eps}$, $i\leq r$ and that $\codim  H^\eps_I \cap g_{r+1}H^\eps \ge \min\{\dim V, |I|+1\}.$ This completes the induction step.
\end{proof}

\noindent \emph{Remark.} We say that a subgroup of $\GL_d(k)$ acts \emph{strongly irreducibly} if it does not preserve a (non-empty) finite union of non-trivial subspaces of $k^d$. This is easily seen to be equivalent to the condition that the connected component of the subgroup's Zariski closure acts irreducibly. The condition on $H^+$ and $H^-$ in the lemma clearly holds if $\langle S \rangle$ acts strongly irreducibly.

\smallskip

\noindent \emph{Remark.} Lemma \ref{basicescape} in Section \ref{var} yields an upper bound of $m^d$ for the minimal length of a word outside the subvariety $\{g\in \GL_d(K), \phi(gu)=0\}$, where $u \in K^d$ and $\phi$ is a polynomial map of degree at most $m\ge 3$. We may use this bound to estimate $M(d,r)$. Indeed, looking at the induction step from $r-1$ to $r$ in the above proof, $\mathcal{V}^+\cup\mathcal{V}^-$ is contained in $\{g \in \GL_d(K), \phi((g,g)(f_0^+,f_0^-))=0\}$, where $\phi(f^+,f^-)$ is the product of the $f^{\eps}(v_I^{\eps})$ and $f_0^\eps$ a linear form with kernel $H^\eps$. Note that $\phi$  is a polynomial map on $K^{2d}$ of degree at most $2{r-1 \choose d-1}\leq 2^r$. We conclude that $M(d,r)\leq 4^{rd}$.

\section{Escape from subvarieties}\label{var}

In this section we present a new proof of the \emph{escape from subvarieties lemma} of Eskin-Mozes-Oh \cite[Proposition 3.2]{eskin-mozes-oh}. This new proof gives much improved, indeed polynomial bounds in the degree aspect. We also discuss generation of Zariski-dense subgroups in semisimple algebraic groups and how to control the size of a generating set (Lemma \ref{semisimplegen} and Proposition \ref{asym}). 

In order to state the escape lemma, we recall the notion of degree of a subvariety. 
\smallskip

\noindent{\emph{Degree of an algebraic subvariety.}} Let $K$ be an algebraically closed field. 

Given a linear algebraic $K$-group $\GG$, we view it as a closed subvariety of $\GL_{N}$ for some $N$. It is convenivent to view $\GL_N$ as a closed subvariety of some affine space, so we embed $\GL_{N}$ in $\SL_{N+1}$ in the standard block diagonal way. This way $\GG$ embeds is $\AA^{\left(N+1\right)^{2}}$. Viewing this affine space as an open set in $\PP^{\left(N+1\right)^{2}}$ allows us to consider projective closures of subvarieties of $\GG$,
and gives rise to the notion of their projective degrees. We define
the degree of an irreducible subvariety $V$ of projective space in
the standard way as the cardinality of the intersection of $V$ with
a general linear subspace of complementary dimension. In general, we define the degree of a subvariety to be the sum of the degrees of its irreducible components. 
If $V$ is a hypersurface in projective space defined by a homogeneous polynomial $f$, then $\deg(V)\leq \deg(f)$ with equality if $f$ is irreducible.

\begin{lem}[Escape lemma]\label{escape}
For all integers $d$ and $N\ge 3$ there is $C_{d,N}\leq N^{d^2}$ such that if $V$ is a closed subvariety of $\GL_d$ of degree at most $N$ and $S \subset \GL_d(K)$ is a finite set generating a subgroup not entirely contained in $V$, then there is $k\in [0,C_{d,N}]$, such that $S^k$ is not entirely contained in $V$.
\end{lem}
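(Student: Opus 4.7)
The plan is to reduce Lemma~\ref{escape} to a single-polynomial analogue via a simple linearization trick, and then to prove that analogue by a descending-chain argument in a finite-dimensional space of polynomials.

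The key auxiliary statement (the forthcoming Lemma~\ref{basicescape}) would be: if $T \subset \GL_D(K)$, $u \in K^D$, and $\phi \colon K^D \to K$ is a polynomial of degree at most $m \ge 3$ such that $\langle T \rangle$ is not contained in $W := \{g \in \GL_D : \phi(gu) = 0\}$, then $T^k \not\subset W$ for some $k \le m^D$. I would prove this by considering, inside the finite-dimensional space $P_m$ of polynomials of degree $\le m$ on $K^D$ (of dimension $\binom{D+m}{m} \le m^D$ for $m \ge 3$), the descending chain
\[
J_k := \bigl\{ P \in P_m : P(su) = 0 \text{ for every } s \in (T \cup \{1\})^{\le k}\bigr\}.
\]
This chain stabilizes at some index $k_0 \le \dim P_m \le m^D$. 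A direct check shows that the stable subspace $J := J_{k_0}$ is invariant under the action $(t \cdot P)(x) := P(tx)$ for $t \in T$, hence under the entire sub-semigroup generated by $T$. Therefore, if $\phi$ were to lie in $J$, then $\phi$ would vanish on the orbit $T^{\ge 0} u$ and hence on its Zariski closure; invoking the classical fact that any Zariski-closed sub-semigroup of an algebraic group is a subgroup, this closure contains $\langle T \rangle u$, contradicting the hypothesis $\langle T \rangle \not\subset W$. Hence $\phi \notin J$, meaning some $s \in (T \cup \{1\})^{\le k_0}$ satisfies $\phi(su) \ne 0$.

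To deduce Lemma~\ref{escape}, I would then embed $\GL_d$ as an open subvariety of $M_d(K) \cong K^{d^2}$ and consider the left regular representation $\rho \colon \GL_d \to \GL(M_d)$, $\rho(g)(x) = gx$, together with the distinguished vector $u := \mathrm{id}_d \in M_d$, so that $\rho(g)u = g$ for every $g \in \GL_d$. The hypothesis $\deg V \le N$, together with the standard fact that a projective subvariety of degree $N$ is set-theoretically cut out by polynomials of degree $\le N$, furnishes a polynomial $P$ of degree $\le N$ on $M_d$ that vanishes on $V$ but not identically on $\langle S \rangle$ (such a $P$ exists precisely because $\langle S \rangle \not\subset V$). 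Applying the auxiliary lemma with $D = d^2$, $m = N$, $T = \rho(S)$, $u = \mathrm{id}_d$, and $\phi = P$ yields $k \le N^{d^2}$ with $\rho(S)^k \not\subset \{h : P(hu) = 0\}$; translating back via $\rho(S)^k = \rho(S^k)$ and $P(\rho(g)u) = P(g)$ gives $S^k \not\subset \{g : P(g) = 0\} \supset V$, so $C_{d,N} \le N^{d^2}$ as required.

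The main obstacle is the sub-semigroup-to-group passage in the proof of the auxiliary lemma: stability of the chain $J_k$ initially only delivers invariance of $J$ under the sub-semigroup $T^{\ge 0}$, and it is the classical result that a Zariski-closed sub-semigroup of an algebraic group is automatically a group that upgrades this to invariance under the full subgroup $\langle T \rangle$. A secondary technical point is the extraction of a defining polynomial of $V$ whose degree is controlled by the projective degree $N$ in the chosen affine embedding; this is standard but depends on the embedding conventions fixed in Section~\ref{var}.
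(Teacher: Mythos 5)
Your proposal is correct and arrives at the same bound $N^{d^2}$, but it proves the auxiliary lemma by a genuinely different route. The paper's Lemma~\ref{basicescape} argues in two stages: it first treats the case of a \emph{linear} form by looking at the \emph{ascending} chain $\spn \Sigma^{\ell}u$ in $U$ itself, and observes that the stable span is automatically $\langle\Sigma\rangle$-invariant because an invertible linear map that sends a finite-dimensional subspace into itself must send it onto itself; it then bootstraps to a general polynomial $f$ by a duality trick, applying the linear case with $\Sigma^{-1}$ acting on $K_{\leq m}[U]$ together with the evaluation functional $\phi(h)=h(u)$. You instead work directly with the \emph{descending} chain $J_k\subset P_m$, which collapses the two stages into one; that chain is dual in spirit to the paper's and it stabilizes in the same number of steps, so the quantitative outcome is identical. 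The main difference worth flagging is your semigroup-to-group step: you invoke the fact that a Zariski-closed sub-semigroup of an algebraic group is a subgroup (and the phrasing should really take the closure of $T^{\ge0}$ inside $\GL_D$, rather than of the orbit $T^{\ge0}u$ inside $K^D$, for the group structure to be relevant). That works, but it is heavier than needed and imports some algebraic geometry into what the paper keeps purely linear-algebraic. The same elementary observation the paper uses applies verbatim to your $J$: each $t\in T$ acts on $P_m$ as an \emph{invertible} linear map with $t\cdot J\subseteq J$, hence $t\cdot J = J$, hence $J$ is $\langle T\rangle$-invariant, and $\phi\in J$ already forces $\phi(gu)=(g\cdot\phi)(u)=0$ for all $g\in\langle T\rangle$. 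Finally, a small point in your favour: your appeal to ``$V$ is set-theoretically cut out by polynomials of degree $\le N$'' is the correct form of the geometric input (one needs a degree-$\le N$ polynomial vanishing on $V$ but \emph{not} on $\langle S\rangle$), whereas the paper's terser phrasing ``contained in a hypersurface of degree $\le N$'' leaves that step implicit.
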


The proof is based on the following lemma. Let $M(d,m)$ be the dimension of the space $K_{\leq m}[V]$ of polynomial functions on $V$ with degree at most $m\ge3$, namely $M(d,m) = {m+d \choose d} \leq m^d+1.$

\begin{lem}\label{basicescape}
Let $\Sigma\subset\GL\left(U\right)$ be a finite
set with $1\in\Sigma$, where $U$ is a $K$-vector space of dimension
$d$. Let $u\in U$ and $f$ a polynomial function of degree at most
$m$ on $U$. Assume that $f$ does not vanish identically on the
orbit $\langle\Sigma\rangle u$ of $u$ under the action of the group $\langle\Sigma\rangle$
generated by $\Sigma$. Then for every $n\ge M\left(d,m\right)-1$,
there is $w\in\Sigma^{n}u$ with $f\left(w\right)\neq0$.
\end{lem}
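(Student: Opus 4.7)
The plan is to analyse the decreasing chain of linear subspaces
\[
J_n := \{g \in W : g(w) = 0 \text{ for all } w \in \Sigma^n u\}
\]
inside the space $W := K_{\leq m}[U]$ of polynomial functions on $U$ of degree at most $m$, which has dimension $M(d,m)$. Because $1 \in \Sigma$, one has $\Sigma^n u \subseteq \Sigma^{n+1} u$, so $J_0 \supseteq J_1 \supseteq J_2 \supseteq \cdots$. The constant polynomial $1$ does not belong to $J_0$, so $\dim J_0 \leq M(d,m)-1$; since each strict inclusion drops the dimension by at least one, the chain must stabilize at some index $n_0 \leq M(d,m)-1$.

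The heart of the argument is to show that once the chain stabilizes, the stable subspace $J_{n_0}$ is invariant under the natural linear action of $\langle\Sigma\rangle$ on $W$ defined by $T_\sigma g := g \circ \sigma$. Note that $T_\sigma$ lands in $W$ because $\sigma$ is linear, and $T_\sigma \in \GL(W)$ since $\sigma$ is invertible with $T_\sigma^{-1} = T_{\sigma^{-1}}$. For $g \in J_{n_0}$, $\sigma \in \Sigma$, and $w \in \Sigma^{n_0} u$, we have $(T_\sigma g)(w) = g(\sigma w) = 0$ because $\sigma w \in \Sigma^{n_0+1} u$ and $g \in J_{n_0} = J_{n_0+1}$. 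Hence $T_\sigma J_{n_0} \subseteq J_{n_0}$, and a dimension count (using that $T_\sigma$ is a bijection of $W$) forces $T_\sigma J_{n_0} = J_{n_0}$, so also $T_{\sigma^{-1}} J_{n_0} = J_{n_0}$. Consequently $J_{n_0}$ is stable under every $T_g$ with $g \in \langle\Sigma\rangle$.

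To conclude: for any $g \in \langle\Sigma\rangle$ and $h \in J_{n_0}$,
\[
h(gu) = (T_g h)(u) = 0,
\]
since $T_g h \in J_{n_0} \subseteq J_0$. Thus every element of $J_{n_0}$ vanishes identically on the orbit $\langle\Sigma\rangle u$. The hypothesis that $f$ does not vanish on this orbit therefore forces $f \notin J_{n_0}$, and since $J_n = J_{n_0}$ for every $n \geq n_0$, in particular for $n \geq M(d,m)-1$, there must exist some $w \in \Sigma^n u$ with $f(w) \neq 0$.

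The delicate point is the invertibility step: a priori $T_\sigma J_{n_0} \subseteq J_{n_0}$ only yields stability under the multiplicative monoid generated by $\Sigma$, and one would only conclude vanishing on the \emph{monoid} orbit $\bigcup_n \Sigma^n u$, which may be strictly smaller than the \emph{group} orbit $\langle\Sigma\rangle u$ that appears in the hypothesis. The finite-dimensionality of $W$ combined with invertibility of each $T_\sigma$ is exactly what upgrades monoid invariance to group invariance and makes the hypothesis usable.
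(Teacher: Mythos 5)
Your argument is correct. It achieves the same bound $M(d,m)-1$ and establishes the lemma cleanly: the annihilator chain $J_n$ is decreasing, starts below full dimension because $1\notin J_0$, hence has a repeated term $J_{n_0}=J_{n_0+1}$ with $n_0\le M(d,m)-1$; the equality $J_{n_0}=J_{n_0+1}$ gives $T_\sigma J_{n_0}\subseteq J_{n_0}$, invertibility of $T_\sigma$ then forces $T_\sigma J_{n_0}=J_{n_0}$, and so $J_{n_0}$ is $\langle\Sigma\rangle$-invariant, hence annihilates the full group orbit. One very small expository quibble: when you first write ``the chain must stabilize at $n_0$'' what you have at that point is only $J_{n_0}=J_{n_0+1}$; the stronger claim $J_n=J_{n_0}$ for all $n\ge n_0$, which you invoke at the end, only becomes available after the group-invariance argument (alternatively, a short separate induction using $T_\sigma J_{n_0}\subseteq J_{n_0}$). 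This is not a gap, just worth stating in the right order.

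Your route is genuinely different from the paper's, though it rests on the same two pillars (a dimension chain in the finite-dimensional space $K_{\le m}[U]$, and invertibility of the $\Sigma$-action to promote monoid invariance to group invariance). The paper first proves the special case where $f$ is a \emph{linear} form, by examining the non-decreasing chain $\spn\Sigma^\ell u$ inside $U$ itself, which stabilizes by step $d-1$ and becomes $\langle\Sigma\rangle$-invariant, hence equals $\spn\langle\Sigma\rangle u$. It then reduces the general case to the linear case via a duality trick: it applies the linear case to the evaluation functional $\phi(h)=h(u)$ on $K_{\le m}[U]$ and to the orbit of the point $f$ under the contragredient $\Sigma^{-1}$-action $g\cdot h=h\circ g^{-1}$. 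You instead run the argument directly on the dual object, the chain of annihilator subspaces $J_n=\{g\in K_{\le m}[U]:\ g|_{\Sigma^n u}\equiv 0\}$. The advantage of your version is that it is more self-contained and avoids the bootstrap from the linear case; the advantage of the paper's version is that the preliminary linear case (with the sharper bound $d-1$ in place of $M(d,m)-1$) is of independent use and makes the duality step transparent. Both are equally rigorous.
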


\begin{proof}
First we treat the case where $f$ is a linear form. Then $u\neq0$,
and it suffices to prove that $f$ is not identically zero on $\spn\Sigma^{n}u$
for some $n\leq d-1$. We have a non-decreasing function $\dim\spn\Sigma^{\ell}u$
of $\ell$, and $\dim\spn\Sigma^{0}u=1$. For some $n\leq d-1$, we
have $\dim\spn\Sigma^{n}u=\dim\spn\Sigma^{n+1}u$, and thus $\spn\Sigma^{n}u=\spn\Sigma^{n+1}u$.
So $\spn\Sigma^{n}u$ is $\Sigma\cup\Sigma^{-1}$-invariant, and thus
$\spn\Sigma^{n}u=\spn\langle\Sigma\rangle u$. The result follows
from the assumption that $f$ is not identically zero on $\langle\Sigma\rangle u$.

Now to the case of a general $f$. We have a linear action $\GL\left(U\right)\curvearrowright K_{\leq m}\left[U\right]$,
given by $g\cdot h=h\circ g^{-1}$ for $g\in\GL\left(U\right)$ and
$h\in K_{\leq m}\left[U\right]$, and a linear form $\phi\colon K_{\leq m}\left[U\right]\to K$,
$\phi\left(h\right)=h\left(u\right)$. By the hypothesis, $f\left(\sigma u\right)\neq0$
for some $\sigma\in\langle\Sigma\rangle$. Thus $\phi\left(\sigma^{-1}f\right)=f\left(\sigma u\right)\neq0$.
That is, $\phi$ is not identically zero on $\langle\Sigma\rangle f$.
Thus, by the previous paragraph (applied to $\Sigma^{-1}$), for every
$n\ge M\left(d,m\right)-1$ there is $\tau\in\Sigma^{-n}$ such that
$\phi\left(\tau f\right)\neq0$, i.e. $f\left(\tau^{-1}u\right)\neq0$.
\end{proof}

\begin{proof}[Proof of Lemma \ref{escape}]
A closed subvariety of degree at most $N$ is contained in a hypersurface of degree at most $N$ (see e.g. the proof of \cite[Theorem A.3]{breuillard-green-tao-linear}).
Lemma \ref{escape} follows immediately upon taking $U=M_d(K)$, $u=I_d$ and $\Sigma=\{1\}\cup S$ acting by left multiplication on $V$ in Lemma \ref{basicescape}.
\end{proof}

We now record a consequence of escape, which is useful when $S$ is not assumed symmetric.

\begin{prop}[coset escape]\label{asym} Let $\GG$ be a connected semisimple algebraic group over $K$, and let $S$ be a finite subset generating a Zariski-dense subgroup of $\GG$. Then there is $k \leq C$ such that the subgroup generated by  $S^kS^{-k}$ is Zariski dense. The constant $C$ depends only on $\dim \GG$.
\end{prop}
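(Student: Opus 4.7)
My plan is to analyze the ascending sequence of Zariski closures $\HH_k := \overline{\langle S^k S^{-k}\rangle}$ for $k\geq 1$. Fixing any $s_0\in S$, one has $S^kS^{-k}\subset S^{k+1}S^{-(k+1)}$ by padding any word with $s_0 s_0^{-1}$, so the chain $\HH_1\subset\HH_2\subset\cdots$ is ascending inside $\GG$. I will first show that stabilization of this chain at any $k$ forces $\HH_k=\GG$, and then bound the stabilization index in terms of $\dim\GG$.

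For the structural step, observe that conjugation by any $s\in S$ sends $S^kS^{-k}$ into $S^{k+1}S^{-(k+1)}$, as witnessed by $s(s_1\cdots s_k t_k^{-1}\cdots t_1^{-1})s^{-1}=(ss_1)s_2\cdots s_k t_k^{-1}\cdots t_1^{-1}s^{-1}$. Passing to Zariski closures gives $s\HH_k s^{-1}\subset\HH_{k+1}$, so if $\HH_k=\HH_{k+1}=:\HH$ then $\HH$ is normalized by every $s\in S$, hence by $\langle S\rangle$, hence by $\GG=\overline{\langle S\rangle}$; so $\HH$ is normal in $\GG$. Next, $SS^{-1}\subset\HH$ forces $S\subset\HH s_0$, i.e.\ $S$ is contained in a single $\HH$-coset, so the image of $\langle S\rangle$ in $\GG/\HH$ is the cyclic group generated by $s_0\HH$. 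Zariski density of $\langle S\rangle$ in $\GG$ makes this cyclic image Zariski-dense in $\GG/\HH$, and hence $\GG/\HH$ is a commutative algebraic group. But the quotient of a connected semisimple group by a closed normal subgroup is itself connected semisimple: $\HH^0$ is a product of some of the almost-simple factors of $\GG$, and $\HH/\HH^0$ is a finite central subgroup of the remaining semisimple factor. Since the only commutative connected semisimple algebraic group is trivial, we conclude $\HH=\GG$.

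To bound the stabilization index $k_0$, I will split the analysis into a dimension part and a component part. The identity components $\HH_k^0$ form an ascending chain of connected closed algebraic subgroups of $\GG$; any strict inclusion between connected subgroups forces a strict dimension jump, so $\HH_k^0$ reaches its terminal value $L$ in at most $\dim\GG$ jumps. Once $\HH_k^0=L$ is fixed, the residual information lives in the finite component quotients $\HH_k/L$ inside $N_\GG(L)/L$, and the constraint $S\subset\HH_k s_0$ together with the structural step above shows the first index where this tail chain stabilizes is precisely where $\HH_k=\GG$. The main obstacle is to make this component-group step quantitatively effective with a bound depending only on $\dim\GG$; I intend to do this by applying the escape lemma (Lemma~\ref{escape}) to the proper subvariety $\HH_k\subset\GG$ whenever $\HH_k\ne\GG$, combined with a degree bound for Zariski closures of subgroups generated by $S$-words of bounded length, which together should yield $k_0\leq C(\dim\GG)$.
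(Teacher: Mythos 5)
Your structural observation — that $\HH_{k}=\HH_{k+1}$ already forces $\HH_{k}=\GG$ — is correct, and in fact a bit cleaner than what the paper argues. The paper only uses the weaker consequence that $\dim\HH_{k}=\dim\HH_{k+1}$ makes $\HH_{k}^{0}$ normal in $\GG$; your argument via $S\subset\HH s_{0}$, the cyclic (hence commutative) image in $\GG/\HH$, and the fact that $\GG/\HH$ is again connected semisimple is a nice shortcut which the paper does not spell out.

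However, you have a genuine gap in the quantitative step, and it is exactly where the paper does all the hard work. Bounding the number of dimension jumps by $\dim\GG$ is fine, but between those jumps the finite component groups $\HH_{k}/\HH_{k}^{0}$ can a priori grow for arbitrarily many indices $k$, and nothing in your outline rules this out. Your proposed fix — escape from the proper subvariety $\HH_{k}$ — runs into a circularity: the escape lemma needs an upper bound on $\deg\HH_{k}$, and that degree is controlled by the number of irreducible components of $\HH_{k}$, which is precisely the quantity you are trying to bound. The lamplighter example the paper gives at the start of its own proof ($S=\{(\delta_{0},\sigma)\}$ in $\ZZ/2\ZZ\wr\ZZ$, where every $S^{k}S^{-k}$ generates a finite group) is a concrete warning that the finite stage is the real obstruction, not an afterthought; the argument must use that $\GG$ is semisimple in an essential way to kill it.

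The paper closes this gap by a separate, characteristic-dependent argument showing that $\HH_{k}$ must be \emph{infinite} once $k\geq B(d)$ for a bound $B(d)$ depending only on $d=\dim\GG$, and only then runs the dimension induction (normality of $\HH_{k}^{0}$, pass to $\GG/\HH_{k}^{0}$, apply the inductive hypothesis). The infinitude step is substantial: in characteristic zero it reduces to $\GG$ simple and adjoint, invokes Jordan's theorem to put a bounded-index abelian subgroup of the finite group $\HH_{k}$ into a maximal torus, and then traps $S^{i}$ inside a bounded-degree proper subvariety $X_{k}T_{k}TU$ so that the escape lemma yields a contradiction; in positive characteristic it uses a centralizer/escape argument to force triviality of $Z(\HH_{k})$, then a Frobenius fixed-point argument combined with Burnside and complete reducibility of $\HH_{k}$ to force $\langle S\rangle$ into a finite subfield subgroup — a contradiction. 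None of this is subsumed by your escape-plus-degree plan. To repair your proof you would need to reproduce (or replace) the ``finite stage is bounded'' input; simply appealing to escape from $\HH_{k}$ without controlling its component group will not close the argument.
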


We note that this improves \cite[Lemma 3.5]{desaxce-he-torus}.

\begin{proof}[Proof of Proposition \ref{asym}]Before we delve into the proof, we note that it is possible to find infinite groups with a finite generating set $S$ such that $S^kS^{-k}$ is contained in a finite subgroup for all $k$. Consider for instance the lamplighter group $\ZZ/2\ZZ \wr \ZZ=\{((u_n)_{n \in \ZZ}, \sigma^{i})\mid i\in \ZZ, u_n \in \ZZ/2\ZZ\}$, where $\sigma$ is the shift and $S=\{(\delta_{0},\sigma)\}$. 
Therefore, the proof of the lemma must exploit some special features of Zariski-dense subgroups of semisimple algebraic groups.
We shall see that the finiteness of the groups $\langle S^kS^{-k}\rangle$ is precisely the obstruction to the conclusion of the lemma, and that this obstruction does not occur when $S$ is as in the hypothesis of the lemma.

Let $H_k$ be the Zariski closure of the subgroup generated by $S^kS^{-k}$. First observe that $1 \in SS^{-1}$ and hence $S^kS^{-k} \subset S^{k+1}S^{-(k+1)}$. So $H_k \leq H_{k+1}$. Also, given $s \in S$ and $i\leq k$, $S^i \subset H_ks^i \subset  H_k\langle s \rangle$.

We will show, by induction on $d=\dim \GG$, that $H_k= \GG$ when $k$ is larger than some integer $C=C(d)\ge C(d-1)$ to be determined. 

We shall soon see that there is $B=B(d)$ depending on $d$ only such that $H_{k}$ is infinite for all $k\geq B$.
Setting $C\left(d\right)=\max\left\{ B\left(d\right),C\left(d-1\right)\right\} +d$,
there must be $k\in\left[C\left(d\right)-d,C\left(d\right)\right]$
such that $\dim H_{k}=\dim H_{k+1}$ and $H_{k}^{0}$ is of positive
dimension. Then $sH_{k}^{0}s^{-1}\subset H_{k+1}^{0}=H_{k}^{0}$ and
so $sH_{k}^{0}s^{-1}=H_{k}^{0}$ for all $s\in S$, and thus $H_{k}^{0}$
is a normal subgroup of $\GG$. By the induction hypothesis, $H_{k}$
surjects onto $\GG/H_{k}^{0}$ and thus $H_{k}=\GG$.

Now, assume by way of contradiction  that $H_{k}$ is finite. We will derive a contradiction
provided $k\geq B$, $B=B\left(d\right)$ to be chosen later. Projecting to a simple factor of $\GG$ we may assume without loss of generality that $\GG$ is simple and without center. The algebraic group $\GG$ embeds into $\GL_{n_d}$ for some $n_d$ depending only on $d$ and we may also assume that it acts irreducibly on $K^{n_d}$. 

The argument now differs according to the characteristic of the field. Suppose first that we are in characteristic zero. 
Jordan's theorem says that there is an abelian subgroup $A_k$ of $H_k$ such that $[H_k:A_k]\leq J$, where $J=J(n_d)$ is Jordan's constant. But $A_k$ is made of semisimple elements, so it is contained in a maximal torus $T_k$ of $\GG$. Fix $s\in S$, write $s=s_ss_u$ for the Jordan decomposition, and find a maximal torus $T$ containing $s_s$ and a one-dimensional unipotent subgroup $U$ containing $s_u$. 

So we obtain that for all $i \leq k$, $S^i \subset  H_k \langle s \rangle \subset X_k T_kTU$, where $|X_k| \leq J$. All maximal tori are conjugate. Hence the Zariski closures of $T_kTU$ and $X_k T_kTU$ are varieties whose degree is bounded independently of $k$, say by $N$. They also are proper subvarieties for dimension reasons (as $\dim \GG > 2\dim T +1$ unless $\GG$ has rank $1$, but in that case either $s_s$ is central or $s_u$ is trivial).  However this contradicts Lemma \ref{escape} when $k$ is large enough, say $k\ge C_{d,N}$.

Now assume that the field has positive characteristic. Let $Z_k$ be the centralizer of $H_k$ in $\GG$. If $z \in Z_k$, then $S^kS^{-k} \subset C_\GG(z)$ the centralizer of $z$ in $\GG$. Equivalently, $(S \times S)^k$ belongs to the closed subvariety $V_z := \{(a,b) \in \GG \times \GG, ab^{-1} \in C_\GG(z)\}$. This subvariety has degree bounded in terms of $\GG$ only and is proper if and only if $z \neq 1$ (since $\GG$ has no center). Lemma \ref{escape} applies and we find $C_2=C_2(d)$ such that $(S \times S)^k$ is not entirely contained in $V_z$ for some $k \leq C_2$. This means that $Z_k$ is trivial. In particular $Z_k$ is trivial for all $k \ge C_2$. Now if $H_k$ is finite and if there is a basis of $K^{n_d}$ such that the elements of $H_k$ have their matrix coefficients in the algebraic closure $\overline{\FF_p}$ of the prime field of $K$, then there is a power $F$ of the Frobenius automorphism $x \mapsto x^p$ which fixes $H_k$ pointwise. In particular, as $sH_{k-1}s^{-1} \leq H_k$ for each $s \in S$, we get that $(sxs^{-1})^F = sxs^{-1}$ for all $x \in H_{k-1}$. This implies that $s^{-1}s^F \in Z_{k-1}$, and hence $s^F=s$ provided $k>C_2$. In particular $S \leq \GL_{n_d}(K^F)$, where $K^F$ is the subfield of $K$ fixed by $F$, which is a finite field. But then $\langle S \rangle$ is finite, a contradiction. 

It remains to justify why we may find such a basis. If $H_k$ acts irreducibly on $\GL_{n_d}$, then this follows from Burnside's theorem by taking a linear basis of $M_{n_d}(K)$ formed of elements from $H_k$.  See for example \cite[\S 2.8]{tits} where it is noted that as $H_k$ is finite,  eigenvalues and traces of its elements belong to $\overline{\FF_p}$ . For the same reason, this holds if $H_k$ is completely reducible. Let us show that $H_k$ is completely reducible for some $k$ with $C_2 < k < C_2+n_d^2$. The smallest dimension of a non-zero $H_k$-invariant subspace is a non-decreasing function of $k$. Suppose that it remains constant on an interval of size at least $n_d$, say $[k,k+n_d]$. This must happen for some $k$ with $C_2<k\leq C_2+ n_d^2$. Then if $V$ is a non-zero $H_{k+n_d}$-invariant subspace of smallest dimension, it must be $H_{k+i}$-invariant and thus $s^iH_ks^{-i}$-invariant for $i \in [0,n_d]$. In particular $s^{-i}V$ is $H_k$-invariant of minimal dimension. This implies that the subspaces $V$, $s^{-1}V$, ..., $s^{-i}V$ are in direct sum until $i$ is such that $s^{-i-1}V$ lies entirely in the direct sum $W$ of the pevious $s^{-j}V$ for $j\leq i$. But then $W$ is invariant under $s^{-1}$ and under $H_k$, hence under $\GG$, since $s$ and $H_k$ together generate a Zariski-dense subgroup of $\GG$ by our assumption. As $\GG$ is irreducible, $W=K^{n_d}$ and $H_k$ is  completely reducible.
\end{proof}

In the proof of our main theorems it will be helpful to control the size of $S$ at first.  Lemma \ref{sizereduce} (size reduction) below allows to assume that $|S|\leq 2d^2-3$ from the start.

\begin{lem}[size reduction]\label{semisimplegen}  Let $\GG$ be a Zariski-connected semisimple algebraic group defined over a field $K$.  Let $S\subset \GG(K)$ be a finite subset generating a Zariski-dense subgroup. Then there is $S' \subset S$ with $|S'|\leq 2\dim \GG-1$ that already generates a Zariski-dense subgroup.
\end{lem}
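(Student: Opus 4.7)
I will take $S'\subseteq S$ to be a minimal subset with $\overline{\langle S'\rangle}=\GG$, which exists because $S$ is finite. Write $S'=\{s_1,\ldots,s_m\}$; the aim is to show $m\le 2\dim\GG-1$. For each $i\in[m]$, set $H_i:=\overline{\langle S'\setminus\{s_i\}\rangle}$; by minimality $H_i\subsetneq\GG$, so the identity component $H_i^0$ is a proper closed connected subgroup of $\GG$, and the Lie algebra $\mathfrak{h}_i:=\operatorname{Lie}(H_i^0)$ is a proper linear subspace of $\mathfrak{g}:=\operatorname{Lie}(\GG)$ (in particular $\dim\mathfrak{h}_i\le d-1$ where $d=\dim\GG$).

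The key structure is the incidence pattern forced by minimality: $s_j\in H_i$ for every $j\ne i$ (so that $\operatorname{Ad}(s_j)$ preserves $\mathfrak{h}_i$), whereas $s_i\notin H_i$. Together with the Zariski-density of $\langle S'\rangle$ and the semisimplicity of $\GG$ (which forbids any $\operatorname{Ad}(\GG)$-invariant proper subspace of $\mathfrak{g}$), this places the family of projective subspaces $\PP(\mathfrak{h}_1),\ldots,\PP(\mathfrak{h}_m)\subset\PP(\mathfrak{g})\simeq\PP^{d-1}$ into a configuration which is ``minimally covering'' in the sense that no subfamily enjoys the same joint incidence pattern with the $s_i$'s.

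To extract the bound $m\le 2d-1$ from this configuration I will invoke a non-Euclidean Helly-type theorem, applied in the projective space $\PP^{d-1}$, whose Helly number is precisely $2(d-1)+1=2d-1$. This matches the bound asked for by the lemma exactly, which is a strong indication that this is the correct framework. Concretely, one translates the incidence structure $s_j\in H_i$ (for $j\ne i$) and $s_i\notin H_i$ into a Radon/Helly-type hypothesis on the family $\{\PP(\mathfrak{h}_i)\}$, and concludes $m\le 2d-1$.

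The main obstacle I expect is this last step: extracting precisely the right ``covering'' or ``transversality'' hypothesis on $\{\PP(\mathfrak{h}_i)\}$ so that the projective Helly theorem delivers the sharp constant $2d-1$ rather than a weaker bound such as $d+1$ or $2d$. A secondary difficulty is positive characteristic, where the correspondence between closed connected subgroups of $\GG$ and Lie subalgebras of $\mathfrak{g}$ may fail; I would bypass this by replacing $\mathfrak{h}_i$ by the tangent space $T_1 H_i^0\subseteq T_1\GG$ regarded purely as a linear subspace, which retains all the dimension/incidence data needed for the Helly argument without requiring the Lie-theoretic bijection.
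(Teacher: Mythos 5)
Your approach diverges from the paper's in both framework and mechanism, and the step you flag as your ``main obstacle'' is in fact the entire argument, with no candidate tool to discharge it. The paper does use a Helly-type theorem, but not the one you invoke. In Lemma~\ref{infinite}, after embedding $K$ in a local field $k$ so that $\langle S\rangle$ is unbounded in $\GL_d(k)$, one considers the fixed-point sets of torsion elements acting on the Bruhat--Tits building of $\PGL_d(k)$; since this building is a $(d-1)$-dimensional contractible simplicial complex, it is $d$-Helly, which yields a subset $S'\subseteq S$ with $|S'|\le d\le\dim\GG$ generating an \emph{infinite} subgroup---not yet a Zariski-dense one. The remaining $\dim\GG-1$ in the final budget comes from a separate, elementary increment stage: after reducing to $\GG$ simple (a reduction your sketch omits), the normalizer of the positive-dimensional connected component $H$ of $\overline{\langle S'\rangle}$ is a proper subgroup, so some $s\in S$ lies outside it and adjoining $s$ strictly increases $\dim H$. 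Iterating at most $\dim\GG-1$ times reaches $\GG$, giving $\dim\GG + (\dim\GG-1)=2\dim\GG-1$. Thus the factor $2$ is the sum of two independent bounds, not a single Helly number.

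By contrast you posit a ``non-Euclidean Helly-type theorem'' for $\PP^{d-1}$ with Helly number $2d-1$, without identifying such a theorem or the hypothesis it would need on the family $\{\PP(\mathfrak{h}_i)\}$. There is no off-the-shelf Helly theorem for families of linear subspaces of projective space that produces this constant; the numerical agreement with $2\dim\GG-1$ is a coincidence, because in the paper that number is $d+(d-1)$. The incidence pattern you correctly isolate ($s_j\in H_i$ iff $j\ne i$, hence $\mathfrak{h}_i$ is $\Ad(s_j)$-stable for $j\ne i$) is real, but it does not by itself feed into any Helly machinery. Finally, note that your strategy would bound the size of an \emph{arbitrary} minimal Zariski-dense generating subset of $S$, a strictly stronger statement than the lemma; the paper's argument is constructive and produces a specific small subset, making no claim about minimality, and you would need to justify the stronger assertion before the projective Helly idea could even be a candidate.
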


\begin{proof} Without loss of generality, we may assume that $\GG$ is simple, for if $\GG=G_1\ldots G_k$ are the simple factors of $\GG$ and one finds $S_i \subset S$ of size at most $\dim G_i$ whose projection to $G_i$ generates a Zariski-dense subgroup, then the union of the $S_i$ has the appropriate size and generates a Zariski-dense subgroup of $\GG$. Note that $\GG$ embeds in $\SL_d$ for some $d\leq \dim \GG$, for example via the adjoint representation. By Lemma \ref{infinite} below we may thus find a subset $S'\subset S$ with $|S'|\leq \dim \GG$ that generates an infinite subgroup. The connected component $H$ of its Zariski closure is positive dimensional and unless $H=\GG$ already, the normalizer of $H$ is a proper closed subgroup of $\GG$, because $\GG$ is simple. Therefore some $s \in S$ lies outside this normalizer. This means that the Zariski closure of $\langle H,s\rangle$ has dimension strictly higher than that of $H$. We may continue this way at most $\dim \GG- \dim H$ times until we reach all of $\GG$. This adds at most $\dim \GG - 1$ elements to $S'$. The result follows. 
\end{proof}

\begin{lem}\label{infinite} Let $K$ a field. Suppose $S\subset \GL_d(K)$ is a finite set that generates an infinite subgroup. Then there is $S'\subset S$ with $|S'|\leq d$, that generates an infinite subgroup.
\end{lem}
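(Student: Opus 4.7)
The plan is to induct on $d$. For the base case $d=1$, the group $\GL_1(K)=K^{\times}$ is abelian, and any finitely generated torsion subgroup of $K^{\times}$ is contained in the roots of unity of some fixed order (the exponent) and is therefore finite; so if $\langle S\rangle$ is infinite, some $s\in S$ must have infinite order and $S'=\{s\}$ works.

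For the inductive step, I would first select a vector $v_{0}\in K^{d}$ whose $\langle S\rangle$-orbit is infinite. Such a $v_{0}$ exists: otherwise the orbits of a basis $e_{1},\ldots,e_{d}$ would all be finite, the union $\bigcup_{i}\langle S\rangle e_{i}$ would be a finite $\langle S\rangle$-invariant spanning subset of $K^{d}$, and $\langle S\rangle$ would embed into the symmetric group on this set (since its action on $K^{d}$ is determined by its action on any spanning subset), forcing $\langle S\rangle$ to be finite. Let $W=\spn_{K}\langle S\rangle v_{0}$ be the smallest $\langle S\rangle$-invariant subspace containing $v_{0}$, with $k=\dim W\leq d$. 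The restriction $\langle S\rangle\to\GL(W)$ has infinite image because its kernel fixes $v_{0}$, and the image therefore surjects onto the infinite orbit $\langle S\rangle v_{0}\subset W$. If $k<d$, the inductive hypothesis applied to the finite subset $\{s|_{W}:s\in S\}\subset\GL(W)$ produces $S'\subset S$ with $|S'|\leq k\leq d-1$ and $\langle S'\rangle|_{W}$ infinite, whence $\langle S'\rangle\leq\GL_{d}(K)$ is itself infinite.

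The critical remaining case is $k=d$, i.e.\ where $\langle S\rangle v_{0}$ spans $K^{d}$. Here I would pick $s_{1},\ldots,s_{d-1}\in S$ greedily so that the vectors $v_{0},\,s_{1}v_{0},\,s_{1}s_{2}v_{0},\ldots,s_{1}\cdots s_{d-1}v_{0}$ are linearly independent, hence a basis: at each intermediate step the chosen vectors span a proper subspace, and since the full $\langle S\rangle$-orbit of $v_{0}$ is not contained in any proper subspace (as $W=K^{d}$), some $s\in S$ sends the most recent vector outside that subspace. Setting $T=\{s_{1},\ldots,s_{d-1}\}$, if $\langle T\rangle$ is already infinite I take $S'=T$ and finish with $|S'|\leq d-1$.

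The main obstacle is the remaining sub-case, where $\langle T\rangle$ is finite but $\langle T\rangle v_{0}$ is a finite spanning subset of $K^{d}$; here one must exhibit a further element $s_{d}\in S$ so that $\langle T,s_{d}\rangle$ becomes infinite, giving $|S'|\leq d$. I would proceed by contradiction, assuming $\langle T,s\rangle$ is finite for every $s\in S$. Then $U_{0}:=\bigcup_{s\in S}\langle T,s\rangle v_{0}$ is a \emph{finite} subset of $K^{d}$ which is $T$-invariant (because each $\langle T,s\rangle v_{0}$ is $T$-invariant) and contains $Sv_{0}$; iterating by alternately applying $S$ and taking $T$-closure, and crucially using that the finitely many finite groups $\langle T,s\rangle$ have bounded order and that $T$ acts by permutations on each $\langle T,s\rangle$-orbit, one should be able to argue that the process stabilizes at a finite $\langle S\rangle$-invariant subset of $K^{d}$ containing $v_{0}$, contradicting the infinitude of $\langle S\rangle v_{0}$. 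Establishing that the iteration genuinely stabilizes (rather than producing an unbounded ascending chain of finite sets whose union is the full infinite orbit) is the delicate step, and is where I expect the bulk of the technical work to concentrate.
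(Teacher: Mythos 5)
Your approach diverges substantially from the paper's, which embeds $K$ in a local field $k$ so that $\langle S\rangle$ is unbounded, reduces to the case where elements of $S$ are torsion, and then invokes the fact that the Bruhat--Tits building of $\PGL_d(k)$ is a $(d-1)$-dimensional contractible complex and hence $d$-Helly: if every $d$-subset of $S$ had a common fixed point in the building, so would all of $S$, contradicting unboundedness. Your proof, by contrast, runs a linear-algebraic induction on $d$ and orbit spans.

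There is, however, a fatal gap precisely where you flag the ``delicate step.'' Your final sub-case rests on the implicit claim: \emph{if $T\subset S$ with $\langle T\rangle$ finite and $\langle T\rangle v_{0}$ spanning $K^{d}$, and $\langle T,s\rangle$ is finite for every $s\in S$, then $\langle S\rangle$ is finite.} This claim is false. Take $d=2$, $K=\QQ(\lambda)$ with $\lambda$ not a root of unity, $v_{0}=(1,1)^{T}$, and
\[
a=\begin{pmatrix}1&0\\0&-1\end{pmatrix},\qquad
r=\begin{pmatrix}0&1\\1&0\end{pmatrix},\qquad
r'=\begin{pmatrix}0&\lambda\\ \lambda^{-1}&0\end{pmatrix},\qquad S=\{a,r,r'\}.
\]
Then $T=\{a\}$ satisfies all your hypotheses: $\langle a\rangle$ has order $2$; $\langle a\rangle v_{0}=\{(1,1)^{T},(1,-1)^{T}\}$ spans $K^{2}$; $v_{0},av_{0}$ are independent, so the greedy step may legitimately select $s_{1}=a$; and both $\langle a,r\rangle$ and $\langle a,r'\rangle$ are dihedral of order $8$ (one checks $ara^{-1}=-r=r^{-1}$, $(ar)^{2}=-I$, and likewise for $r'$). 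Yet $rr'=\mathrm{diag}(\lambda^{-1},\lambda)$ has infinite order, so $\langle S\rangle$ is infinite. Thus the iteration you hope to stabilize genuinely produces an unbounded ascending chain, and no contradiction is available. The lemma is of course still true here because $\{r,r'\}$ works, but your algorithm commits to the wrong set $T$ and then cannot recover by adding a single element; one may have to discard $T$ altogether.

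A secondary issue, easier to repair, is the greedy step itself: you require some $s\in S$ to send $s_{1}\cdots s_{i}v_{0}$ outside the span $U_{i}$ of the vectors chosen so far, but knowing that $\langle S\rangle v_{0}$ is not contained in any proper subspace does not guarantee that a \emph{single} application of $S$ escapes $U_{i}$ (e.g. $Sv_{0}$ may lie in a proper subspace even when $\langle S\rangle v_{0}$ spans). This particular defect can be fixed by growing $T$ so that $\spn\langle T\rangle v_{0}$ increases strictly at each step, using the observation that a proper $\langle T\rangle$-invariant subspace containing $v_{0}$ cannot be preserved by all of $S$; but the counterexample above shows the resulting $T$ may still be ``stuck,'' so the repair does not save the argument. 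The essential combinatorial content of the lemma---that local finiteness of all $d$-subsets forces global finiteness---is exactly what the Helly property of the Bruhat--Tits building captures, and is not reached by this induction.
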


\begin{proof} We may assume that $K$ is finitely generated over its prime field and embed $K$ in a local field $k$ in such a way $\langle S \rangle$ becomes unbounded in $\GL_d(k)$. Without loss of generality we may assume that every element of $S$ is torsion, so that $\det(s)$ is a root of unity for each $s\in S$. Then the image $G$ of $\langle S \rangle$ in $\PGL_d(k)$ must also be unbounded. Discarding as many elements of $S$ as necessary, we may further assume that $\langle S \setminus\{s\}\rangle$ is finite for each $s \in S$. In this situation we now need to show that $|S|\leq d$. By way of contradiction imagine that $|S|>d$. Then every subset of $d$ elements in $S$ would generate a finite subgroup and therefore would have a common fixed point in the Bruhat-Tits building $X_d$ of $\PGL_d(k)$. This means that the family of convex sets $Fix(s)$, $s\in S$, of the fixed point sets of each $s$ in $X_d$ has the property that any $d$ of them intersect non-trivially. But $X_d$ is a $d-1$-dimensional contractible simplicial complex and as such is $d$-Helly \cite{farb, debrunner} and so this would force the $Fix(s)$, $s \in S$ to have a non-empty intersection, contradicting the unboundedness of $G$. 
\end{proof}

In fact, a variant of the argument used in Lemma \ref{semisimplegen} shows the following:

\begin{lem}\label{sizereduce}Let $K$ be an algebraically closed field and $d \ge 2$. Suppose $S$ is a finite subset of $\SL_d(K)$ generating a subgroup that acts strongly irreducibly on $K^d$. Then there is $S'\subset S$ of size at most $2d^2-3$ that generates a subgroup that already acts strongly irreducibly.
\end{lem}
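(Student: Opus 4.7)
The plan is to adapt the proof of Lemma~\ref{semisimplegen} by induction on $d$, handling the one place where that argument breaks down (when the connected component of $\overline{\langle S'\rangle}$ is normal in the ambient Zariski closure but still reducible on $K^d$) via Clifford's theorem.

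Let $H=\overline{\langle S\rangle}$, with $H^0$ acting irreducibly on $K^d$. First apply Lemma~\ref{infinite} to obtain $S_1\subseteq S$ with $|S_1|\le d$ such that $\langle S_1\rangle$ is infinite; since an infinite subgroup of an algebraic group over an algebraically closed field has Zariski closure of positive dimension (a closed subgroup with only finitely many cosets of its identity component would be finite), $L_1:=\overline{\langle S_1\rangle}^{0}$ has positive dimension. Set $S':=S_1$ and $L:=L_1$, and iterate: if $L$ acts irreducibly on $K^d$ we are done; otherwise, if some $s\in S$ fails to normalize $L$, add it to $S'$, for then $\overline{\langle S',s\rangle}^{0}\supseteq L\cdot sLs^{-1}\supsetneq L$ and so $\dim L$ strictly increases. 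The remaining case is when $L$ is normal in $H$.

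In that case, Clifford's theorem applied to the irreducible $H^0$-module $K^d$ and its normal subgroup $L$ decomposes $K^d\cong V\otimes K^m$ as $L$-module, where $V$ is an irreducible faithful $L$-representation. We have $m\ge 2$ (else $L$ already acts irreducibly on $K^d$), and a faithful connected positive-dimensional subgroup of $\SL_d$ cannot act purely by scalars, so $\dim V\ge 2$ and hence $m\le d/2<d$. The centralizer of $L$ in $\GL(K^d)$ is $\mathrm{Id}_V\otimes\GL_m$, and conjugation gives a natural morphism $H/L\to\PGL_m$; since the $H^0$-invariant subspaces of $V\otimes K^m$ correspond to the $(H^0/L)$-invariant subspaces of $K^m$, the image of $H^0$ acts irreducibly on $K^m$. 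Thus the image of $\pi(\langle S\rangle)$ in $\PGL_m$, lifted arbitrarily to $\SL_m(K)$ (using that $K$ is algebraically closed to extract $m$-th roots of determinants), acts strongly irreducibly on $K^m$, so by the inductive hypothesis applied in dimension $m$ we obtain a subset $\bar S'$ of size at most $2m^2-3$ whose preimage acts strongly irreducibly. Choose a set-theoretic lift $T\subseteq S$ of $\bar S'$ with $|T|=|\bar S'|$; the tensor factorization $K^d\cong V\otimes K^m$ then gives that $\overline{\langle S'\cup T\rangle}^{0}$ contains $L$ (irreducible on $V$) and projects onto a subgroup of $H/L$ whose identity component is irreducible on $K^m$, and it therefore acts irreducibly on $V\otimes K^m=K^d$.

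For the count: $|S_1|\le d$, the easy normalizer iterations number at most $\dim H^0-\dim L_1\le d^2-2$ (each strictly increasing $\dim L$), and the single Clifford step contributes at most $2m^2-3\le d^2/2-3$ further elements; summing gives $|S'|\le d+(d^2-2)+(d^2/2-3)\le 2d^2-3$ for every $d\ge 2$. In dimensions $d\in\{2,3\}$ the Clifford step cannot occur since $m\ge 2$ combined with $m\le d/2$ is impossible there, and the first two terms $d+(d^2-2)$ already give the required bound. The main obstacle is precisely this Clifford reduction: the single-element normalizer trick of Lemma~\ref{semisimplegen} genuinely fails as soon as the growing $L$ becomes a proper reducible normal subgroup of $H$, forcing one to pass to the strictly smaller-dimensional representation $K^m$ in order to invoke induction.
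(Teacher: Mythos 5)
Your proof is correct, but it follows a genuinely different route from the paper's. Both arguments start from Lemma~\ref{infinite} to get an infinite subgroup on at most $d$ generators, and both then iteratively add elements of $S$ that fail to normalize the connected component $L$ of the current Zariski closure, until $L$ is normal in $H=\overline{\langle S\rangle}$. Where you diverge is in what you do with the stuck position. The paper exploits the semisimple structure of $H^0$: a connected normal $L\trianglelefteq H$ inside the semisimple $H^0$ is a product of simple factors, the complementary factors give a subgroup $H_2$, and $H/L$ is realized as a central quotient of $F H_2$ inside $\PGL_d$; one then reapplies Lemma~\ref{infinite} inside $\PGL_d$ and iterates this up to $d-1$ times (once per simple factor). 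You instead invoke Clifford's theorem: with $L\trianglelefteq H^0$ and $K^d$ irreducible and $L$-isotypic (the latter because $H^0$ is connected), one gets the tensor factorization $K^d\cong V\otimes K^m$ with $L$ irreducible on $V$, the residual projective action of $H/L$ landing in $\PGL_m$ with $2\le m\le d/2$, and invariant subspaces of $K^d$ under any group between $L$ and $H^0$ corresponding exactly to $\beta$-invariant subspaces of $K^m$. This reduces the problem to dimension $m<d$ where induction applies in a single step, rather than iterating across simple factors. Your route avoids the decomposition of $H^0$ into simple factors entirely and replaces the paper's Helly-style bookkeeping over factors by one dimension-halving step; the small price is some care in verifying that the map $\bar\beta\colon H\to\PGL_m$ is a morphism of algebraic groups and that lifts to $\SL_m$ over the algebraically closed field behave as needed, both of which you handle correctly. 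The resulting bound $d+(d^2-2)+(d^2/2-3)\le 2d^2-3$ matches the paper's.

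Two tiny cosmetic points, neither a gap: $L\cdot sLs^{-1}$ is not literally a group, so better to write the (connected) subgroup generated by $L$ and $sLs^{-1}$; and the faithfulness of $L$ on $V$ in the Clifford step deserves a one-line justification (it is immediate from $L\le\SL_d$ acting as $\rho_V\otimes 1$).
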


\begin{proof}Let $\GG$ be the Zariski-closure of $G:=\langle S \rangle$. To say that $G\leq \SL_d$ is strongly irreducible is equivalent to say that $\GG^0$ is semisimple and acts irreducibly on $V=K^d$. As is well-known (e.g. \cite{wehrfritz}) there is a finite subgroup $F\leq \GG$ such that $\GG=F\GG^0$. By Lemma \ref{infinite} we find $S'\subset S$ of size at most $d$ that generates an infinite subgroup.  Proceeding as before, we may add elements  $s_1,\ldots, s_n$ of $S$ to $S'$ one by one until the connected component $H$ of the Zariski-closure of $\langle S', s_1,\ldots,s_n \rangle$ is normal in $\GG$. This happens first with some $n\leq \dim H$. This means $H$ is a product of some simple factors of $\GG^0$ and that $\GG$ permutes those simple factors. It must then permute the remaining factors as well. They thus form a normal subgroup $H_2\leq \GG$. Note then that $\GG=FH_2H$ and $\GG/H$ is a central quotient of $FH_2$. In particular $\GG/H$ embeds in $\PGL_d(V)$. Note that Lemma \ref{infinite} also holds for subsets of $\PGL_d(K)$ with the same argument. This means that unless $H=\GG$, we may find $d$ elements in $S$ that generate an infinite group modulo $H$. We may then repeat this process as many times as needed until we reach $H=\GG^0$. The total number of elements needed will be at most $dk+\dim \GG$, where $k$ is the number of times we had to repeat the procedure, which is at most the number of simple factors of $\GG$, that is $k<d$ (comparing ranks). The requested bound follows. Further note that the Zariski-closure of $\langle S' \rangle$ has $\GG^0$ as connected component of the identity, the same as the original $\langle S \rangle$.
\end{proof}

\section{Local considerations}\label{local}
In this section, we gather some preliminary considerations regarding projective transformations and their dynamics on projective space over a local field. 

\subsection{Orthogonality over local fields}\label{sec:ortho}

Let $k$ be a local field. We endow $k^{d}$ with its standard norm,
i.e. the Euclidean or Hermitian norm when $k=\RR$ or $k=\CC$, respectively,
and the supremum norm $\|x\|=\max_{i}\left|x_{i}\right|$ when $k$
is non-archimedean. In the latter case, we let $\calO$ be the ring
of integers, $\frakm$ its maximal ideal, and we set $V_{\calO}=V\cap\calO^{d}$
for a vector subspace $V\leq k^{d}$.

For a non-archimedean local field $k$, we have a bijection
\[
\left\{ \text{ linear subspaces of \ensuremath{k^{d}} }\right\} \leftrightarrow\left\{ \text{ direct summands of \ensuremath{\calO^{d}} }\right\} 
\]
given by $W\mapsto W_{\calO}$ and $M\otimes_{\calO}k\mapsfrom M$
(this follows from the structure theorem for finitely generated modules
over a PID). We say that subspaces $U,W$ of $k^{d}$ are \emph{orthogonal}
if $U\cap W=\left\{ 0\right\} $ (equivalently, $U_{\calO}\cap W_{\calO}=\left\{ 0\right\} $)
and one (hence all) of the following conditions holds:
\begin{enumerate}
\item $U_{\calO}\oplus W_{\calO}=\left(U+W\right)_{\calO}$.
\item For all $u\in U$ and $w\in W$, if $u+w\in\calO^{d}$ then $u,w\in\calO^{d}$.
\item $U_{\calO}\oplus W_{\calO}$ is a direct summand of $\calO^{d}$.
\end{enumerate}
Note that if $U$ and $W$ are orthogonal then $U'$ and $W'$ are
orthogonal for all subspaces $U'\subset U$ and $W'\subset W$. For
a subspace $U$ of $k^{d}$, the natural map $U_{\calO}/\frakm\to\left(U+W\right)_{\calO}/\frakm$
is injective, and an application of Nakayama's lemma shows that $\rank_{\calO}U_{\calO}=\rank_{\calO/\frakm}U_{\calO}/\frakm$,
and that subspaces $U$ and $W$ of $k^{d}$ are orthogonal if and
only if $\left(U+W\right)_{\calO}/\frakm=U_{\calO}/\frakm\oplus W_{\calO}/\frakm$.

More abstractly, a pair $\left(V,L\right)$, where $V$ is a finite-dimensional
$k$-vector space and $L\subset V$ is an $\calO$-submodule isomorphic
to $\calO^{\dim_{k}V}$, gives rise to a notion of orthogonality between
subspaces of $V$ (as above). If $U$ is a linear subspace of $V$,
then $U\cap L\cong\calO^{\dim_{k}U}$ as $\calO$-modules, and subspaces
$A,B$ of $U$ are orthogonal in $\left(U,U\cap L\right)$ if and
only if they are orthogonal in $\left(V,L\right)$. 
\begin{rem}
\label{rem:orthogonal-complement}Note that every subspace $U$ of
$k^{d}$ has an orthogonal complement $W$ in $k^{d}$ (since $U_{\calO}$
has a direct complement $M$ in $\calO^{d}$, and $W=M\otimes_{\calO}k$
satisfies $W_{\calO}=M$). Thus given subspaces $U\subset V$ of $k^{d}$, there is an orthogonal
complement $W$ for $U$ in $V$, and the decomposition $V=U\oplus W$
is orthogonal both in $\left(V,V\cap\calO^{d}\right)$ and in $\left(k^{d},\calO^{d}\right)$.
\end{rem}

We say that vectors $u,w\in k^{d}$ are orthogonal if they are contained
in orthogonal subspaces of $k^{d}$. Equivalently, $u,w$ are orthogonal
if $\spn_{k}u$ is orthogonal to $\spn_{k}w$.

For a local field $k$ and orthogonal $u,w\in k^{d}$, if $k$ is archimedean we have
\[
\|u+v\|=\left(\|u\|^{2}+\|v\|^{2}\right)^{1/2}
\]
while if $k$ is non-archimedean, we have
\[
\|u+v\|=\max\left\{ \|u\|,\|v\|\right\} 
\]
Indeed, $\GL_{d}\calO$ preserves the
sup-norm on $k^{d}$, and there are nonzero scalars $\alpha,\beta\in k$, satisfying
$\left|\alpha\right|=\|u\|$ and $\left|\beta\right|=\|v\|$, such
that $\calO^{d}$ has a basis containing $\left\{ \alpha^{-1}u,\beta^{-1}v\right\} $.

The norm on $k^d$ extends naturally to wedge powers $\Lambda^n k^d$ using the standard basis. If $(e_1,\ldots,e_d)$ is the standard basis of $k^d$,  the standard basis of $\Lambda^n k^d$ is given by the collection of all  $$e_I:=e_{i_1}\wedge \ldots \wedge e_{i_n},$$ where  $I=\{i_1<\ldots<i_n\} \subset [1,d]$ and the norm on $\Lambda^n k^d$ is defined as before, using this basis. It is well-known that for any two pure vectors $\vv=v_1\wedge \ldots \wedge v_n$ and 
$\ww=w_1\wedge \ldots \wedge w_m$ we have
\begin{equation}\label{wineq}\|\vv \wedge \ww\|\leq \|\vv\|\|\ww\|\end{equation}
with equality if and only if the subspace $V$ (spanned by the $v_i$)  and $W$ (spanned by the $w_i$) are orthogonal. 

\begin{proof}[Proof of \eqref{wineq}] The archimedean case is classical, so we give a proof in the ultrametric case only.  If $V$ is the subspace corresponding to $\vv$ and $W$ to $\ww$, we can assume that $V \cap W$ is $0$. Up to scaling $\vv$ and $\ww$ we can assume they are tensors of unit norm and we can also replace $\vv$ by  $e_1 \wedge \ldots \wedge e_k$ where the $e_i$ form a basis of the $\mathcal{O}$-module $V_\mathcal{O}$ and same with $W$ so $\ww$ is $f_1 \wedge \ldots \wedge f_l$ for some $\mathcal{O}$-basis $f_i$ of the $\mathcal{O}$-module $W_\mathcal{O}$. Then we are left to show that the norm of the wedge of the $e_i$ and $f_j$ is at most $1$. To do this, one can proceed in stages and consider the subspace $V_{k+1}$ spanned by $V$ and $f_1$, $V_{k+2}$ the span of $V$ and $f_1$ and $f_2$, etc. In $V_{k+1}$, complement the $e_i$ basis by a vector $e_{k+1}$ so as to get an $\mathcal{O}$-basis of $(V_{k+1})_\mathcal{O}$. Then $f_1= \alpha_1 e_{k+1} + u_{k+1}$, where $u_{k+1} \in V$. One can do the same for $V_{k+2}$ and get $e_{k+2}$ so that the $e_i$ for $i\leq k+2$ form a $\mathcal{O}$-basis of $(V_{k+2})_\mathcal{O}$ and $f_2 = \alpha_2 e_{k+2} +u_{k+2}$ with $u_{k+2} \in V_{k+1}$, etc. Note that  $e_{k+1}$ is orthogonal to $V$ and $e_{k+2}$ to $V_{k+1}$, so the norm of $f_1$ (which is $1$ by assumption) is $\max\{|\alpha_1|,||u_{k+1}||\}$. In particular $|\alpha_1| \leq 1$ with equality if and only if $f_1$ is orthogonal to $V$. Similarly $|\alpha_2|\leq 1$ with equality if and only if $f_2$ is orthogonal to $V_{k+1}$, etc. At the end we obtain $\vv \wedge \ww = (\prod_{i=1}^l \alpha_i)  e_1 \wedge \ldots \wedge e_{k+l}$ and each $|\alpha_i| \leq 1$, hence the desired inequality. For the equality case, note that this would mean that all $\alpha_i$ have absolute value $1$. But by our construction of the $e_j$, this is equivalent to $(V+W)_\mathcal{O}\leq V_\mathcal{O} + W_\mathcal{O}$, namely to the orthogonality of  $V$ and $W$.
\end{proof}

\subsection{The projective distance}\label{prodi}

We will use the following classical notion of distance on the projective space $\PP(k^d)$. For $x,y \in k^d\setminus\{0\}$ we set:
$$d(x,y):=\frac{\|x \wedge y\|}{\|x\|\|y\|}.$$
We note that $d(x,y)$ depends only on the lines $kx,ky$. It is well-known (e.g. see \cite[2.8.18]{BombieriGubler2006}) that it satisfies the triangle inequality, and thus is a distance on $\PP(k^d)$. We extend this definition to pairs of non-zero subspaces $V,W \leq k^d$ by setting:
$$d(V,W):=\|\vv \wedge \ww\|.$$
where  $\vv$ (resp. $\ww$) are unit vectors denoting the wedge of a basis of $V$ (resp. $W$). We set $d(V,W)=1$ if one or both of $V$ and $W$ is zero. Note that $d(V,W) \in [0,1]$. Note further that $d(V,W)=0$ if and only if  $V\cap W \neq 0$ and that $d(V,W)=1$ if and only if $V$ and $W$ are orthogonal.
For $x\in k^d\setminus\{0\}$, we shall write $d(x,V)$ for $d(kx,V)$.
The function $d$ is a reasonable notion of distance between $V$ and $W$ when they intersect trivially. To account for the case when they do not, we set,
\begin{equation}\label{projdi}\delta(V,W)=\sup_{V'\oplus (V\cap W)=V}d(V',W).\end{equation}
We see that $\delta(V,W)>0$ for all subspaces $V,W$. Note also that $\delta(V,W)=1$ if $V \leq W$.

\begin{lem}\label{lem:projdi}
Let $V$ and $W$ be linear subspaces of $k^{d}$, and let $U$ be
an orthogonal complement to $V\cap W$ in $k^{d}$. Then
\[
\delta\left(V,W\right)=d\left(V\cap U,W\right)=d\left(V\cap U,W\cap U\right)\,\,\text{.}
\]
\end{lem}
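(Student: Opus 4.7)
The plan is to exploit the orthogonal decomposition $k^d = I \oplus U$ with $I := V \cap W$, and to show that the canonical complement $V \cap U$ of $I$ in $V$ both realizes the supremum in $\delta(V, W)$ and has the same distance to $W$ as to $W \cap U$. First I would verify that $V = I \oplus (V \cap U)$ and $W = I \oplus (W \cap U)$ are orthogonal direct-sum decompositions: for $v \in V$, the decomposition $v = i + u$ given by $k^d = I \oplus U$ satisfies $u = v-i \in V \cap U$ since $I \subset V$, and the orthogonality is inherited from $I \perp U$ (and similarly for $W$). Letting $\mathbf{i}$, $\mathbf{v}_0$, $\mathbf{w}_0$ denote unit wedges for $I$, $V \cap U$ and $W \cap U$, the equality case of \eqref{wineq} then tells me that, up to scalars of absolute value $1$, the unit wedges for $V$ and $W$ are $\vv = \mathbf{i} \wedge \mathbf{v}_0$ and $\ww = \mathbf{i} \wedge \mathbf{w}_0$.

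For the identity $d(V \cap U, W) = d(V \cap U, W \cap U)$ I would rewrite $\mathbf{v}_0 \wedge \ww = \pm (\mathbf{v}_0 \wedge \mathbf{w}_0) \wedge \mathbf{i}$. Note $(V \cap U) \cap (W \cap U) = V \cap W \cap U = I \cap U = 0$, so $\mathbf{v}_0 \wedge \mathbf{w}_0$ is a (nonzero) pure wedge for $(V \cap U) + (W \cap U)$, a subspace of $U$ and hence orthogonal to $I$. A second application of the equality case of \eqref{wineq} yields $\|(\mathbf{v}_0 \wedge \mathbf{w}_0) \wedge \mathbf{i}\| = \|\mathbf{v}_0 \wedge \mathbf{w}_0\| = d(V \cap U, W \cap U)$, as required.

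The crux is the first equality, where the plan is to prove $d(V'', W) \leq d(V \cap U, W)$ for every complement $V''$ of $I$ in $V$. Given any basis $v_1'', \ldots, v_p''$ of $V''$, decompose $v_j'' = v_j' + i_j$ uniquely with $v_j' \in V \cap U$ and $i_j \in I$ (possible because both $V''$ and $V \cap U$ are complements to $I$ in $V$). The key trick is that the factor $\mathbf{i}$ inside $\ww$ annihilates every contribution that contains any $i_j$, because $\mathbf{i}$ already realizes a top wedge on $I$; expanding the product $(v_1'+i_1) \wedge \cdots \wedge (v_p'+i_p) \wedge \mathbf{i} \wedge \mathbf{w}_0$ therefore collapses to a single surviving term, giving
\[ v_1'' \wedge \cdots \wedge v_p'' \wedge \ww = v_1' \wedge \cdots \wedge v_p' \wedge \ww. \]
It then suffices to prove $\|v_1'' \wedge \cdots \wedge v_p''\| \geq \|v_1' \wedge \cdots \wedge v_p'\|$. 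Expanding $\prod_j(v_j' + i_j)$ as a signed sum over $S \subset [p]$ of terms $v_S' \wedge i_{S^c}$ lying in $\Lambda^{|S|}(V \cap U) \otimes \Lambda^{|S^c|} I$, the main observation is that, because $V \cap U$ and $I$ are orthogonal, these subspaces are pairwise orthogonal in $\Lambda^p k^d$ as $|S|$ varies. This is Pythagoras in the archimedean case, and in the non-archimedean case it follows after choosing an $\mathcal{O}$-basis of $\mathcal{O}^d$ adapted to the orthogonal decomposition $(V \cap U) \oplus I$ (using clause (3) in the definition of orthogonality in \S\ref{sec:ortho}) and applying the ultrametric inequality. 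The $S = [p]$ summand is exactly $v_1' \wedge \cdots \wedge v_p'$, so the norm of the whole wedge dominates that of this summand; this is the one step where some care with bases is required and is the main technical point.

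Combining everything, $d(V'', W) \le d(V\cap U, W)$ for every complement $V''$, with equality when $V'' = V \cap U$ (since then all $i_j = 0$). Taking the supremum over $V''$ yields $\delta(V, W) = d(V \cap U, W)$, and together with the identity of the second paragraph this completes the proof.
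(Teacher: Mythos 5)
Your proof is correct and follows essentially the same strategy as the paper's: exploit the orthogonal decomposition $k^d = (V\cap W)\oplus U$, the induced orthogonal decomposition of $\Lambda^\bullet k^d$, and the fact that only the component of the wedge living entirely in $\Lambda^\bullet U$ survives after wedging against $\ww$. The paper phrases the key step more compactly by writing $\mathbf{v}' = \alpha\,\mathbf{vu} + \mathbf{r}$ with $\mathbf{r}$ orthogonal to $\Lambda^\bullet U$ and $\mathbf{r}\wedge\ww = 0$, so that $|\alpha|\le 1$; your basis expansion $v_j'' = v_j' + i_j$ and the inequality $\|v_1''\wedge\cdots\wedge v_p''\|\ge\|v_1'\wedge\cdots\wedge v_p'\|$ is an explicit unpacking of the same observation.
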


\begin{proof}
Let ${\bf vu}$, ${\bf vw}$, ${\bf uw}$ and ${\bf w}$ be pure unit
wedges representing $V\cap U$, $V\cap W$, $U\cap W$ and $W$, respectively.
We start with the equality on the right: Since $W=\left(V\cap W\right)\oplus\left(U\cap W\right)$
and $V\cap W$ is orthogonal to $U$, we have
\[
d\left(V\cap U,W\right)=\|{\bf vu}\wedge{\bf vw}\wedge{\bf uw}\|=\|{\bf vu}\wedge{\bf uw}\|=d\left(V\cap U,W\cap U\right)\,\,\text{.}
\]

We now prove the equality on the left. We have $V=\left(V\cap U\right)\oplus\left(V\cap W\right)$.
Take $V'$ such that $V=V'\oplus\left(V\cap W\right)$, and let ${\bf v}'$
be a pure unit wedge representing $V'$. We need to show that $d\left(V',W\right)\leq d\left(V\cap U,W\right)$.
Let $n=d-\dim V\cap W$. The orthogonal decomposition $k^{d}=U\oplus\left(V\cap W\right)$
gives rise to an orthogonal decomposition 
\[
\Lambda^{n}k^{d}=\Lambda^{n}U\oplus\underbrace{\bigoplus_{i=1}^{n}\Lambda^{n-i}U \wedge \Lambda^{i}\left(V\cap W\right)}_{\eqqcolon A}\,\,\text{.}
\]
and we have
\[
{\bf v}'=\alpha{\bf vu}+{\bf r}
\]
where $\alpha\in k$ and ${\bf r}\in A$ (while ${\bf vu}\in\Lambda^{n}U$).
In particular, ${\bf r}$ and ${\bf vu}$ are orthogonal and ${\bf r}\wedge{\bf w}=0$.
Thus $\left|\alpha\right|\leq1$ since if $k$ is archimedean then
$\|{\bf v}'\|^{2}=\left|\alpha\right|^{2}\|{\bf vu}\|^{2}+\|{\bf r}\|^{2},$ and
if not then $\|{\bf v}'\|=\max\left\{ \left|\alpha\right|,\|{\bf r}\|\right\} $.
Hence
\begin{align*}
d\left(V',W\right) & =\|{\bf v}'\wedge{\bf w}\|=\left|\alpha\right|\cdot\|{\bf vu}\wedge{\bf w}\|
\leq d\left(V\cap U,W\right)\,\,\text{.}
\end{align*}
\end{proof}

From the definition of $\delta$ and Lemma \ref{lem:projdi}, we have the following facts, proved below:
\begin{equation}\label{eqdistsym} \textnormal{If} \, \,\,\, V\cap W =\{0\}: \,\,\, \delta(V,W)=d(V,W)
\end{equation}

\begin{equation}\label{distsym} \delta(V,W)=\delta(W,V)
\end{equation}

\begin{equation}\label{incl} \textnormal{If} \,\, V'\leq V \textnormal{ is orthogonal to }V\cap W\, : \,\,\,   \delta(V',W)\ge\delta(V,W)
\end{equation}

\begin{equation}\label{hdist}  \textnormal{If} \, \,\,\,x\in k^d\setminus\{0\}: \,\,\,  d(kx,W)=\inf_{y \in W\setminus\{0\}}d(x,y)
\end{equation}

Facts \eqref{eqdistsym} and \eqref{distsym} are immediate from the definition and Lemma \ref{lem:projdi}, respectively.
In light of Lemma \ref{lem:projdi}, to prove \eqref{incl} it suffices to prove that $d(V',W)\ge d(V,W)$: Let $U$ be an orthogonal complement to $V'$ in $V$, and let ${\bf v'}$, ${\bf u}$ and ${\bf w}$ be pure wedges of unit norm corresponding to $V'$, $U$ and $W$, respectively. Then ${\bf v}\coloneqq {\bf v'}\wedge{\bf u}$ is a pure wedge of unit norm corresponding to $V$, and $d(V,W)=\|{\bf v} \wedge {\bf w}\|=\|{\bf v'}\wedge {\bf u} \wedge {\bf w}\|\leq \|{\bf v'} \wedge {\bf w}\| =d(V',W)$.
Finally, to prove $\eqref{hdist}$, note first that $d(kx,W)\leq d(x,y)$ for all $y \in W$
as we have just shown.
To see that equality is realized for some $y\in W\setminus\{0\}$, pick an orthogonal complement $W'$ to $W$ in $k^d$ and write  $x=w'+w$, $w'\in W'$, $w\in W$. If $w=0$ we are done.
Otherwise, $d(x,w)=d(kx,W)$ since $\|x \wedge \ww\|=\|w' \wedge \ww\|=\|w'\|=\|x\wedge w\|/\|w\|$.

We let $C_k=1$ if $k$ is ultrametric and $C_k=d$ if $k$ is archimedean. The following lemma will be essential. It formulates quantitatively the intuition that if a point is close to each subspace in a given family of subspaces, then it must be close to their intersection. The fact that the multiplicative constant $C_k$ is equal to $1$ in the non-archemdean case will also be essential when we upgrade this inequality to the adelic setting in the next section. And so will be the explicit form of the multiplicative constant as a power of $\delta$ in the lower bound.

\begin{lem}\label{intersec} Let $V_1,\ldots,V_n$ be subspaces in $k^d$ and $x \in k^d$. Then
$$d(x,V_1 \cap \ldots \cap V_n) \cdot\delta^{\lceil \log_2 n \rceil} \leq C_k \max_{1\leq i \leq n}\{d(x,V_i)\}$$
where $\delta=\inf_{I,J} \delta(V_I,V_J)$, where $I,J \subset [r]$ and $V_I:=\cap_{i \in I} V_i$.
\end{lem}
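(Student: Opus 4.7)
My plan is a divide-and-conquer induction on $n$ with a base case at $n=2$, preceded by a preliminary reduction to $n\le d$ subspaces.

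For the reduction, I greedily select a subfamily $V_{i_1},\ldots,V_{i_m}$ such that each added subspace strictly decreases the running intersection; since codimensions are bounded by $d$ this yields $m\le d$ with $\bigcap_{j=1}^m V_{i_j}=\bigcap_{i=1}^n V_i$. The infimum of $\delta(V_I,V_J)$ over subsets of the subfamily is at least the original $\delta$; using $\delta\le 1$ we have $\delta^{\lceil\log_2 n\rceil}\le\delta^{\lceil\log_2 m\rceil}$, and $\max_j d(x,V_{i_j})\le\max_i d(x,V_i)$. Hence it suffices to prove the lemma when $m\le d$.

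For the base case $n=2$, I use Remark \ref{rem:orthogonal-complement} to write $k^d=(V_1\cap V_2)\oplus U$ orthogonally and decompose $x=p+u$ with $p\in V_1\cap V_2$ and $u\in U$. Setting $V_i':=V_i\cap U$ one has $V_1'\cap V_2'=\{0\}$, and a direct wedge calculation exploiting the multiplicativity $\|a\wedge b\|=\|a\|\|b\|$ on orthogonal pure wedges gives $d(x,V_1\cap V_2)=\|u\|/\|x\|$ and $d(x,V_i)=d(u,V_i')\cdot\|u\|/\|x\|$; moreover $\delta(V_1,V_2)=d(V_1',V_2')$ by Lemma \ref{lem:projdi} together with \eqref{eqdistsym}. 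The $n=2$ case thus reduces to proving, for every $u\in U\setminus\{0\}$ and every pair $V_1',V_2'\le U$ with $V_1'\cap V_2'=\{0\}$,
\[
d(V_1',V_2')\le C_k\max(d(u,V_1'),d(u,V_2')).
\]
In the ultrametric case ($C_k=1$), I pick an orthogonal complement ${V_1'}^{*}$ of $V_1'$ in $U$, write $u=u_1+u^{*}$ accordingly, and expand the unit pure wedge $v_2'$ representing $V_2'$ by its bi-degree in $V_1'\oplus {V_1'}^{*}$; then $\|u\wedge v_1'\|=\|u^{*}\|$, the no-factor-in-$V_1'$ component of $v_2'$ is a pure wedge in $\Lambda^{*}{V_1'}^{*}$ of norm exactly $\|v_1'\wedge v_2'\|$, whence $\|u\wedge v_2'\|\ge\|u_1\|\cdot\|v_1'\wedge v_2'\|$, and combining with $\|u\|=\max(\|u_1\|,\|u^{*}\|)$ yields the bound. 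In the archimedean case, the analogous inequality is verified with $C_k\le\sqrt{2}$ by a direct computation using the principal angles between $V_1'$ and $V_2'$.

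For the inductive step, pair up the $V_i$: set $W_j:=V_{2j-1}\cap V_{2j}$ for $j\le\lceil n/2\rceil$ (with $W_{\lceil n/2\rceil}:=V_n$ if $n$ is odd). Applying the base case to each pair and using $\delta(V_{2j-1},V_{2j})\ge\delta$ gives $d(x,W_j)\le(C_k/\delta)\max(d(x,V_{2j-1}),d(x,V_{2j}))$. Since any intersection of $W$'s is also an intersection of $V$'s, the infimum $\delta$ for the $W$-family dominates $\delta$; applying the induction hypothesis to the $\lceil n/2\rceil$ subspaces $W_j$ and using $\lceil\log_2 n\rceil=\lceil\log_2\lceil n/2\rceil\rceil+1$ yields the recursion $F(n)\le C_k\cdot F(\lceil n/2\rceil)$ for the best constant, whence $F(n)\le C_k^{\lceil\log_2 n\rceil}$. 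After the preliminary reduction to $m\le d$, this gives $F(m)\le 1$ in the ultrametric case and $F(m)\le(\sqrt{2})^{\lceil\log_2 d\rceil}\le d$ in the archimedean case (using $d\ge 2$), as required. The main technical obstacle will be verifying the sharp constant $C_k=1$ in the ultrametric base case without loss, which hinges on the bi-graded wedge analysis above and requires fully exploiting the integral $\calO$-module structure on $k^d$.
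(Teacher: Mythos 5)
Your overall strategy — a divide-and-conquer induction with base case $n=2$, a preliminary reduction to $n\le d$ subspaces, and an orthogonal splitting of $k^d$ along $V_1\cap V_2$ — closely parallels the paper's proof, and the reductions you perform (deriving $d(x,V_1\cap V_2)=\|u\|/\|x\|$ and $d(x,V_i)=d(u,V_i')\cdot\|u\|/\|x\|$, and identifying $\delta(V_1,V_2)=d(V_1',V_2')$ via Lemma~\ref{lem:projdi}) are all correct. The recursion bookkeeping with $\lceil\log_2 n\rceil=\lceil\log_2\lceil n/2\rceil\rceil+1$ is also sound.

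However, the ultrametric base case contains a genuine gap. You assert that the bi-graded wedge expansion of $v_2'$ yields $\|u\wedge v_2'\|\ge\|u_1\|\cdot\|v_1'\wedge v_2'\|$ unconditionally. This is false. Take $U=k^2$, $V_1'=ke_1$ (so $V_1'^*=ke_2$), $V_2'=k(e_1+e_2)$, and $u=e_1+e_2\in V_2'$: then $u_1=e_1$, $u^*=e_2$, $\|v_1'\wedge v_2'\|=\|e_1\wedge(e_1+e_2)\|=1$, so the claimed right-hand side is $1$, yet $u\wedge v_2'=0$. The trouble is that the bi-degree-$(1,\dim V_2')$ component of $u\wedge v_2'$ is $u_1\wedge\alpha_0+u^*\wedge\alpha_1$ (where $\alpha_j$ is the bi-degree-$(j,\,\cdot\,)$ component of $v_2'$), and the second summand can cancel the first. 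One correct fix is to apply the argument to $u_1\wedge v_2'$ instead — there the bi-degree-$(1,\dim V_2')$ component is just $u_1\wedge\alpha_0$, giving $\|u_1\wedge v_2'\|\ge\|u_1\|\,\|v_1'\wedge v_2'\|$ — and then combine with the ultrametric triangle inequality $\|u_1\wedge v_2'\|\le\max\bigl(\|u\wedge v_2'\|,\|u^*\wedge v_2'\|\bigr)\le\max\bigl(\|u\wedge v_2'\|,\|u^*\|\bigr)$, followed by a short case analysis depending on whether $\|u^*\|$ dominates $\|u_1\|\cdot\|v_1'\wedge v_2'\|$. This is essentially how the paper argues in its $n=2$ case (writing $x=x_1+x_{12}+x_2$ and bounding $d(x,V_{12})$ via the max of $\|x_1\|,\|x_2\|$, each of which is compared to $d(x,V_i)$ using \eqref{incl}). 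Until the base-case inequality is repaired in this way, the claimed sharp constant $C_k=1$ over non-archimedean fields is not established. The archimedean estimate with $\sqrt 2$ is stated without verification but is at least plausible; the paper instead uses a simpler linear bound that gives $C'_k(2)=2$, arriving at $C_k=d$ via a linear rather than doubling recursion after the same reduction to $n\le d$.
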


\begin{proof}
We may assume without loss of generality that $\|x\|=1$.
We shall prove by induction on $n$ the same statement with $C_k$ replaced by $C'_k(n)=n$ when $k$ is archimedean and $C'_k(n)=1$ otherwise. Clearly this implies the result, because the intersection of $n$ proper subspaces coincides with that of at most $d$ of them. So we now set out to prove the statement with $C'_k(n)$ for all $n$. Without loss of generality we may assume that $n$ is a power of $2$; simply add as many copies of the same subspace, say $V_n$, as required. Suppose now that we have proved the statement for $n=2$ and for $n=2^m$ and let us prove it for $2n=2^{m+1}$. Set $W_1$ the intersection of the $V_i$ for $i\leq n$ and $W_2$ the intersection of the remaining $V_i$. By the $n=2$ case, we have:
$$d(x,V_1 \cap \ldots \cap V_{2n}) \cdot\delta(W_1,W_2) \leq C'_k(2)\max\{d(x,W_1),d(x,W_2)\}.$$
Now by induction hypothesis, we have
$$d(x,W_1) \delta_1^m  \leq C'_k(n)\max_{i \le n}\{d(x,V_i)\}$$
and 
$$d(x,W_2) \delta_2^m  \leq C'_k(n)\max_{n<i \le 2n}\{d(x,V_i)\}$$
where $\delta_1=\inf_{I,J \subset [n]} \delta_{I,J}$, $\delta_2=\inf_{I,J \subset [2n]\setminus[n]} \delta_{I,J}$, and $\delta_{I,J}:=\delta(V_I,V_J)$. Note that  $\delta_1,\delta_2$ and  $\delta(W_1,W_2)$ are $\geq \delta$. Since $C'_k(2n)=C'_k(2)C'_k(n)$, combining the above inequalities yields the result.

It remains to prove the statement for $n=2$. We give the proof in
the case when $V_{1}$ and $V_{2}$ do not intersect trivially, and
leave the entirely analogous case when they do to the reader. Choose
subspaces $V_{1}'$ and $V_{2}'$ of $k^{d}$ giving orthogonal decompositions
$V_{1}=\left(V_{1}\cap V_{2}\right)\oplus V_{1}'$ and $V_{2}=\left(V_{1}\cap V_{2}\right)\oplus V_{2}'$.
Assume first that $x\in V_{1}+V_{2}$, and write $x=x_{1}+x_{12}+x_{2}$,
where $x_{i}\in V_{i}'$ and $x_{12}\in V_{12}$. Then
\[
d\left(x,V_{12}\right)=\|x_{1}\wedge{\bf v}_{12}+x_{2}\wedge{\bf v}_{12}\|\leq\begin{cases}
\|x_{1}\|+\|x_{2}\| & \text{\ensuremath{k} archimedean}\\
\max\left\{ \|x_{1}\|,\|x_{2}\|\right\}  & \text{\ensuremath{k} non-archimedean}
\end{cases}
\]
 But
\[
d\left(x,V_{1}\right)=\|x\wedge{\bf v}_{1}\|=\|x_{2}\wedge{\bf v}_{1}\|=\|x_{2}\|d\left(x_{2},V_{1}\right)\geq\|x_{2}\|\delta
\]
by \eqref{incl}, and similarly,
\[
d\left(x,V_{2}\right)=\|x\wedge v_{2}\|\geq\|x_{1}\|\delta
\]
and the result follows. Finally, assume that $x\notin V_{1}+V_{2}$,
choose an orthogonal complement $Y$ for $V_{1}+V_{2}$ in $k^{d}$,
and write $x=x_{v}+x_{y}$, where $x_{v}\in V_{1}+V_{2}$ and $x_{y}\in Y$.
We have just shown that
\begin{align}\label{resll}
\delta\|x_{v}\wedge{\bf v}_{12}\|\leq\begin{cases}
\|x_{v}\wedge{\bf v}_{1}\|+\|x_{v}\wedge{\bf v}_{2}\| & \text{\ensuremath{k} archimedean}\\
\max\left\{ \|x_{v}\wedge{\bf v}_{1}\|,\|x_{v}\wedge{\bf v}_{2}\|\right\}  & \text{\ensuremath{k} non-archimedean}
\end{cases}\,\,\text{.}
\end{align}
On the other hand, $t\coloneqq\|x_{y}\|=\|x_{y}\wedge{\bf v}_{12}\|=\|x_{y}\wedge{\bf v}_{1}\|=\|x_{y}\wedge{\bf v}_{2}\|$.
If $k$ is archimedean then
\begin{align*}
d\left(x,V_{12}\right)^{2} & =\|x_{v}\wedge{\bf v}_{12}\|^{2}+\|x_{y}\wedge{\bf v}_{12}\|^{2}=\|x_{v}\wedge{\bf v}_{12}\|^{2}+t^{2}
\end{align*}
since $x_{v}\wedge{\bf v}_{12}$ and $x_{y}\wedge{\bf v}_{12}$ are
orthogonal, as is evident from the orthogonal decomposition $\Lambda^{\ell}k^{d}=\bigoplus_{i}\Lambda^{i}\left(V_{1}+V_{2}\right)\wedge\Lambda^{\ell-i}Y$
for $\ell=1+\dim_{k}V_{1}\cap V_{2}$. Similarly, $d\left(x,V_{1}\right)^{2}=\|x_{v}\wedge{\bf v}_{1}\|^{2}+t^{2}$
and $d\left(x,V_{2}\right)^{2}=\|x_{v}\wedge{\bf v}_{2}\|^{2}+t^{2}$.
Thus, noting that $\delta\leq1$ and combining \eqref{resll} with Minkowski's
inequality yields:
\begin{align*}
\delta d\left(x,V_{12}\right) & =\delta\left(\|x_{v}\wedge{\bf v}_{12}\|^{2}+t^{2}\right)^{1/2}\leq\left(\delta^{2}\|x_{v}\wedge{\bf v}_{12}\|^{2}+t^{2}\right)^{1/2}\\
 & \leq\left(\left(\|x_{v}\wedge{\bf v}_{1}\|+\|x_{v}\wedge{\bf v}_{2}\|\right)^{2}+\left(t+t\right)^{2}\right)^{1/2}\\
 & \leq\left(\|x_{v}\wedge{\bf v}_{1}\|^{2}+t^{2}\right)^{1/2}+\left(\|x_{v}\wedge{\bf v}_{2}\|^{2}+t^{2}\right)^{1/2}\\
 & =d\left(x,V_{1}\right)+d\left(x,V_{2}\right)
\end{align*}
as required. If $k$ is non-archimedean then $d\left(x,V_{\alpha}\right)=\max\left\{ \|x_{v}\wedge{\bf v}_{\alpha}\|,\|x_{y}\|\right\} $
for $\alpha\in\left\{ 1,2,12\right\} $, while $\delta\|x_{v}\wedge{\bf v}_{12}\|\leq\max\left\{ \|x_{v}\wedge{\bf v}_{1}\|,\|x_{v}\wedge{\bf v}_{2}\|\right\} $
and the result follows (noting again that $\delta\leq1$).

\end{proof}

\subsection{Dynamics of projective transformations}
In order to construct the finite set of ping-pong players appearing in the ``ping-pong with overlaps lemma'', Lemma \ref{ping-pong with overlaps}, it is important to get very good control of the dynamics of powers of projective transformations on projective space. It will be essential to do this in a way that is uniform with respect to the local field. It will also be essential that the constants appearing in the calculations disappear when we deal with ultrametric local fields. Our aim here is the following result. 

Let $k$ be a local field and $\gamma \in \GL_d(k)$. For $\omega \in \RR$ we let $A_\gamma\leq k^d$ (resp. $R_\gamma \leq k^d$) be the sum of the generalized eigenspaces of $\gamma$ with eigenvalues of modulus $\ge \omega$ (resp. $<\omega$). Note that although generalized eigenspaces are only defined over a finite extension of $k$, the subspaces $A_\gamma,R_\gamma$ are indeed defined over $k$ as they are stable under Galois. 
Write $\Lambda_\omega(\gamma)$ and $\lambda_\omega(\gamma)$ for the moduli of the smallest eigenvalue $\geq\omega$ of $\gamma$ and largest eigenvalue $<\omega$ of $\gamma$, respectively
(so $\lambda_\omega(\gamma)< \omega \leq \Lambda_\omega(\gamma)$).
The operator norm is denoted by $\|\gamma\|$, while the largest modulus of an eigenvalue of $\gamma$ is denoted by $\Lambda(\gamma)$.  As before, we let $C_k=1$ if $k$ is ultrametric and $C_k=d$ if $k$ is archimedian.

\begin{lem}\label{dynamics} For every $n \ge 1$ and $x \in \PP(k^d)$ we have
$$d(\gamma^n x, A_\gamma)\cdot d(x,R_\gamma) \cdot d(A_\gamma,R_\gamma)^2\leq (\|\gamma\|\cdot \|\gamma^{-1}\|)^{d-2} \left[C_k^2 \frac{\lambda_\omega(\gamma)}{\Lambda_\omega(\gamma)}\right]^n\,\,.$$
\end{lem}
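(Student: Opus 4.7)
The plan is to exploit the $\gamma$-invariant decomposition $k^d = A_\gamma \oplus R_\gamma$ throughout. Normalize to $\|x\|=1$ and write $x = a + r$ with $a \in A_\gamma$ and $r \in R_\gamma$. Since $r\wedge \vv_R = 0$ and $\gamma^n a\in A_\gamma$ forces $\gamma^n a\wedge \vv_A = 0$, the two projective distances have the clean wedge-product identities
\[
d(x,R_\gamma) \;=\; \|a\wedge \vv_R\|,\qquad d(\gamma^n x,A_\gamma) \;=\; \|\gamma^n r\wedge \vv_A\|\,/\,\|\gamma^n x\|,
\]
which by $\|u\wedge w\|\le \|u\|\|w\|$ are bounded by $\|a\|$ and $\|\gamma^n r\|/\|\gamma^n x\|$ respectively.

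Next I would bound the oblique projections $\pi_A,\pi_R$ onto the two summands. The key estimate, valid uniformly over local fields, is that $\|\pi_A\|,\|\pi_R\|\le 1/d(A_\gamma,R_\gamma)$. In the archimedean case this comes from writing $d(A_\gamma,R_\gamma)$ as the product of the sines of the principal angles between $A_\gamma$ and $R_\gamma$, so it is bounded above by the smallest such sine, whose reciprocal is precisely $\|\pi_A\|$. In the non-archimedean case it follows from the $\calO$-orthogonality structure of Section \ref{sec:ortho}. This gives $\|a\|,\|r\|\le 1/d(A_\gamma,R_\gamma)$ and, applied to $\gamma^n x = \gamma^n a + \gamma^n r$, the complementary lower bound $\|\gamma^n x\|\ge d(A_\gamma,R_\gamma)\|\gamma^n a\|$.

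Chaining these with $\|\gamma^n r\|\le \|\gamma^n|_{R_\gamma}\|\|r\|$ and the trivial identity $a = \gamma^{-n}(\gamma^n a)$, which yields $\|a\|/\|\gamma^n a\|\le \|\gamma^{-n}|_{A_\gamma}\|$, the bookkeeping collapses to the reduction
\[
d(\gamma^n x, A_\gamma)\cdot d(x,R_\gamma)\cdot d(A_\gamma,R_\gamma)^2 \;\le\; \|\gamma^n|_{R_\gamma}\|\cdot\|\gamma^{-n}|_{A_\gamma}\|.
\]
It then remains to show that the right-hand side is at most $(\|\gamma\|\|\gamma^{-1}\|)^{d-2}\bigl(C_k^2\lambda_\omega/\Lambda_\omega\bigr)^n$, which is the principal obstacle.

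This final estimate is an effective spectral-radius bound for $\gamma|_{R_\gamma}$ and $\gamma^{-1}|_{A_\gamma}$, whose spectral radii are $\lambda_\omega$ and $1/\Lambda_\omega$ respectively. I would pass to an algebraic closure and bring each restriction to Jordan form; the change-of-basis to Jordan coordinates contributes a multiplicative factor absorbed into $(\|\gamma\|\|\gamma^{-1}\|)^{d-2}$. In the non-archimedean case ($C_k=1$) the ultrametric bound $|\binom{n}{j}|\le 1$ annihilates the polynomial-in-$n$ Jordan contribution and the bound is immediate. In the archimedean case ($C_k=d$), the polynomial Jordan factor, of order at most $(n+1)^{d-1}$, together with the Hermitian change-of-basis norm, must be absorbed into the exponential margin $C_k^{2n}=d^{2n}$ and the algebraic factor $(\|\gamma\|\|\gamma^{-1}\|)^{d-2}$; arranging the bookkeeping so that the bound holds uniformly for every $n\ge 1$ and every local field is the main technical work.
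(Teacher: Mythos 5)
Your reduction is correct and matches the paper's: both proofs decompose $x=x_A+x_R$, express $d(x,R_\gamma)$ and $d(\gamma^n x,A_\gamma)$ via wedge products involving $x_A$ and $x_R$, use the separation bounds $\|x_R\|\le\|x\|/d(A_\gamma,R_\gamma)$ and $\|\gamma^n x\|\ge d(A_\gamma,R_\gamma)\|\gamma^n x_A\|$ (your oblique-projection phrasing is a harmless repackaging of the paper's use of \eqref{incl}), and arrive at the need to bound $\|\gamma^n|_{R_\gamma}\|\cdot\|\gamma^{-n}|_{A_\gamma}\|$ by $(\|\gamma\|\|\gamma^{-1}\|)^{d-2}(C_k^2\lambda_\omega/\Lambda_\omega)^n$.

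The gap is in your final step. The assertion that the change of basis to Jordan form ``contributes a multiplicative factor absorbed into $(\|\gamma\|\|\gamma^{-1}\|)^{d-2}$'' is false: the Jordan conjugating matrix is governed by the eigenvalue separation of $\gamma$, not by $\|\gamma\|\|\gamma^{-1}\|$. Take $\gamma$ the $2\times 2$ upper-triangular matrix with diagonal entries $1$ and $1-\eps$ and a $1$ in the corner; then $\|\gamma\|\|\gamma^{-1}\|=O(1)$ as $\eps\to 0$, while the diagonalizing matrix has condition number $\asymp 1/\eps\to\infty$. The same failure occurs $p$-adically when eigenvalues are $p$-adically close, so the non-archimedean case is not ``immediate'' either — the obstruction is the conjugating matrix, not the binomial coefficients. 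The paper sidesteps this through Lemma~\ref{conjlem}, which is exactly the ingredient your sketch is missing: one first triangularizes $\gamma$ by a matrix $h_0$ with $\|h_0\|=\|h_0^{-1}\|=1$ (Iwasawa in the ultrametric case, Schur in the archimedean case — no ill-conditioning), and then conjugates by an explicit diagonal matrix $h_1$ with $\|h_1\|,\|h_1^{-1}\|\le(\|\gamma\|/\Lambda(\gamma))^{(d-1)/2}$ that shrinks the off-diagonal entries so that every entry of $h\gamma h^{-1}$ has modulus $\le\Lambda(\gamma)$. This yields $\|h\gamma h^{-1}\|\le C_k\Lambda(\gamma)$ and hence $\|\gamma^n\|\le(\|\gamma\|/\Lambda(\gamma))^{d-1}(C_k\Lambda(\gamma))^n$ with no $n$-dependent polynomial factor and no reference to Jordan structure at all.
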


A version of this lemma appears in \cite[Lemma 4.6]{strong-tits}. We shall now provide a complete proof for convenience.

\begin{lem}\label{conjlem} Let $\gamma \in \GL_d(k)$. There is $h \in \SL_d(k^{al})$ such that $\|h\gamma h^{-1}\|\leq C_k\Lambda(\gamma)$ and $\max\{\|h\|,\|h^{-1}\|\} \leq (\|\gamma\|/\Lambda(\gamma))^{\frac{d-1}{2}}$.
\end{lem}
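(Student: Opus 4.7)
The strategy is a two-step conjugation: first bring $\gamma$ into upper-triangular form by a change of basis of operator norm $1$ adapted to a $\gamma$-stable complete flag, and then shrink the off-diagonal part by a diagonal conjugation whose scale is calibrated against the ratio $\|\gamma\|/\Lambda(\gamma)$. The diagonal part $D$ will be taken in $\SL_d(k^{\al})$, and the triangularizing matrix $u$ will be arranged to satisfy $\|u\|=\|u^{-1}\|=1$, so that all the size comes from $D$ and matches the exponent $(d-1)/2$.

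For the triangularization step, in the archimedean case I would invoke the classical Schur decomposition over $\CC$ to obtain a unitary $u$ with $T\coloneqq u^{-1}\gamma u$ upper triangular, giving $\|u\|=\|u^{-1}\|=1$ and the eigenvalues of $\gamma$ on the diagonal of $T$. In the ultrametric case I would pass to a finite extension $k'\subset k^{\al}$ that splits the characteristic polynomial of $\gamma$ (so $k'$ is still a local field with a DVR as ring of integers), fix a $\gamma$-stable complete flag $V_1\subset\cdots\subset V_d=(k')^d$, and construct an orthogonal basis $v_1,\dots,v_d$ with $v_i\in V_i$ and $\|v_i\|=1$ by applying Remark \ref{rem:orthogonal-complement} iteratively to produce orthogonal complements $W_i$ of $V_{i-1}$ in $V_i$ and normalizing a generator $v_i$ of $W_i$. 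Taking $u=[v_1\mid\cdots\mid v_d]$, the ultrametric identity $\|M\|=\max_{ij}|M_{ij}|$ together with orthogonality of the unit-norm $v_i$ yields $\|u\|=\|u^{-1}\|=1$ (since each column has max entry of absolute value $1$, and the orthogonality forces $\|e_j\|=\max_i|c_{ij}|=1$ when one writes $e_j=\sum_ic_{ij}v_i$), while Hadamard's inequality with equality in the orthogonal case gives $|\det u|=1$. In either regime one has $|T_{ii}|\leq\Lambda(\gamma)$ and $\|T\|\leq\|\gamma\|$.

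For the scaling step, set $t\coloneqq\|\gamma\|/\Lambda(\gamma)\geq 1$ and $D\coloneqq\operatorname{diag}(t^{(d-1)/2},t^{(d-3)/2},\dots,t^{-(d-1)/2})\in\SL_d(k^{\al})$, so that $\|D\|=\|D^{-1}\|=t^{(d-1)/2}$. A direct computation gives $(D^{-1}TD)_{ij}=t^{i-j}T_{ij}$: the diagonal is unchanged and for $j>i$ the entries pick up a factor of modulus at most $t^{-1}$, hence have absolute value at most $t^{-1}\|T\|\leq t^{-1}\|\gamma\|=\Lambda(\gamma)$. In the ultrametric case the operator norm equals the maximum entry, yielding $\|D^{-1}TD\|\leq\Lambda(\gamma)$; in the archimedean case the crude bound $\|M\|\leq d\max_{ij}|M_{ij}|$ (from Cauchy--Schwarz) yields $\|D^{-1}TD\|\leq d\,\Lambda(\gamma)$, matching $C_k$ in both regimes. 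Setting $h\coloneqq D^{-1}u^{-1}$ gives $h\gamma h^{-1}=D^{-1}TD$, and submultiplicativity produces the desired bound $\max\{\|h\|,\|h^{-1}\|\}\leq t^{(d-1)/2}=(\|\gamma\|/\Lambda(\gamma))^{(d-1)/2}$. To land in $\SL_d(k^{\al})$ one rescales $u$ by a scalar $\alpha\in k^{\al}$ with $|\alpha|=1$ and $\alpha^d=(\det u)^{-1}$, which exists since $|\det u|=1$ and $k^{\al}$ contains all $d$-th roots; this does not affect the norms. The only delicate point is the ultrametric construction of the norm-one triangularizer, which is why I prefer to work over the finite extension $k'$ rather than directly over $k^{\al}$ whose ring of integers is not a PID.
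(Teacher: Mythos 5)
Your proof is correct and follows essentially the same two-step strategy as the paper: first conjugate $\gamma$ into triangular form by an $h_0$ of operator norm one (so that the diagonal entries are eigenvalues, hence of modulus at most $\Lambda(\gamma)$, while all entries remain of modulus at most $\|\gamma\|$), and then apply the diagonal conjugation $\operatorname{diag}(t^{(d-1)/2},\dots,t^{-(d-1)/2})$ with $|t|=\|\gamma\|/\Lambda(\gamma)$ to damp the off-diagonal entries down to modulus at most $\Lambda(\gamma)$, at the cost of $\max\{\|h_1\|,\|h_1^{-1}\|\}=|t|^{(d-1)/2}$. The only cosmetic difference is in how the norm-one triangularizer is produced: the paper applies the Iwasawa decomposition $\GL_d=\mathbb{K}B_0$ to a $\gamma^{T}$-stable flag and then transposes (yielding a lower-triangular conjugate), whereas you construct an orthogonal unit basis adapted to a $\gamma$-stable flag over a splitting extension via Remark~\ref{rem:orthogonal-complement} (yielding an upper-triangular conjugate); these are two ways of packaging the same fact.
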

\begin{proof} This is  \cite[Lemma 4.9]{strong-tits}. We include the proof here for completeness. First we claim that if $\lambda \in k^{al}$ an eigenvalue of $\gamma$, there is some $h_0\in\SL_d(k^{al})$ such that $\|h_0\|=\|h_0^{-1}\|=1$ and $h_0\gamma h_0^{-1}$ is lower triangular with top left entry equal to $\lambda$. To see this, consider a full flag $\mathcal{F}$ in $(k^{al})^d$ fixed by $\gamma^T$, the transpose of $\gamma$. We can choose $\mathcal{F}$ so that it starts with the line $k^{al}v,$ where $v$ is an eigenvector of $\gamma^{T}$ with eigenvalue $\lambda.$  Full flags are conjugate under $\GL_{d}(k^{al})$, so $\mathcal{F}=g\mathcal{F}_{0}$ for some $g\in \GL(k^{al})$ with $ge_{1}=v$, where $\mathcal{F}_{0}$ is the flag generated by the canonical basis of $(k^{al})^d$.  The Iwasawa decomposition reads $\GL_{d}(k^{al})=\mathbb{K}_{k^{al}}B_{0}$ where $B_{0}$ is the Borel stabilizing 
$\mathcal{F}_{0}$ and  $\mathbb{K}_{k^{al}}:=\{g \in \GL_{d}(k^{al}), \|g\|=\|g^{-1}\|=1\}$. So we may assume that $g \in \mathbb{K}_{k^{al}}$ and take $h_0=\alpha g^T$ for a suitable unit $\alpha \in k^{al}$ with $|\alpha|=1$ chosen so that $\det h_0 = 1$.
Now note that
$\max_{i,j} |a_{ij}| \leq \|a\|\leq C_{k}\max_{i,j} |a_{ij}|$ for every $a \in \GL_d(k^{al})$. If we let $h_1:=t^{\frac{d+1}{2}}diag(t^{-1},...,t^{-d})\in SL_{d}(k^{al}),$ where $t\in k^{al}$ is such that $|t|_{k}\Lambda(\gamma)=\|h_0\gamma h_0^{-1}\|=\|\gamma\|,$  then $\max \{\left\|
h_1\right\| ,\|h_1^{-1}\|\} = |t|_{k}^{\frac{d-1}{2}} = (\| \gamma \|/\Lambda(\gamma))^{\frac{d-1}{2}}$ and the
coefficients of $h_1h_0 \gamma h_0^{-1}h_1^{-1}$ are of modulus $\leq \Lambda(\gamma).$ So setting $h=h_1h_0$, we are done. 
\end{proof}

\begin{proof}[Proof of Lemma \ref{dynamics}]
Write $d_{a}=\dim A_{\gamma}$ and $d_{r}=\dim R_{\gamma}$. We may
decompose $x\in k^{d}\setminus\left\{ 0\right\} $ as $x=x_{A}+x_{R}$
according to the direct sum $k^{d}=A_{\gamma}\oplus R_{\gamma}$.
The statement follows by multiplying the following inequalities, which are proved below:
\begin{align}
d\left(\gamma^{n}x,A_{\gamma}\right) & \leq\frac{\|x_{R}\|}{\|\gamma^{n}x\|}(\|\gamma\|\cdot \|\gamma^{-1}\|)^{d_{r}-1}\left(C_{k}\lambda_{\omega}\left(\gamma\right)\right)^{n}\label{eq:dynamics-1}\\
d\left(A_{\gamma},R_{\gamma}\right) & \leq\frac{\|\gamma^{n}x\|}{\|\gamma^{n}x_{A}\|}\label{eq:dynamics-2}\\
d\left(A_{\gamma},R_{\gamma}\right) & \leq\frac{\|x\|}{\|x_{R}\|}\label{eq:dynamics-3}\\
d\left(x,R_{\gamma}\right) & \leq\frac{\|\gamma^{n}x_{A}\|}{\|x\|}(\|\gamma\|\cdot \|\gamma^{-1}\|)^{d_{a}-1}\left(C_{k}\Lambda_{\omega}\left(\gamma\right)^{-1}\right)^{n}\label{eq:dynamics-4}
\end{align}
Let ${\bf a}$ (resp. ${\bf r}$) be a unit pure wedge vector representing
$A_{\gamma}$ (resp. $R_{\gamma}$). Then
\begin{align*}
d\left(\gamma^{n}x,A_{\gamma}\right) & =\frac{\|\gamma^{n}x\wedge{\bf a}\|}{\|\gamma^{n}x\|}=\frac{\|\gamma^{n}x_{R}\wedge{\bf a}\|}{\|\gamma^{n}x\|}\leq\frac{\|\gamma^{n}x_{R}\|}{\|\gamma^{n}x\|}
\end{align*}
and, using \eqref{incl} in the last inequality,
\[
d\left(\gamma^{n}x,R_{\gamma}\right)=\frac{\|\gamma^{n}x\wedge{\bf r}\|}{\|\gamma^{n}x\|}=\frac{\|\gamma^{n}x_{A}\wedge{\bf r}\|}{\|\gamma^{n}x\|}\geq d\left(A_{\gamma},R_{\gamma}\right)\frac{\|\gamma^{n}x_{A}\|}{\|\gamma^{n}x\|}
\]
so \eqref{eq:dynamics-2} follows. By applying Lemma \ref{conjlem} to $\gamma$ on $R_{\gamma}$ and noting
that $\Lambda\left(\gamma_{\mid R_{\gamma}}\right)=\lambda_{\omega}\left(\gamma\right)$,
\[
\|\gamma^{n}x_{R}\|\leq\|h^{-1}\|\|\left(h\gamma h^{-1}\right)^{n}\|\|hx_{R}\|\leq \left(\frac{\|\gamma{}_{\mid R_{\gamma}}\|}{\Lambda(\gamma{}_{\mid R_{\gamma}})}\right)^{d_{r}-1}\left(C_{k}\lambda_{\omega}\left(\gamma\right)\right)^{n}\|x_{R}\|
\]
and \eqref{eq:dynamics-1} follows. By applying the same lemma to
$\gamma^{-1}$ on $A_{\gamma}$ and noting that $\Lambda\left(\gamma_{\mid A_{\gamma}}^{-1}\right)=\Lambda_{\omega}\left(\gamma\right)^{-1}$,
\[
\|x_{A}\|\leq \left(\frac{\|\gamma_{\mid A_{\gamma}}^{-1}\|}{\Lambda\left(\gamma_{\mid A_{\gamma}}^{-1}\right)}\right)^{d_{a}-1}\left(C_{k}\Lambda_{\omega}\left(\gamma\right)^{-1}\right)^{n}\|\gamma^{n}x_{A}\|\,\,,
\]
and \eqref{eq:dynamics-4} follows since $d\left(x,R_{\gamma}\right)\leq\frac{\|x_{A}\|}{\|x\|}$.
Finally, by \eqref{incl}, $d\left(A_{\gamma},R_{\gamma}\right)\leq d\left(A_{\gamma},x_{R}\right)=\frac{\|{\bf a}\wedge x_{R}\|}{\|x_{R}\|}=\frac{\|{\bf a}\wedge x\|}{\|x_{R}\|}\leq\frac{\|x\|}{\|x_{R}\|}$,
and \eqref{eq:dynamics-3} follows.
\end{proof}

\subsection{Local conditions for ping-pong}\label{localc}

We keep the notation of the previous paragraph, in particular $C_k=d$ if $k$ is archimedian and $1$ otherwise. For $\gamma \in \GL_d(k)$ we write $$\alpha_\omega(\gamma)\coloneqq\frac{\lambda_\omega(\gamma)}{\Lambda_\omega(\gamma)} \in (0,1),$$ where,  as before,  $\lambda_\omega(\gamma)$ and $\Lambda_\omega(\gamma)$ are modulus of the eigenvalues of $\gamma$ on either side of $\omega>0$. The  goal of this subsection is to prove:

\begin{prop}\label{prop}Let $\gamma,g_1,\ldots, g_r \in GL_d(k)$ and $\gamma_i:=g_i\gamma g_i^{-1}$.  Assume that each family  $\{g_1A_\gamma,\ldots, g_rA_\gamma\}$ and $\{g_1R_\gamma,\ldots, g_rR_\gamma\}$ is made of $r$ distinct subspaces in weak general position. Let   $$\delta:=\min_{I,J \subset [r]}\{\delta(A_\gamma,R_\gamma), \delta(A^I_\gamma,A^J_\gamma), \delta(R^I_\gamma,R^J_\gamma)\},$$
$$\Delta:=\max_{i\leq r} \{\|\gamma\|, \|\gamma^{-1}\|,\|g_i\|,\|g_i^{-1}\|\}^2,$$
where $A^I_\gamma = \cap_{i\in I} g_iA_\gamma$ and $R^I_\gamma= \cap_{i \in I} g_iR_\gamma$. 
Then for each $n \ge 1$,  $F:=\{\gamma_1^n,\ldots,\gamma_r^n\}$ is in $d$-ping-pong position on $X=\PP(k^d)$, provided \begin{equation}\label{ppcoco}(C_k^2 \alpha_\omega(\gamma))^n < \frac{ \delta^{2+2\lceil \log_2 d \rceil}}{C_k^2\Delta^{3(d-2)}}.\end{equation}
\end{prop}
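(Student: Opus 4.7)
The plan is to construct explicit attracting/repelling neighbourhoods of the subspaces $g_iA_\gamma$ and $g_iR_\gamma$ inside $X=\PP(k^d)$ and then verify the two $d$-ping-pong axioms by invoking Lemma \ref{dynamics} for (i) and Lemma \ref{intersec} (combined with weak general position) for (ii). For a threshold $\eta \in (0,1)$ to be chosen, set
\[
A_i \;:=\; \{x\in X : d(x, g_iA_\gamma) < \eta\}, \qquad R_i \;:=\; \{x\in X : d(x, g_iR_\gamma) < \eta\},
\]
with $d$ the projective distance of Section \ref{prodi}. It then suffices to show that both axioms hold for all $\eta$ in a certain window, and to recognise that this window is non-empty precisely under hypothesis \eqref{ppcoco}.

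For axiom (i) I observe that $A_{\gamma_i}=g_iA_\gamma$, $R_{\gamma_i}=g_iR_\gamma$, that $\alpha_\omega(\gamma_i)=\alpha_\omega(\gamma)$ (eigenvalues are conjugation-invariant), and that $\|\gamma_i\|\cdot\|\gamma_i^{-1}\|\leq\Delta^3$ since each of the six factors in $\|g_i\|\|\gamma\|\|g_i^{-1}\|\cdot\|g_i\|\|\gamma^{-1}\|\|g_i^{-1}\|$ is at most $\sqrt{\Delta}$. Applying Lemma \ref{dynamics} to $\gamma_i$ at a point $x$ outside $R_i$, and using the lower bound $d(g_iA_\gamma,g_iR_\gamma)\geq\delta$ encoded in the definition of $\delta$ (via \eqref{eqdistsym} and $A_\gamma\cap R_\gamma=\{0\}$), one gets
\[
d(\gamma_i^n x, g_iA_\gamma)\;\leq\;\frac{\Delta^{3(d-2)}(C_k^2\alpha_\omega(\gamma))^n}{\eta\,\delta^2},
\]
whence $\gamma_i^n(X\setminus R_i)\subset A_i$ whenever $\eta^2>\Delta^{3(d-2)}(C_k^2\alpha_\omega(\gamma))^n/\delta^2$.

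For axiom (ii) I bound $\sum_i\mathbf{1}_{R_i}(x)$ (the case of $A_i$ is symmetric). Suppose some $x\in X$ lies in at least $d$ of the $R_i$, and let $I\subset[r]$ be a choice of $d$ such indices. Weak general position forces $\codim\bigcap_{i\in I}g_iR_\gamma\geq d$, so $R_\gamma^I=\{0\}$ and $d(x,R_\gamma^I)=1$. Lemma \ref{intersec} applied to the $d$ subspaces $\{g_iR_\gamma\}_{i\in I}$, with the lower bound $\delta$ on the relevant inter-intersection distances $\delta(R^{I'},R^{J'})$ coming from the definition of $\delta$, yields
\[
1\;=\;d(x,R_\gamma^I)\;\leq\;\frac{C_k\max_{i\in I}d(x,g_iR_\gamma)}{\delta^{\lceil\log_2 d\rceil}}\;<\;\frac{C_k\,\eta}{\delta^{\lceil\log_2 d\rceil}},
\]
which is a contradiction as soon as $\eta\leq\delta^{\lceil\log_2 d\rceil}/C_k$. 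Hence the multiplicity is at most $d-1$ in this regime.

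The lower and upper constraints on $\eta$ are simultaneously satisfiable iff
\[
\frac{\Delta^{3(d-2)/2}(C_k^2\alpha_\omega(\gamma))^{n/2}}{\delta}\;<\;\frac{\delta^{\lceil\log_2 d\rceil}}{C_k},
\]
which upon squaring is exactly \eqref{ppcoco}. I expect the main technical subtlety to lie in the overlap axiom: the exponent $\lceil\log_2 d\rceil$ of $\delta$ is inherited from the dyadic subdivision in the proof of Lemma \ref{intersec} and is precisely what keeps the ping-pong multiplicity bounded by the dimension $d$ rather than growing with the number $r$ of players, a feature essential for the uniformity sought in later sections.
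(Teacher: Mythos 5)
Your proposal is correct and follows essentially the same argument as the paper: same choice of $\eps$-neighbourhoods of the subspaces $g_iA_\gamma$ and $g_iR_\gamma$, axiom (i) via Lemma \ref{dynamics} with the same $\|\gamma_i\|\|\gamma_i^{-1}\|\leq\Delta^3$ estimate, axiom (ii) via Lemma \ref{intersec} together with weak general position giving $R_\gamma^I=\{0\}$, and the same two-sided window on the threshold whose non-emptiness is exactly \eqref{ppcoco}. You spell out a couple of small steps the paper leaves implicit (the conjugation-invariance $\alpha_\omega(\gamma_i)=\alpha_\omega(\gamma)$ and the explicit six-factor bound $\Delta^3$), but the route is identical.
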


\begin{proof} Let $\eps>0$ and let $\A_{\gamma_i}:=\{x \in \PP(k^d)\mid d(x,A_{\gamma_i})\leq \eps\}$ and similarly $\R_{\gamma_i}:=\{x \in \PP(k^d)\mid d(x,R_{\gamma_i})\leq \eps\}$, where  $A_{\gamma_i}:=g_iA_{\gamma}$ and $R_{\gamma_i}:=g_iR_{\gamma}$. We claim that if $\eps<\delta^{\lceil \log_2 d \rceil} /C_k$, then condition $(ii)$ from the Section \ref{pplemma} is satisfied with $M=d$ (even $d-1$). Indeed, imagine that  $x \in \PP(k^d)$ belongs to $\R_{\gamma_i}$ for at least $d$ indices $i$, say for $i=1,\ldots,d$. By the weak general position assumption, the intersection of the $R_{\gamma_i}$ is trivial, and hence in view of Lemma \ref{intersec}, we obtain $\delta^{\lceil \log_2 d \rceil} \leq C_k \eps$, a contradiction. The same holds with $A_{\gamma_i}$ in place of $R_{\gamma_i}$. We now check  condition $(i)$. Assume that $x \notin \R_{\gamma_i}$, so that $d(x,R_{\gamma_i}) \ge \eps$ and apply Lemma \ref{dynamics}
$$d(\gamma_i^n x, A_{\gamma_i}) \cdot \eps \delta^2\leq (\|\gamma_i\|\cdot \|\gamma_i^{-1}\|)^{d-2} (C_k^2\alpha_{\omega})^n \leq \Delta^{3(d-2)} (C_k^2\alpha_{\omega})^n.$$
So $\gamma_i^n x \in \A_{\gamma_i}$ provided $(C_k^2 \alpha_\omega)^n\leq \frac{\eps^2 \delta^2}{\Delta^{3(d-2)}}$. Now under the assumption of the proposition, there clearly exists a suitable $\eps>0$ which also satisfies $\eps<\delta^{\lceil \log_2 d \rceil}/C_k$. We are done.
\end{proof}

\section{Heights}\label{heights}

Let $K$ be a number field. The set of places of $K$ will be denoted by $V_K$ and for $v \in V_K$ we denote by $K_v$ the corresponding completion of $K$. If $K_v$ is $\RR$ or $\CC$, we say that $v$ is an infinite place and set $\QQ_v=\RR$, while if $K_v$ is non-archimedean (i.e. a $p$-adic field) we say that $v$ is finite and set $\QQ_v=\QQ_p$, where $p$ is the characteristic of the residue field of $K_v$. We let $n_v=[K_v:\QQ_v]$, and let $|\cdot|_v$ be the unique absolute value on $K_v$ that extends the standard absolute value on $\QQ_v$. The (absolute) Weil height of $x \in K$ is defined as:
$$h(x)=\frac{1}{[K:\QQ]}\sum_{v \in V_K} n_v \log^+|x|_v,$$
where $\log^+=\max\{\log,0\}$. The \emph{product formula} asserts that for $x \neq 0$ \begin{equation}\label{prodform}\prod_{v \in V_K} |x|_v^{n_v}=1.\end{equation}
 The Weil height is defined for $x \in K$, but one of its appealing features is that its definition is independent of the choice of number field containing $x$. We refer the reader to Bombieri and Gubler's book \cite{BombieriGubler2006} for this and all the background we shall need about heights. In particular we note that for all $x,y \in K$, $$h(xy) \leq h(x)+h(y)$$ and  \begin{equation}\label{suba1}h(x+y) \leq h(x)+h(y)+\log 2. \end{equation} 

\subsection{Height of matrices}\label{hmatrix}
On each $K_v^d$ we use the standard norm, namely  $\|x\|_v=\max_{1\leq i\leq d} |x_i|_v$ if $v$ is finite and $\|x\|_v^2=\sum_{1\leq i\leq d} |x_i|_v^2$ if $v$ is infinite. This allows to define a height on $K^d$ by $$h(x)=\frac{1}{[K:\QQ]}\sum_{v \in V_K} n_v \log^+ \|x\|_v.$$
On the space of $d$-by-$d$ matrices $M_d(K)$ we define analogously:
$$h(A)=\frac{1}{[K:\QQ]}\sum_{v \in V_K} n_v \log^+ \|A\|_v$$
where $\|A\|_v$ is the associated operator norm. 

It is easy to check that for every place $v\in V_K$, we have
$\|A\|_{v}=\|A^{T}\|_{v}$, $\left|a_{ij}\right|_{v}\leq\|A\|_{v}$ for every entry $a_{ij}$ of $A$, and $\left|\lambda\right|_{v}\leq\|A\|_{v}$  for every eigenvalue $\lambda$ of $A$.
Therefore $h(A)=h(A^T)$, $h(a_{ij}) \leq h(A)$ and $h(\lambda)\leq h(A)$. From this it follows easily that $h(A)=0$ if and only if every row and column of $A$ has at most one non-zero entry, which needs to be a root of unity. Indeed $h(A)=0$ forces $\|A\|_v\leq 1$ for each place $v$.

For any $A,B \in M_d(K)$, by submultiplicativity of the operator norm:
$$h(AB) \leq h(A) + h(B)$$
and at each place the triangle (ultrametric for finite $v$) inequality gives
 \begin{equation}\label{suba2}h(A+B) \leq h(A)+h(B)+\log 2. \end{equation}

\subsection{Height of subspaces} When dealing with projective subspaces, it will be convenient to introduce the following variant of the Weil height, the so-called Arakelov height. For a point $x=\left(x_{1}:\cdots:x_{d}\right) \in \PP(K^d)$  it is defined as
$$h_{\Ar}(x)=\frac{1}{[K:\QQ]}\sum_{v \in V_K} n_v \log\|\left(x_{1},\dotsc,x_{d}\right)\|_v,$$
It is clear from $(\ref{prodform})$ that $h_{\Ar}(x)$ does not depend on the choice of projective coordinates, so it is well defined on $\PP(K^d)$, does not depend on the choice of $K$, and is non-negative (as we can assume one of the $x_i$ to be equal to $1$). Now if $W \leq K^d$ is a vector subspace, we define
$$h_{\Ar}(W)=h_{\Ar}(\Lambda^{\dim W} W)$$
where $\Lambda^{\dim W} W$ is a line in the vector space $\Lambda^{\dim W} K^d$ endowed with the canonical basis given by the wedges of vectors from the canonical basis of $K^d$. The following is well-known (\cite[2.8.13]{BombieriGubler2006}):
\begin{lem}[submodularity of Arakelov height]\label{submod} If $W,V\leq K^d$ are two subspaces, then $$h_{\Ar}(V \cap W) + h_{\Ar}(V+W) \leq h_{\Ar}(V) + h_{\Ar}(W).$$
\end{lem}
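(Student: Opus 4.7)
The plan is to reduce to a local inequality at each place and then sum. First I would fix a basis $e_1, \dotsc, e_a$ of $V \cap W$, complete it to a basis $(e_i)_{1 \leq i \leq \dim V}$ of $V$, and separately adjoin $f_{a+1}, \dotsc, f_{\dim W}$ so that $(e_1, \dotsc, e_a, f_{a+1}, \dotsc, f_{\dim W})$ is a basis of $W$; then $(e_1, \dotsc, e_{\dim V}, f_{a+1}, \dotsc, f_{\dim W})$ is a basis of $V+W$. Let $\vec{a}, \vec{v}, \vec{w}, \vec{b}$ denote the ordered pure wedges of these bases: each is a specific nonzero global representative of the corresponding line in an exterior power of $K^d$. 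By the product formula, $h_{\Ar}(U) = \frac{1}{[K:\QQ]} \sum_{v \in V_K} n_v \log \|\vec{u}\|_v$ for any nonzero representative $\vec{u}$ of $\bigwedge U$, so submodularity follows once one establishes the local inequality
\[
\|\vec{a}\|_v \cdot \|\vec{b}\|_v \;\leq\; \|\vec{v}\|_v \cdot \|\vec{w}\|_v \qquad (\ast)
\]
at every place $v \in V_K$.

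At an archimedean $v$, $(\ast)$ is Fischer's inequality for the Gram matrix $G$ of the concatenated basis above with respect to the standard Hermitian form: the squared norms $\|\vec{a}\|_v^2$, $\|\vec{v}\|_v^2$, $\|\vec{w}\|_v^2$, $\|\vec{b}\|_v^2$ equal the Gram determinants of the corresponding sub-bases, and the classical submodularity of $\log \det$ on the cone of positive semidefinite matrices yields $\det(G_{V \cap W}) \det(G) \leq \det(G_V) \det(G_W)$.

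At a non-archimedean $v$, I would use the orthogonality framework of Section \ref{sec:ortho}. Writing $\calO_v$ for the ring of integers of $K_v$ and applying an element of $\GL(V \otimes K_v)$ that fixes $(V \cap W) \otimes K_v$ and preserves the lattice $V \cap \calO_v^d$, one can choose a complement $V'$ to $(V \cap W) \otimes K_v$ in $V \otimes K_v$ orthogonal to the former in the sense of \S\ref{sec:ortho}; proceed similarly inside $W \otimes K_v$. Writing $\vec{s}, \vec{t}$ for the wedges of these complements, the equality case of \eqref{wineq} gives $\|\vec{v}\|_v = \|\vec{a}\|_v \|\vec{s}\|_v$ and $\|\vec{w}\|_v = \|\vec{a}\|_v \|\vec{t}\|_v$, while iterated submultiplicativity from \eqref{wineq} yields $\|\vec{b}\|_v = \|\vec{a} \wedge \vec{s} \wedge \vec{t}\|_v \leq \|\vec{a}\|_v \|\vec{s}\|_v \|\vec{t}\|_v$; combining these gives $(\ast)$. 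Since the local basis change is $\calO_v$-unimodular, it acts by isometries on $\bigwedge^* \calO_v^d$, so the global norms coincide with those of the adjusted wedges.

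The main obstacle is the non-archimedean step: one has to verify that the local orthogonal rearrangement of bases is realized by a transformation of $\GL_d(\calO_v)$-type so as to preserve all relevant wedge norms. This rests on the invariance of the sup-norm on $\calO_v^d$ under $\GL_d(\calO_v)$ used throughout Section \ref{sec:ortho} and on the structure theorem for $\calO_v$-submodules of $K_v^d$.
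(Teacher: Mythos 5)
Your reduction to the local inequality $(\ast)$ at each place, after fixing a compatible global basis of $V\cap W\subset V,W\subset V+W$, is the right framework and makes explicit what the paper's terse proof leaves implicit; the paper then treats all places uniformly via locally orthogonal complements and \eqref{wineq}, whereas your Gram-determinant (Hadamard--Fischer--Koteljanskii) route for archimedean places is a correct but different way to get the same local inequality. The genuine problem is in the non-archimedean step, where you justify replacing the fixed global complement wedge by an orthogonal one on the grounds that the local basis change is $\calO_v$-unimodular, hence an isometry on $\bigwedge^*\calO_v^d$. This is not available in general: there need be no $g\in\GL(V\otimes K_v)$ fixing $(V\cap W)\otimes K_v$, preserving the lattice $V\cap\calO_v^d$, and carrying your global complement $S_0=\spn_{K_v}(e_{a+1},\dotsc,e_{\dim V})$ onto an orthogonal complement. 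Concretely, in $K_v^2$ with $U=K_ve_1$ and $S_0=K_v(\pi^{-1}e_1+e_2)$ for a uniformizer $\pi$, any lattice-preserving $g$ with $g(U)=U$ has matrix $\begin{pmatrix}a&b\\0&d\end{pmatrix}$ with $a,d\in\calO_v^\times$, $b\in\calO_v$, and sends the slope $\pi^{-1}$ to $(a\pi^{-1}+b)/d$, whose absolute value is still $>1$; so $g(S_0)$ is never orthogonal to $U$.

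The correct bridge is the determinant-one shear, and it is what implicitly underlies the paper's one-line proof as well. The complements $S_0$ and $V'$ of $(V\cap W)\otimes K_v$ in $V\otimes K_v$ are related by the unique unipotent $\sigma$ which is the identity on $(V\cap W)\otimes K_v$ and carries $S_0$ onto $V'$ (i.e.\ $\sigma(u+s)=u+s+\phi(s)$ for some linear $\phi\colon S_0\to (V\cap W)\otimes K_v$). Since $\det\sigma=1$, $\sigma$ acts trivially on $\Lambda^{\dim V}(V\otimes K_v)$, so
\[
\vec{v}=\vec{a}\wedge\vec{s}_0=\sigma(\vec{a}\wedge\vec{s}_0)=\vec{a}\wedge\sigma(\vec{s}_0),
\]
and $\vec{s}\coloneqq\sigma(\vec{s}_0)$ is a pure wedge representing $V'$; orthogonality of $V\cap W$ and $V'$ then gives $\|\vec{v}\|_v=\|\vec{a}\|_v\|\vec{s}\|_v$. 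Likewise $\vec{w}=\vec{a}\wedge\vec{t}$ with $\vec{t}=\tau(\vec{t}_0)$ for the analogous shear $\tau$ in $W$, and since $\tau-\mathrm{id}$ takes values in $V\cap W$ and wedging with the full wedge $\vec{a}$ of $V\cap W$ kills those terms, $\vec{b}=\vec{a}\wedge\vec{s}_0\wedge\vec{t}_0=\vec{a}\wedge\vec{s}\wedge\vec{t}_0=\vec{a}\wedge\vec{s}\wedge\vec{t}$. With these identities, your chain $\|\vec{a}\|_v\|\vec{b}\|_v\le\|\vec{a}\|_v^2\|\vec{s}\|_v\|\vec{t}\|_v=\|\vec{v}\|_v\|\vec{w}\|_v$ is correct. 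So the proof is salvaged, but the reason the adjusted wedges compute the same norms has nothing to do with unimodularity; it is because the adjustment is a unipotent change of complement, invisible at the level of top exterior powers.
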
 \begin{proof} This follows from the local inequality at each place $$\|u\|_v\|u\wedge v'\wedge w'\|_v\leq \|v\|_v\|w\|_v$$ where $V'$ (resp. $W'$) is an orthogonal complement to $U:=V\cap W$ in $V$ (resp. $W$), $u,v',w'$ are pure wedge vectors in $\Lambda^*K_v^d$ representing $U$, $V'$ and $W'$ respectively, and   $v:=u\wedge v'$, $w:=u\wedge w'$. This inequality is a direct consequence of \eqref{wineq} noting that $\|u\|_v\|w'\|_v=\|w\|_v$.\end{proof}

If $A \in M_d(K)$ has rank $r$ then 
\begin{equation}\label{arbd}h_{\Ar}(\image A) \leq r \cdot h(A)\end{equation}
Indeed there are $r$ elements of the canonical basis of $K^d$, say $e_{i_1},\ldots,e_{i_r}$ such that $\image A$ is spanned by the $w_j:=Ae_{i_j}$. But $\|w_j\|_v\leq \|A\|_v$, so $\|w_1\wedge \ldots \wedge w_r\|_v\leq \|A\|^r_v$ for all places and $(\ref{arbd})$ follows.

The following will also be helpful:
\begin{lem}\label{lines} If $V\leq K^d$ is a $K$-subspace of dimension $r$, then there are $r$ linearly independent lines $\ell_1,\ldots,\ell_r$ in $V$ such that $h_{\Ar}(\ell_i)\leq h_{\Ar}(V)$ for each $i=1,\ldots,r$.
\end{lem}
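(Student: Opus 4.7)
The plan is to exploit the submodularity of the Arakelov height (Lemma \ref{submod}) together with the fact that any subspace of $K^d$ spanned by a subset of the standard basis vectors $e_1,\ldots,e_d$ has Arakelov height zero (its Plücker coordinate is a unit basis vector of $\Lambda^{*}K^d$, hence of local norm $1$ at every place). I will construct each line $\ell_i$ as the intersection of $V$ with a carefully chosen coordinate subspace $W_i$ of dimension $d-r+1$ arranged so that $V+W_i=K^d$ and $V\cap W_i$ is one-dimensional; submodularity will then immediately give
\[
h_{\Ar}(\ell_i) + h_{\Ar}(K^d) \;\leq\; h_{\Ar}(V) + h_{\Ar}(W_i),
\]
and since both $h_{\Ar}(K^d)$ and $h_{\Ar}(W_i)$ vanish, the desired bound $h_{\Ar}(\ell_i)\leq h_{\Ar}(V)$ follows.

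To produce the $W_i$, I will use the fact that some Plücker coordinate of $V$ is nonzero: there is a subset $I\subset\{1,\ldots,d\}$ of size $r$ such that the coordinate projection $\pi_I\colon V\to K^I$ is an isomorphism. Let $\{v_i\}_{i\in I}$ be the basis of $V$ uniquely determined by $\pi_I(v_i)=e_i$. For each $i\in I$, set
\[
W_i \;:=\; \spn\bigl\{\,e_j \;:\; j\in \{i\}\cup([d]\setminus I)\,\bigr\},
\]
which has dimension $d-r+1$ and is a coordinate subspace, so $h_{\Ar}(W_i)=0$. A direct check using the basis $\{v_j\}_{j\in I}$ shows that a vector $\sum_j a_j v_j$ lies in $W_i$ if and only if its coordinates along $I\setminus\{i\}$ vanish, equivalently $a_j=0$ for $j\neq i$; hence $V\cap W_i = Kv_i$. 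By dimension count, $\dim(V+W_i)=r+(d-r+1)-1=d$, so $V+W_i=K^d$ and $h_{\Ar}(V+W_i)=0$.

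Setting $\ell_i := Kv_i$, the $r$ lines are linearly independent (the $v_i$ being a basis of $V$), and the submodularity inequality of Lemma \ref{submod} applied to the pair $(V,W_i)$ yields $h_{\Ar}(\ell_i)\leq h_{\Ar}(V)$ as required. No step appears to present a serious obstacle: the only slightly delicate point is verifying that at each place $v$ (archimedean or not) the standard coordinate subspaces really have local norm one on their Plücker wedge, which follows immediately from the definition of $\|\cdot\|_v$ on wedge powers via the standard basis, as recorded at the beginning of \S\ref{sec:ortho}.
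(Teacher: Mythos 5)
Your proof is correct. Let me verify the key claims: since some Plücker coordinate of $V$ is nonzero, there is indeed an $r$-subset $I\subset[d]$ with $\pi_I\colon V\to K^I$ an isomorphism; the $W_i$ are coordinate subspaces of dimension $d-r+1$, hence $h_{\Ar}(W_i)=0$; the computation $V\cap W_i=Kv_i$ is right (a vector $\sum_{j\in I} a_jv_j$ has coordinate $a_k$ at each position $k\in I$, so membership in $W_i$ forces $a_k=0$ for $k\in I\setminus\{i\}$); the dimension count gives $V+W_i=K^d$; and submodularity then delivers $h_{\Ar}(\ell_i)\le h_{\Ar}(V)$. Since the $v_i$ are a basis of $V$, the lines are independent.

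Your route is genuinely different from the paper's, although both hinge on the same two facts: submodularity of $h_{\Ar}$ and the vanishing of $h_{\Ar}$ on coordinate subspaces. The paper proceeds by induction on $r$: it first locates a single line $\ell_1$ as $V\cap E_k$ for a suitable tail coordinate subspace $E_k=\spn\{e_{k+1},\dots,e_d\}$, bounds $h_{\Ar}(\ell_1)$ by submodularity, then descends to the codimension-one slice $V\cap H_i$ (a coordinate hyperplane chosen so $\ell_1\not\subset H_i$), which again has height $\le h_{\Ar}(V)$ by submodularity, and recurses. Your proof is non-inductive and extracts all $r$ lines simultaneously from the single index set $I$ furnished by a nonvanishing Plücker coordinate. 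The tradeoff is modest: your construction is more explicit and avoids the recursion, at the cost of invoking the Plücker-coordinate nonvanishing observation (which the paper's argument does implicitly via the chain of intersections with the $E_k$ and $H_i$). Either way, the essential mechanism — pairing $V$ against zero-height coordinate subspaces and applying Lemma \ref{submod} — is the same.
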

\begin{proof} We proceed by induction on $r$. Let $E_k$ be the $K$-span of the elements $e_{k+1},\ldots,e_d$ of the  canonial basis of $K^d$.  Note that $h_{\Ar}(E_k)=0$. Also $V\cap E_0=V$, $V \cap E_d=0$, and $\dim V \cap E_{k+1}\ge \dim V \cap E_k -1$, so there is $k<d$ such that $\dim V \cap E_k=1$. Let $\ell_1=V \cap E_k$. By Lemma \ref{submod} $h_{\Ar}(\ell_1) \leq h_{\Ar}(E_k)+h_{\Ar}(V)=h_{\Ar}(V)$. Now for $i\in [1,d]$ let $H_i$ be the hyperplane spanned by all $e_j$, $j \neq i$. There is $i$ such that $\ell_1 \notin H_i$.  Clearly $\dim H_i \cap V = r-1$. Also, since $h_{\Ar}(H_i)=0$, we have $h_{\Ar}(V \cap H_i) \leq h_{\Ar}(V)$. We may now use the induction hypothesis to find the remaining linearly independent lines in $V \cap H_i$. 
\end{proof}

We now introduce the Arakelov distance between two $K$-subspaces $V,W$ in $K^d$ as follows:
$$\delta_{\Ar}(V,W) = \frac{1}{[K:\QQ]} \sum_{v \in V_K} n_v \cdot \log \frac{1}{\delta_v(V,W)}$$
where $\delta_v$ is the projective distance over $K_v$ defined in $(\ref{projdi})$ of \S \ref{prodi} and $\delta_v(V,W)$ is shorthand for $\delta_v(V\otimes_K K_v,W\otimes_K K_v)$. Similarly, we write $d_v(V,W)$ for $d_v(V\otimes_K K_v,W\otimes_K K_v)$.

Recall that $\delta_v(V,W)\in (0,1]$, so $\delta_{\Ar}(V,W) \ge 0$. The Arakelov distance is not a distance in the usual sense (it does not satisfy the triangle inequality and can be zero for example if $V \leq W$ or $W \leq V$ or if $W$ and $V$ are orthogonal at all places) but it is non-negative and is a measure of how arithmetically transverse the two subspaces $V$ and $W$ are.  Note that each term in the sum is non-negative and only finitely many of them are non-zero.  Indeed, if $V'$ is a subspace of $K^d$ and $V' \oplus (V \cap W) =V$, then for each place $v$, $\delta_v(V,W)\ge d_v(V',W)$ by definition of $\delta_v.$ But the wedge vectors $\vv'$ and $\ww$ representing $V'$ and $W$ have coordinates in $K$, so $\|\vv' \wedge \ww\|_v=\|\vv'\|_v=\|\ww\|_v=1$ for all but finitely many $v$. So $d_v(V',W)=1$ and hence $\delta_v(V,W)=1$  for all but finitely many places.

 When $V \cap W=0$ then $d_v(V,W)=\delta_v(V,W)$, so rearranging the terms we see that 
\begin{equation}\label{trans}\delta_{\Ar}(V,W)= h_{\Ar}(V) + h_{\Ar}(W) -  h_{\Ar}(V+W).\end{equation}
It will be important for us to consider the general case, when $V$ and $W$ can intersect. Then we have:

\begin{lem}\label{delar} For every two subspaces $V,W \leq K^d$, we have: $$\delta_{\Ar}(V,W)  \leq h_{\Ar}(V) + h_{\Ar}(W')  - h_{\Ar}(V+W),$$ 
for any linear subspace $W'$ complementing $V \cap W$ in $W$. Furthermore,
$$\delta_{\Ar}(V,W) \leq d \max \{h_{\Ar}(V), h_{\Ar}(W)\}.$$
\end{lem}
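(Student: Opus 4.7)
The strategy is to reduce from $W$ to a well-chosen complement $W'$ of $V\cap W$ in $W$, for which the intersection-free identity \eqref{trans} applies, and then to control $h_{\Ar}(W')$ via Lemma \ref{lines} and the submodularity of Arakelov height.

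For the first inequality, I would argue locally and then sum. Fix $W'\leq W$ with $W'\oplus (V\cap W)=W$; then $V\cap W'=0$. By the symmetry \eqref{distsym} together with the definition of $\delta_v$ (and the symmetry of $d_v$), one has $\delta_v(V,W)=\delta_v(W,V)\geq d_v(V,W')$ at every place $v$. Applying $\eqref{eqdistsym}$ to the trivially intersecting pair $(V,W')$ gives $d_v(V,W')=\delta_v(V,W')$. Passing to $-\log$ and summing against $n_v$ yields $\delta_{\Ar}(V,W)\leq\delta_{\Ar}(V,W')$. Since $V\cap W'=0$, identity \eqref{trans} gives $\delta_{\Ar}(V,W')=h_{\Ar}(V)+h_{\Ar}(W')-h_{\Ar}(V+W')$, and $V+W'=V+W$ because $V\cap W\subseteq V$. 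This produces the first asserted inequality.

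For the second inequality, I would use the first with a specific choice of $W'$ of controlled height. Apply Lemma \ref{lines} inside $W$: there exist $r:=\dim W$ linearly independent lines $\ell_1,\dots,\ell_r\leq W$ with $h_{\Ar}(\ell_i)\leq h_{\Ar}(W)$ for each $i$. Setting $s:=\dim(V\cap W)$, the images of the $\ell_i$'s span the $(r-s)$-dimensional quotient $W/(V\cap W)$, so one can extract a subset $\{\ell_{i_1},\dots,\ell_{i_{r-s}}\}$ whose images form a basis of $W/(V\cap W)$; equivalently, $W':=\ell_{i_1}+\cdots+\ell_{i_{r-s}}$ satisfies $W'\oplus(V\cap W)=W$. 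Since the $\ell_{i_j}$ are in direct sum, iterating the submodularity of $h_{\Ar}$ (Lemma \ref{submod}, which yields $h_{\Ar}(V_1+V_2)\leq h_{\Ar}(V_1)+h_{\Ar}(V_2)$ whenever $V_1\cap V_2=0$) gives
\[
h_{\Ar}(W')\leq \sum_{j=1}^{r-s} h_{\Ar}(\ell_{i_j})\leq (r-s)\,h_{\Ar}(W).
\]

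Finally, combining this with the first inequality and discarding the nonnegative term $h_{\Ar}(V+W)$, I obtain $\delta_{\Ar}(V,W)\leq h_{\Ar}(V)+(r-s)\,h_{\Ar}(W)$. When $s\geq 1$ we have $r-s\leq d-1$, so this is bounded by $h_{\Ar}(V)+(d-1)h_{\Ar}(W)\leq d\max\{h_{\Ar}(V),h_{\Ar}(W)\}$; when $s=0$ one takes $W'=W$ and the bound is even better. The main obstacle in the argument is that in general $V\cap W\neq 0$, which forces the passage from $W$ to a complement $W'$; the symmetry of $\delta_v$ is precisely what makes this passage monotone in the right direction.
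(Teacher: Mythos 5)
Your proposal is correct and follows essentially the same route as the paper's own proof: in both cases the first inequality is obtained by reducing to the intersection-free identity \eqref{trans} via the pointwise bound $\delta_v(V,W)\ge d_v(V,W')$ (which, as you rightly note, uses the symmetry \eqref{distsym}), and the second inequality is obtained by using Lemma \ref{lines} and submodularity to construct a complement $W'$ with $h_{\Ar}(W')\leq(\dim W-\dim V\cap W)\,h_{\Ar}(W)$. You are a bit more explicit than the paper in spelling out the symmetry step and the basis-extraction argument in $W/(V\cap W)$, but the strategy and all the key ingredients are identical.
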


\begin{proof} By definition of $\delta_v$, at each place $v\in V_K$ we have $\delta_v(V,W) \ge d_v(V,W')$. Summing over all places and using $(\ref{trans})$ the first inequality follows. To prove the second inequality,
we are allowed to assume that $V \cap W \neq \{0\}$.
We need to choose a complement $W'$ with small height.
By Lemma \ref{lines}, there are $\dim W$ linearly independent lines in $W$, each of Arakelov height at most $h_{\Ar} (W)$.
We pick $\dim W-\dim V \cap W$ of them that span a linear subspace $W'$ complementing $V\cap W$ in $W$.
By subadditivity of $h_{\Ar}$ (as follows from Lemma \ref{submod}), we get $h_{\Ar}(W') \leq (\dim W-\dim V\cap W )h_{\Ar}(W)$. The result then follows from the first inequality. 
\end{proof}

If $W^\perp$ denotes the orthogonal to $W$ in the dual of $K^d$ identified with $K^d$ via the canonical pairing $\langle x,y\rangle=\sum x_iy_i$, then it is easy to check that $h_{\Ar}(W^\perp)=h_{\Ar}(W)$ \cite[2.8.10]{BombieriGubler2006}. As a result, if $A \in M_d(K)$ is a matrix with rank $r$, we obtain from $(\ref{arbd})$ that 
$$h_{\Ar}(\ker A)=h_{\Ar}((\ker A)^\perp)=h_{\Ar}(\image A^T)\leq r \cdot h(A^T) =  r \cdot h(A)$$

More generally we can upper bound the height of the generalized eigenspaces as follows:

\begin{lem}\label{ainv} If $A \in M_d(K)$ and $W\leq (\QQ^{\al})^d$ is an $A$-invariant subspace, then 
$$h_{\Ar}(W) \leq d^2(2h(A)+\log 2).$$
\end{lem}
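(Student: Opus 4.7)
The plan is to decompose $W$ via the primary decomposition of $(\QQ^{\al})^d$ as a $\QQ^{\al}[A]$-module, bound the Arakelov height of each generalized eigenspace by viewing it as the kernel of an explicit polynomial in $A$, and invoke subadditivity of $h_{\Ar}$ to sum the contributions.

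First I would set up notation: let $\lambda_1,\ldots,\lambda_s\in\QQ^{\al}$ be the distinct eigenvalues of $A$ with algebraic multiplicities $m_i$, and $E_i=\ker(A-\lambda_iI)^{m_i}$ the generalized eigenspaces, so that $\sum_im_i=d$. Because $W$ is $A$-invariant, $W=\bigoplus_i(W\cap E_i)$, and in the case of interest for the rest of the paper -- the sum-of-generalized-eigenspaces case relevant for $A_\gamma,R_\gamma$ in \S\ref{local} -- each summand is either $0$ or all of $E_i$, so one may write $W=\bigoplus_{i\in S}E_i$ for some $S\subseteq\{1,\ldots,s\}$.

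The heart of the argument is then to bound $h_{\Ar}(E_i)$ by $d\cdot m_i(2h(A)+\log 2)$. Since $\lambda_i$ is an eigenvalue of $A$ one has $h(\lambda_i)\leq h(A)$ (as recorded in \S\ref{hmatrix}); combined with \eqref{suba2} this yields $h(A-\lambda_iI)\leq 2h(A)+\log 2$, and submultiplicativity of $h$ under matrix products gives $h\bigl((A-\lambda_iI)^{m_i}\bigr)\leq m_i(2h(A)+\log 2)$. Applying the kernel bound $h_{\Ar}(\ker B)\leq \rank(B)\cdot h(B)$ -- which follows from \eqref{arbd} applied to $B^T$ together with the duality $h_{\Ar}(V^\perp)=h_{\Ar}(V)$ -- with $\rank\bigl((A-\lambda_iI)^{m_i}\bigr)=d-m_i$ closes this step. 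Subadditivity of $h_{\Ar}$ (Lemma \ref{submod}) together with $\sum_im_i\leq d$ then produces the bound $h_{\Ar}(W)\leq d^2(2h(A)+\log 2)$ in this case.

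I expect the main obstacle to be the truly general situation where $W\cap E_i$ is a proper nonzero $A$-invariant subspace of $E_i$: on $E_i$ the operator $A-\lambda_iI$ acts nilpotently, and its invariant subspaces form infinite families parametrized by partial Jordan flags, so $h_{\Ar}(W\cap E_i)$ is not controlled by the data of $A$ alone (indeed, already for $A=I$ any line is $A$-invariant). To rescue the statement one must exploit further structure on $W$ beyond mere $A$-invariance -- for instance that $W$ is a sum of full generalized eigenspaces, or that it comes with a Galois-closed $K$-structure -- which is the situation occurring in the applications of this lemma later in the paper, so the argument above is sufficient for its intended use.
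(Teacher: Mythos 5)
Your proof is correct and essentially identical to the paper's: the paper likewise bounds $h_{\Ar}(E_\lambda)$ by applying the kernel bound $h_{\Ar}(\ker B)\le \rank(B)\,h(B)$ (from \eqref{arbd} and duality) to $B=(A-\lambda)^{n_\lambda}$, uses $h(\lambda)\le h(A)$ and submultiplicativity to get $h(B)\le n_\lambda(2h(A)+\log 2)$, and sums over $\lambda$ via subadditivity. Your caveat about truly arbitrary $A$-invariant subspaces is well taken, but it matches the paper's (implicit) reading: the paper's proof simply asserts that $W$ is a sum of generalized eigenspaces, which is exactly the situation in which the lemma is invoked (the admissible subspaces $gW$ in Lemma \ref{sbound}), so your restricted argument covers the intended use.
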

\begin{proof} Note that $W$ is the sum of generalized eigenspaces $E_\lambda:=\ker (A-\lambda)^{n_\lambda}$, where $n_\lambda$ is the largest size of a Jordan block of $A$ with eigenvalue $\lambda$. Hence by $(\ref{arbd})$ $$h_{\Ar}(E_\lambda) \leq d h((A-\lambda)^{n_\lambda}) \leq dn_\lambda (h(A)+h(\lambda)+\log 2)$$ and summing over $\lambda$ yields the result.
\end{proof}

\subsection{Height Gap}

In \cite{breuillard-annals} the second named author introduced the notion of normalized height of a finite set of matrices and proved a uniform lower bound (height gap theorem) valid for generating sets of non-virtually solvable subgroups. We now briefly recall these results. Given a finite subset $S\subset \GL_d(\QQ^{\al})$ we define its height as:
$$h(S):=\frac{1}{[K:\QQ]}\sum_{v \in V_K} n_v \log^+ \max_{A \in S}\|A\|_v\,\,.$$
In particular if $S$ consists of a single matrix $A$, then $h(S)$ coincides with the height of the matrix $A$ introduced in \S \ref{hmatrix}. We define the normalized height of $S$ as:
$$\widehat{h}(S):=\lim_{n \to +\infty} \frac{1}{n}h(S^n).$$
The limit exists and as shown in \cite{breuillard-annals}, $$\widehat{h}(S^n)=n\widehat{h}(S)$$ for every integer $n\ge 1$. The normalized height is invariant under conjugation. For future reference, note that if $g \in \SL_d(\QQ^{\al})$ and $\widehat{h}(g)>0$ then $g$ has at least two distinct eigenvalues (this follows, e.g., since $\widehat{h}(g)$ is equal to the Weil height of the projective point $[1:\lambda_1:\cdots:\lambda_d]$).

One may also view the normalized height as a sum of local contributions at each place. Indeed:
$$\widehat{h}(S)= \frac{1}{[K:\QQ]}\sum_{v \in V_K} n_v \log^+ R_v(S)$$
where $R_v(S)$ is the \emph{joint spectral radius} of $S$ at the place $v$, namely:
$$R_v(S)=\lim _{n \to +\infty} \|S^n\|_v^{1/n}$$
where we denote $\|Q\|_v=\max_{A \in Q} \|A\|_v$ for a subset $Q \subset M_d(K_v)$. 

A key feature of the joint spectral radius $R_v(S)$ is the \emph{Bochi inequality}. This says that, when $v$ is archimedean, 
\begin{equation}\label{bochi1} \frac{1}{2} R_v(S) \leq  \max_{k \leq 16d^4} \Lambda_v(S^k)^{1/k} \leq R_v(S)\end{equation}
where, for $Q \subset M_d(K_v)$ we denote by $\Lambda_v(Q)$ is the maximum modulus of an eigenvalue of an element of $Q$. When $v$ is non-archimedean, on the other hand, one has:
\begin{equation}\label{bochi2} R_v(S) = \max_{k \leq d^2} \Lambda_v(S^k)^{1/k}.\end{equation}
We refer the reader to \cite[Theorem 5 and 7]{breuillard-joint} for these facts.

In the proof of our main results we will make key use of the \emph{Height gap theorem} from \cite{breuillard-annals}. This asserts (see \cite[Theorem 3.1]{breuillard-annals}) that there is a constant $gap_d>0$ (the \emph{height gap}) such that 
\begin{equation}\label{heightgap}\widehat{h}(S)\geq gap_d\end{equation}
as soon as $S$ generates a non-virtually solvable subgroup,
and thus whenever $S$ generates a subgroup acting strongly irreducibly on
$(\QQ^{\al})^d$.
Another essential result is the \emph{quasi-symmetrisation bound} of  \cite[Proposition 1.1]{breuillard-annals}: there is a constant $\kappa_d>0$ such that 
\begin{equation}\label{qs}\widehat{h}(S) \leq \inf_{g \in \GL_d(\QQ^{\al})} h(gSg^{-1}) \leq \kappa_d \widehat{h}(S)\end{equation}
provided the Zariski-closure of the subgroup $\langle S \rangle$ is semisimple. In fact, the proof of \cite[Proposition 1.1]{breuillard-annals} gives $\kappa'_d>0$ such that $ \inf_{g \in \GL_d(\QQ^{\al})} h(gSg^{-1}) \leq \kappa'_d (1+ \widehat{h}(S))$, so we may take \begin{equation}\label{qscons}\kappa_d=\kappa'_d(1+\gap_d^{-1}).\end{equation}

\smallskip

\noindent \emph{Remark.} In \cite{breuillard-annals} we did not give an explicit bound on $gap_d$ and $\kappa'_d$, but with the help of \cite{breuillard-joint} it is now possible to do so by carefully going through all estimates in the proofs of \cite{breuillard-annals}. We plan to address this point in the near furture. See also \cite{hurtado-et-al} for another proof of \eqref{heightgap} and a discussion of explicit bounds on $gap_d$. 

\subsection{Positive characteristic}
If $K$ is  now a global field of positive characteristic (i.e. a finite extension of the field of Laurent series with coefficients in a finite field), then the height function $h(S)$ on matrices and its normalized version $\widehat{h}(S)$ can be defined in the same way as above. The main difference is that all places are now non-archimedean and this simplifies some estimates. For example, the height is then genuinely subadditive: \eqref{suba1} and \eqref{suba2} hold without 
the $\log 2$ term. The product formula \ref{prodform} continues to hold as well as all the lemmas above. Lemma \ref{ainv} holds without the $\log2$ term, and \eqref{bochi2} holds. The key difference however is that there is no height gap in positive characteristic, see the example below. But crucially, the quasi-symmetrisation bound does hold verbatim, namely:
\begin{equation}\label{qspc}\widehat{h}(S) \leq \inf_{g \in \GL_d(K^{\al})} h(gSg^{-1}) \leq \kappa'_d \, \widehat{h}(S)\end{equation}
provided the Zariski-closure of the subgroup $\langle S \rangle$ is semisimple. The constant $\kappa'_d$ is the same as in \eqref{qscons} and the proof given in \cite{breuillard-annals} holds verbatim.

\smallskip
\noindent \emph{Example. }(Failure of height gap in positive characteristic) Let $K=\FF_q(t)$ and let $S=\{u,u^T\}$ where $u$ is the unipotent in $\SL_2(K^{al})$ given by $u(e_1)=e_1$, $u(e_2)=t^{\frac{1}{n}}e_1+e_2$. Then $S$ generates a Zariski-dense subgroup of $\SL_2(K^{al})$. However $\widehat{h}(S)\leq h(S)=\frac{h(t)}{n}$. The last inequality results from the fact that, because there are only non-archimedean places, $\max_{i,j}h(A_{ij}) \leq h(A)\leq \sum_{i,j}h(A_{ij})$ for any matrix $A$, and $h(t^{\frac{1}{n}})=\frac{1}{n}h(t)$.

\section{Proof of the uniform spectral gap}\label{mainproofs}
The main technical statement we will prove is the following:

\begin{thm} \label{locgap} Let $r,d \ge 2$. There is $N=N(d,r)\in \NN$ such that the following holds. Let $K$ be a number field and $S \subset \GL_d(K)$ be a finite symmetric set containing the identity and such that the subgroup $\langle S \rangle$ it generates acts strongly irreducibly on $(K^{\al})^d$. Then there is a completion $k=K_v$ of $K$ and a  subset $F \subset S^N$ with $|F|=r$ in $d$-ping-pong position on $X=\PP(k^d)$.
\end{thm}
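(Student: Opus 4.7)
The plan is to follow the adelic strategy sketched in the introduction overview. After two standard reductions, the heart of the argument identifies a single place $v$ of $K$ where the ``ping-pong with overlaps'' hypotheses of Proposition \ref{prop} can be realised simultaneously.

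First I would reduce to a bounded-size generating set via Lemma \ref{sizereduce}, and then apply the quasi-symmetrisation bound \eqref{qs} to conjugate $S$ by an element $g \in \GL_d(\QQ^{\al})$ so that
$$h(gSg^{-1}) \le \kappa_d \, \widehat{h}(S) + O_d(1).$$
Both reductions are harmless. The first is clear. For the second, if one finds $F \subset (gSg^{-1})^N$ in $d$-ping-pong position on $\PP(k'^d)$ for a completion $k'$ of $K(g)$, then $g^{-1}Fg \subset S^N$ acts on the subset $\PP(K_v^d) \subset \PP(k'^d)$ (where $v$ lies below the chosen place of $K(g)$), and the ping-pong hypotheses (i) and (ii) restrict verbatim to $\PP(K_v^d)$ with the intersected attracting/repelling sets. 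So I may assume $h(S) \ll_d \widehat{h}(S) + 1$ from the outset.

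Next I would introduce a global ``transversality height''
$$t \;=\; \frac{1}{[K:\QQ]}\sum_{v\in V_K} n_v \, t_v,$$
where $t_v \ge 0$ collects the logarithms of the reciprocals of the local projective distances $\delta_v$ between pairs of intersections of subspaces of the form $g_i A_\gamma$, $g_i R_\gamma$, ranging over all words $\gamma$ in $S$ of length at most $16d^4$, all thresholds $\omega$, and all candidate conjugators $g_i$ from $S^{M(d,r)}$. Lemma \ref{ainv} bounds the Arakelov heights of these $\gamma$-invariant subspaces by $O_d(h(S))$, and Lemma \ref{delar} together with \eqref{trans} bounds the corresponding Arakelov distances by a constant multiple of the same quantity. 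These inputs combine to give the fundamental estimate
$$t \;\le\; C_1(d,r)\, h(S) + C_2(d,r). \qquad(\ast)$$
It is essential here that Lemma \ref{intersec} carries constant $1$ at non-archimedean places, so that the adelic sum stays finite with no error growing in the number of places.

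Combining $(\ast)$ with the previous step and the Height Gap theorem \eqref{heightgap} yields
$$t + h(S) \;\ll_{d,r}\; \widehat{h}(S) \;=\; \frac{1}{[K:\QQ]}\sum_{v\in V_K} n_v \log^+ R_v(S).$$
A weighted pigeonhole over places then produces a place $v_0$ with
$$t_{v_0} + \log^+ \|S\|_{v_0} \;\le\; C'_{d,r}\, \log^+ R_{v_0}(S).$$
In particular $R_{v_0}(S) > \|S\|_{v_0}$, so the effective Bochi inequality \eqref{bochi1}--\eqref{bochi2} yields a word $\gamma \in S^k$ of length $k \le 16 d^4$ with $\Lambda_{v_0}(\gamma)^{1/k} \ge \tfrac{1}{2}R_{v_0}(S)$. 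Hence $\gamma$ has at least two eigenvalues of distinct modulus at $v_0$, and one may choose a threshold $\omega$ giving $\alpha_\omega(\gamma) \le c_d < 1$. Strong irreducibility of $\langle S\rangle$ ensures that $A_\gamma$ and $R_\gamma$ contain no nonzero $\langle S\rangle$-invariant subspace, so Lemma \ref{wgp} produces $g_1,\dots,g_r \in S^{M(d,r)}$ placing $\{g_i A_\gamma\}$ and $\{g_iR_\gamma\}$ in weak general position. The control on $t_{v_0}$ translates, via the definition of $t_v$, into lower bounds on the separation $\delta$ and upper bounds on the conjugate norm $\Delta$ of Proposition \ref{prop} depending only on $d$ and $r$. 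Consequently \eqref{ppcoco} is satisfied for some $n=n(d,r)$, and Proposition \ref{prop} produces the required $d$-ping-pong position for $F = \{g_i\gamma^n g_i^{-1}\}_{i=1}^{r} \subset S^{N}$ with $N = 2 M(d,r) + nk$.

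The main obstacle is the global inequality $(\ast)$. It requires simultaneous Arakelov-theoretic control of a large but bounded family of subspaces (generalised eigenspaces of short words, conjugated by other short words), uniformly over completely arbitrary number fields $K$. The sharpness of the non-archimedean case of Lemma \ref{intersec} and the polynomial dependence of heights of invariant subspaces on $h(S)$ (Lemma \ref{ainv}) are precisely the tools designed to make this estimate uniform. The remainder --- Bochi, Height Gap, quasi-symmetrisation and weak general position --- is standard machinery bundled together by the pigeonhole argument, yielding the fact that the chosen place $v_0$ depends on $S$ but the final $N(d,r)$ does not.
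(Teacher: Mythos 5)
Your overall architecture matches the paper's: reduce to bounded $|S|$, conjugate via quasi-symmetrisation \eqref{qs}, bound the global transversality height $t$ by $O_{d,r}(h(S))$ using Lemmas~\ref{delar} and \ref{ainv}, combine with the Height Gap to get $t + h(S) \ll_{d,r} \widehat{h}(S)$, and find a place where the local data are controlled. But the pigeonhole step, as you state it, has a genuine gap.

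You claim a weighted pigeonhole produces a place $v_0$ with $t_{v_0} + \log^+\|S\|_{v_0} \le C'_{d,r}\,\log^+ R_{v_0}(S)$, and conclude ``$\alpha_\omega(\gamma) \le c_d < 1$.'' This last assertion is unjustified when $v_0$ is archimedean. The conclusion you can draw from your inequality is only $R_{v_0}(S) > 1$ (incidentally, you wrote $R_{v_0}(S) > \|S\|_{v_0}$, which is never true since $R_v \le \|\cdot\|_v$). But $R_{v_0}(S)$ could be arbitrarily close to $1$, in which case the eigenvalue gap of the word $\gamma$ produced by Bochi is also arbitrarily close to $1$, so $\alpha_\omega(\gamma)$ can be as close to $1$ as you like. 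Since Proposition~\ref{prop} requires $(C_k^2\alpha_\omega(\gamma))^n$ to be small, and $C_k = d$ at an archimedean place, you need $\alpha_\omega(\gamma) < d^{-2}$ --- an absolute bound --- not merely $\alpha_\omega(\gamma) < 1$. Your pigeonhole provides no control on the \emph{absolute} size of $e_{v_0} = \log^+ R_{v_0}(S)$, only on the ratio $t_{v_0}/e_{v_0}$, so condition~\eqref{ppcoco} can fail at $v_0$ for every $n$.

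The paper fixes this with Lemma~\ref{combl}, which is a sharper dichotomy than the naive pigeonhole: either there is a \emph{finite} place with $e_v > s_v/(2C)$ (where $C_k = 1$ and any eigenvalue gap $>1$ works), or there is an \emph{infinite} place with $e_v \ge s_v/(2C) + e/2$. The extra additive term $+e/2$ is the point: combined with the Height Gap $e = \widehat{h}(S) \ge gap_d$, it gives $e_{v_0} \ge gap_d/2$, an absolute lower bound on the spectral escape at the chosen archimedean place. This is what permits replacing $S$ by a bounded power $S^{n_d}$ (with $n_d$ as in~\eqref{nd} depending only on $d$ and $gap_d$) to force $\Lambda_{v_0}(\gamma)^{1/(d-1)} \ge d^4$, hence $C_k^2\alpha_\omega(\gamma) = d^2\alpha_\omega(\gamma) \le d^{-2} < 1$. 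Without the refined dichotomy, the archimedean case does not close, and since one cannot choose the place to be non-archimedean in advance, the proof is incomplete as written.
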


Recall that strongly irreducible means that $\langle S \rangle$ does not preserve a finite union of proper subspaces, or equivalently that the connected component $\GG$ of the Zariski-closure of  $\langle S \rangle$ acts irreducibly.  Note then that $\GG$ must be reductive for it must preserve the fixed point set of its unipotent radical, which is therefore trivial. Also its image in $\PGL_d$ is semisimple. 

We stick to number fields for now and will explain at the end of this section how to deal with global fields of positive characteristic.

We will obtain an explicit bound on $N(d,r)$ in terms of the height gap $gap_d$ (assumed in $(0,1)$) and quasi-symmetrisation constant $\kappa'_d$ from \eqref{heightgap}, \eqref{qs}and \eqref{qscons}:

\begin{equation}\label{explicitN} N(d,r) \leq \kappa'_d(1+2gap_d^{-1}) 4^{4rd} e^{(4d)^{5}(\log d)^2 gap_d^{-1}} \leq \kappa'_d(1+2gap_d^{-1}) \exp(O(rd+d^6 gap_d^{-1})). \end{equation}

Note that we may always pass to a finite extention of $K$ without loss of generality, because if $F\subset \GL_d(k)$ is in $d$-ping-pong position on $\PP(k'^d)$ for some extension field $k'$, then it is obviously also in $d$-ping-pong position on $\PP(k^d)$ using the regions obtained by intersecting $\A_\gamma,\R_\gamma$ with $\PP(k^d)$. 
Similarly, considering the subgroup generated by the $\overline{s}$, $s \in S$ where $\overline{s}=(\det s)^{-1/d}s$, we obtain a subgroup of $\SL_d(K')$ for some finite extension $K'$ of $K$ with the same image in $\PGL_d(K)$ as $\langle S \rangle$ and which is again strongly irreducible. So we may assume without loss of generality that $S \subset \SL_d(K)$.

We pass to the proof of Theorem \ref{locgap} and first describe its outline. The strategy is to work our way to put ourselves in a situation where we can apply Proposition \ref{prop} at a suitably chosen place $v\in V_K$ and for suitably chosen elements $\gamma$ and $g_i$. Candidates for $\gamma$ will be drawn from  all possible elements in a bounded power $S^L$ of the generating set $S$ with $L$ bounded in terms of $d$ only. Setting $\gamma_i=g_i\gamma g_i^{-1}$, Proposition \ref{prop} guarantees that the family $\{\gamma_1^n,\ldots, \gamma_r^n\}$ will be in $d$-ping-pong position provided $n$ is larger than a bound that depends on  the maximal squared norm $\Delta$ of $\gamma$ and the $g_i$, on the largest eigenvalue gap $\alpha_\omega(\gamma)^{-1}>1$ and  on the \emph{separation} $\delta$, that is on the projective distance between the subspaces $g_iA_\gamma$ and $g_iR_\gamma$ and their intersections: \begin{equation}\label{ultimatebd}n > \frac{(2+2\lceil \log_2 d \rceil)\log \delta^{-1} + \log C_k^2\Delta^{3(d-2)}}{|\log (C_k^{-2} \alpha_\omega(\gamma)^{-1})|}.\end{equation} We want to find a $\gamma$ and $g_i$ for which the right hand side of \eqref{ultimatebd} is bounded in terms of $d$ and $r$ only. To achieve the maximal possible eigenvalue gap, we may pick $\gamma$ so that its top eigenvalue is as large as possible, which by the Bochi inequality \eqref{bochi1} can be taken to be of size comparable to the joint spectral radius of $S$ for some $\gamma$ of word length at most $(2d)^4$. The problem is that there is no guarantee that the separation $\delta$ locally at that place for this choice of $\gamma$ be controlled at all.  Only when taking into account all places globally can we hope to bound the separation in terms of the height of the generating set $S$. The main idea is then to sum over all places and upper bound the sum $s$ of all the quantities involved (norms of elements of $S$ and separation) by the normalized height $\widehat{h}$ (or $e$ in the lemma below) of $S$, making  crucial use of the height gap theorem \eqref{heightgap} and the quasi-symmetrixation inequality \eqref{qs} at this step.  The chosen place $v$ will then be either archimedean or non-archimedean. Its existence will be established thanks to the following simple combinatorial observation:


\begin{lem}\label{combl} Suppose $K$ is a number field with set of places $V_K$ and suppose that we are given non-negative quantities $e_v$ and $s_v$ for each $v \in V_K$. We set 
$$e=\frac{1}{[K:\QQ]}\sum_{v \in V_K} n_v e_v$$
and $e_\infty$ (resp. $e_f$) for the contribution of the infinite places (resp. finite places) in this sum, so that $e=e_\infty+e_f$. We define $s,s_\infty$ and $s_f$ similarly. Now assume that 
$$s \leq C e$$
for some $C>0$, then one of the two options holds:
\begin{enumerate}
\item there is $v \in V_f$ with $e_v > \frac{1}{2C} s_v$, or
\item
there is $v \in V_\infty$ with $e_v \ge \frac{s_v}{2C}+\frac{e}{2}$. 
\end{enumerate}

\end{lem}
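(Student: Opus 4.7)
The plan is to argue by contradiction. Suppose both conclusions fail. Then for every finite place $v$ one has $e_v \leq \frac{1}{2C} s_v$, and for every infinite place $v$ one has the strict inequality $e_v < \frac{s_v}{2C} + \frac{e}{2}$. The idea is simply to multiply each of these local inequalities by the weight $n_v/[K:\QQ]$, sum over the relevant class of places, and combine the two resulting bounds.

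Summing the finite-place bounds yields $e_f \leq s_f/(2C)$. For the infinite places, the key identity is $\sum_{v \in V_\infty} n_v = [K:\QQ]$, which follows from the decomposition $K \otimes_\QQ \RR \cong \prod_{v \mid \infty} K_v$; this is precisely what makes the constant contribution $e/2$ average back to $e/2$ after weighting, giving $e_\infty < s_\infty/(2C) + e/2$. The strict inequality survives the averaging because $V_\infty$ is non-empty for every number field.

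Adding the two bounds gives $e = e_f + e_\infty < s/(2C) + e/2$, and invoking the hypothesis $s \leq Ce$ yields $e < e/2 + e/2 = e$, a contradiction. The case $e = 0$ (where $s = 0$ forces $s_v = 0$ everywhere, so (2) holds trivially at any infinite $v$) is swept up by the same argument, since then the strict inequality $e_v < 0$ is already impossible.

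There is essentially no obstacle: the lemma is an elementary averaging statement. The only point worth flagging is the asymmetry between finite and infinite places, which is explained precisely by the identity $\sum_{v \mid \infty} n_v = [K:\QQ]$ that allows the additive constant $e/2$ to appear only on the archimedean side without inflating the total weight.
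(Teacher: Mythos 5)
Your proof is correct and follows essentially the same route as the paper's: you weight the local inequalities, sum over finite and infinite places separately, invoke $\sum_{v\mid\infty} n_v = [K:\QQ]$ to control the archimedean contribution, and combine with $s \le Ce$. The only cosmetic difference is that you argue by contradiction from the negation of both options, whereas the paper derives $e_\infty \ge s_\infty/(2C) + e/2$ directly under the negation of option (1) and then concludes by averaging over $V_\infty$; the underlying computation is identical.
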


\begin{proof}
Suppose option $(1)$ does not hold. Then, for all $v \in V_f$ we have $s_v \geq 2C e_v$, hence $s_f \geq 2C e_f$. By assumption $s\leq Ce$, so $s_\infty+s_f \leq Ce_\infty + Ce_f$. Hence $s_\infty +2 Ce_f \leq C e_\infty + Ce_f$, which means that $s_\infty + C e_f \leq C e_\infty$. In other words $s_\infty \leq Ce_\infty$ and $s_\infty + C (e-e_\infty) \leq C e_\infty$. So $$e_\infty \geq s_\infty/2C+e/2=\frac{1}{[K:\QQ]} \sum_{v \in V_\infty} n_v (\frac{s_v}{2C}+\frac{e}{2}),$$ because $\sum_{v \in V_\infty} n_v = [K:\QQ]$. The left and right hand sides are both averages over all infinite places, so there must be at least one $v \in V_\infty$ for which $e_v \ge \frac{s_v}{2C}+\frac{e}{2}$.
\end{proof}

We now pass to the details of the proof of Theorem \ref{locgap}, assuming as we may that $|S|\leq 2d^2-3$.  As a first step we note that, according to $(\ref{qs})$,  we may assume, up to conjugating $S$ by a suitable element of $\GL_d(\QQ^{\al})$, that 
\begin{equation}\label{hbd}h(S) \leq \kappa_d \, \widehat{h}(S).\end{equation}

Given $\gamma \in \GL_d(\QQ^{\al})$, we will say that a non-zero proper subspace $W\leq (\QQ^{\al})^d$ is \emph{$\gamma$-admissible}, if it is a sum of generalized eigenspaces of $\gamma$ corresponding to eigenvalues that are not all roots of unity. Note that since $\gamma$ has at most $d$ distinct eigenvalues, it has at most $2^d$ distinct $\gamma$-admissible subspaces. If $W$ is $\gamma$-admissible, we let $W'$ be the complement of $W$, which is the sum of the generalized eigenspaces associated to the remaining eigenvalues of $\gamma$. We let $\mathcal{A}_\gamma$ be the family of  $\gamma$-admissible subspaces. 

If $W$ and $W'$ are both admissible, we may apply Lemma \ref{wgp} to them setting $H^+=W$ and $H^-=W'$. This yields $g_{1,W},\ldots, g_{r,W} \in S^{M}$, $M=M(d,r)$ such that each family $\mathcal{F}_W=\{g_{1,W}W,\ldots,g_{r,W}W\}$ and  $\mathcal{F}_{W'}=\{g_{1,W}W',\ldots,g_{r,W}W'\}$ is made of $r$ distinct subspaces  in weak general position.  As announced earlier, we are going to show that, provided $m$ is larger than a certain quantity that depends only on $d$, there is a place $v\in V_K$, an element $\gamma \in S^{m}$ and an admissible subspace $W \in \mathcal{A}_\gamma$ such that $\{\gamma_{1,W}^n,\ldots,\gamma_{r,W}^n\}$ is in $d$-ping-pong type position for every large enough $n$ (depending only on $d,r$), where $\gamma_{i,W}=g_{i,W}\gamma g_{i,W}^{-1}$.

Without loss of generality we may enlarge the number field $K$ so as to guarantee that the characteristic polynomial of each $\gamma \in S^{m}$ splits over $K$. This does not affect the statement of Theorem \ref{locgap} and allows us to assume that the $\gamma$-admissible subspaces are defined over $K$. For a place $v \in V_K$ we denote by $\delta_{W,v}(\gamma)$ the quantity denoted by $\delta$ in Proposition \ref{prop}, namely:
\begin{equation}\label{deltadef}\delta_{W,v}(\gamma)=\min_{I, J \subset [r]} \{\delta_v(W,W'),\delta_v(W_I,W_J), \delta_v(W'_I,W'_J)\},\end{equation} where $W_I=\cap_{i \in I} g_{i,W}W$ and $W'_I=\cap_{i \in I} g_{i,W}W'$.
Then we set: 
$$t_v(\gamma):=\sum_{W \in \mathcal{A}_\gamma} \log  \delta_{W,v}^{-1}(\gamma)$$
with the convention $t_v(\gamma)=0$ if $\mathcal{A}_\gamma$ is empty. We also set  \begin{equation}s_v:=2(M+m)\log \|S\|_v + \sum_{\gamma \in S^{m}} t_v(\gamma)\, \textnormal { and }
\,\,\,e_v:= \log R_v(S)\end{equation} and let $t,s,e$ be the associated weighted sum over all places; note that $e=\widehat{h}$.

\begin{lem}\label{sbound} We have $s\leq C(m) \widehat{h}(S)$, where 
 \begin{equation}\label{Cm}C(m):=(2M+2m+|S|^m C_2)\kappa_d+|S|^m C_1/gap_d.\end{equation}  and $C_1=d^42^d 4^{r+1}\log 2$ and $C_2= d^42^d 4^{r+2}(M+m)$.
\end{lem}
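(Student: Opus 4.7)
The plan is to decompose $s$ into the obvious two contributions and bound each separately. The first part, $s^{(1)}:=\frac{2(M+m)}{[K:\QQ]}\sum_v n_v\log\|S\|_v = 2(M+m)h(S)$ (using $1\in S$ so that $\|S\|_v\geq 1$ and $\log^+=\log$), is disposed of immediately via the quasi-symmetrisation bound \eqref{hbd}: $s^{(1)}\leq 2(M+m)\kappa_d\widehat h(S)$. All the real work goes into the $t$-part $s^{(2)}:=\frac{1}{[K:\QQ]}\sum_v n_v\sum_{\gamma\in S^m}t_v(\gamma)$.

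First I would swap the order of summation and fix $\gamma\in S^m$ and a $\gamma$-admissible $W$. Since $\delta_v(\cdot,\cdot)\in(0,1]$, the quantity $\log\delta_{W,v}^{-1}(\gamma)$ is a \emph{max} of non-negative terms, hence bounded by their \emph{sum}:
\[
\log\delta_{W,v}^{-1}(\gamma)\leq \log\delta_v^{-1}(W,W')+\sum_{I,J\subset[r]}\bigl(\log\delta_v^{-1}(W_I,W_J)+\log\delta_v^{-1}(W'_I,W'_J)\bigr).
\]
Summing these non-negative local contributions over $v$ with weights $n_v/[K:\QQ]$ turns each term into an Arakelov distance $\delta_{\Ar}$. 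I then apply Lemma~\ref{delar} to get $\delta_{\Ar}(V_1,V_2)\leq d\max\{h_{\Ar}(V_1),h_{\Ar}(V_2)\}$, reducing everything to the control of Arakelov heights of the various subspaces involved.

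The heights are estimated in three steps. Since each admissible $W$ (and its complement $W'$) is $\gamma$-invariant and $h(\gamma)\leq m\,h(S)$, Lemma~\ref{ainv} gives $h_{\Ar}(W),h_{\Ar}(W')\leq d^2(2m\,h(S)+\log 2)$. For the translates, the identity $\Lambda^k(g)\vv_1\wedge\cdots\wedge\vv_k=g\vv_1\wedge\cdots\wedge g\vv_k$ combined with $\|\Lambda^k g\|_v\leq\|g\|_v^k$ yields $h_{\Ar}(g_{i,W}W)\leq h_{\Ar}(W)+d\cdot h(g_{i,W})\leq h_{\Ar}(W)+dM\,h(S)$, using $g_{i,W}\in S^M$. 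Submodularity (Lemma~\ref{submod}) then bounds intersections: $h_{\Ar}(W_I)\leq\sum_{i\in I}h_{\Ar}(g_{i,W}W)\leq r(h_{\Ar}(W)+dM\,h(S))$, and similarly for $W'_I$. Altogether $h_{\Ar}(W_I)\leq r\bigl[(2d^2m+dM)h(S)+d^2\log 2\bigr]$.

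Finally I would count: at most $2^d$ admissible subspaces $W$ per $\gamma$, at most $4^r$ pairs $(I,J)$ in each of the two sums, and at most $|S|^m$ elements $\gamma\in S^m$. Putting these together with the Lemma~\ref{delar} bound gives
\[
s^{(2)}\leq |S|^m\cdot 2^d\cdot 4^{r+1}\cdot d\cdot r\bigl[(2d^2m+dM)h(S)+d^2\log 2\bigr],
\]
which after coarse estimates (e.g.\ $2d^2m+dM\leq 2d^2(M+m)$, absorbing the factor $r$ into $d$ as $r\leq 2d^2$ is implicit, or keeping $r$ in the exponent of $4$) is bounded by $|S|^m\bigl(C_2\,h(S)+C_1\bigr)$ with $C_1,C_2$ as in the statement. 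Combining $s=s^{(1)}+s^{(2)}$, and using $h(S)\leq\kappa_d\widehat h(S)$ for the $h(S)$-terms and the height gap $1\leq\widehat h(S)/\gap_d$ to absorb the additive $C_1$-constants, yields the claimed inequality $s\leq C(m)\widehat h(S)$. The only non-routine aspect is the passage from local $\log\delta_v^{-1}$ (a maximum) to a sum so that the adelic sum becomes a sum of Arakelov quantities amenable to Lemma~\ref{delar}; everything else is bookkeeping that exploits the multiplicativity of heights under $\GL_d$-translation and the submodularity of $h_{\Ar}$.
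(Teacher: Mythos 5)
Your decomposition and overall strategy (bound the local $\log\delta_{W,v}^{-1}$ maximum by a sum, pass to Arakelov distances, invoke Lemma~\ref{delar}, submodularity and Lemma~\ref{ainv}) match the paper's proof. Your route to bounding $h_{\Ar}(g_{i,W}W)$ differs slightly: you bound $h_{\Ar}(W)$ via $\gamma$-invariance of $W$ and then add the translation cost $d\cdot h(g_{i,W})$ from the wedge estimate, whereas the paper observes directly that $g_{i,W}W$ is invariant under $g_{i,W}\gamma g_{i,W}^{-1}\in S^{2M+m}$ and applies Lemma~\ref{ainv} once. Both are valid, and yours actually gives a slightly sharper intermediate constant.

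However, there is a bookkeeping gap: you bound $h_{\Ar}(W_I)\leq |I|\max_i h_{\Ar}(g_{i,W}W)$ and use $|I|\leq r$, giving a spurious extra factor $r$ that does not match the stated $C_1,C_2$. You notice the mismatch, but your proposed fix ($r\leq 2d^2$) is not correct in general — in the application $r=16D$ with $D\leq 2^{d^2}$, which vastly exceeds $2d^2$. The right observation is that the $g_{i,W}W$ (and $g_{i,W}W'$) are in \emph{weak general position} by the choice of the $g_{i,W}$ via Lemma~\ref{wgp}, so $W_I\neq 0$ forces $|I|<d$ (and $\delta_v(W_I,W_J)=1$ when $W_I=0$, so those terms contribute nothing). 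Thus $|I|$ may be replaced by $d$, giving the factor $d^2$ in $\delta_{\Ar}(W_I,W_J)\leq d^2\max_i h_{\Ar}(g_{i,W}W)$ exactly as in the paper, and the stated constants then follow.
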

\begin{proof}
Taking logs in \eqref{deltadef} and summing over all places, we get:
$$\frac{1}{[K:\QQ]} \sum_{v \in V_K} n_v \log  \delta^{-1}_{W,v}(\gamma) \leq \delta_{Ar}(W,W')+\sum_{I,J \subset [r]} \delta_{Ar}(W_I,W_J)+ \delta_{Ar}(W'_I,W'_J).$$
Applying Lemmas \ref{delar} and \ref{submod} we note that $$\delta_{Ar}(W_I,W_J) \leq d\max_{I \subset [r]} h_{Ar}(W_I) \leq d^2\max_{1\leq i \leq r} h_{Ar}(g_{i,W}W).$$ This yields:
$$t(\gamma) \leq 2^dd^2(1+2^{2r+1}) \max_{W \in \mathcal{A}_\gamma}\max_{g \in S^{M(d,r)}} h_{Ar}(gW).$$
Noting that $gW$ is invariant under $g\gamma g^{-1} \in S^{2M+m}$, Lemma \ref{ainv} yields:
$$t(\gamma) \leq d^42^d 4^{r+1}(2h(S^{2M+m})+\log2) \leq C_1+C_2h(S).$$ Recall from \eqref{heightgap} and \eqref{hbd} that $\widehat{h}(S) \ge gap_d$ and $h(S) \leq \kappa_d\widehat{h}(S)$.  Combining the above inequalities we get $s\leq C(m)e$ as desired.
\end{proof}


We can now apply  Lemma \ref{combl}, to conclude that two options can hold:
\smallskip


\noindent {\bf Option 1:}  there is a finite place $v$ of $K$ where $s_v<C(m)e_v$. Then, in particular $R_v(S)>1$ and it then follows from \eqref{bochi2} that there is $\gamma \in S^{m}$ such that $\Lambda(\gamma)_v \ge R_v(S) > 1$, provided $m\ge d^2$. The largest eigenvalue gap between the modulus of two eigenvalues of $\gamma$ is at least $(\Lambda(\gamma)_v \Lambda(\gamma^{-1})_v)^{1/d-1}$, which is at least $\Lambda(\gamma)_v^{1/d-1}$, as  $\det(\gamma)=1$. Therefore there is a cursor $\omega >0$ such that $\alpha_\omega(\gamma)\leq \Lambda(\gamma)_v^{-\frac{1}{d-1}}$ in the notation of Proposition \ref{prop} for the local field $k=K_v$. Since $v$ is finite, $k$ is non-archimedean and thus $C_k=1$. In particular:
$$C_k^2\alpha_\omega(\gamma) \leq \Lambda(\gamma)_v^{-\frac{1}{d-1}}<1.$$
On the other hand, $A_\gamma$ and $R_\gamma$ are given by one $\gamma$-admissible subspace $W$ and its complement $W'$, which is also $\gamma$-admissible, i.e. $A_\gamma=W_v$ and $R_\gamma=W'_v$, and thus
$$\|S\|_v^{2(M+m)}\delta_{W,v}(\gamma)^{-1} \leq \|S\|_v^{2(M+m)} e^{t_v(\gamma)}\leq e^{s_v} < e^{C(m) e_v} \leq \Lambda(\gamma)_v^{C(m)}.$$ In particular, as $2+2\lceil \log_2 d\rceil \leq 2+2d\leq 3d$, and $\Delta \leq \|S\|_v^{2(M+m)}$
$$C_k^2\Delta^{3(d-2)} \delta^{-1}_{v,W}(\gamma)^{2+2\lceil \log_2 d\rceil}  < \Lambda(\gamma)_v^{3dC(m)} \leq (C_k^2 \alpha_\omega(\gamma))^{-3d(d-1)C(m)}.$$This means that the condition $(\ref{ppcoco})$ for ping-pong from Proposition \ref{prop} is satisfied whenever $n\ge 3d(d-1)C(m)$. 
\smallskip

\noindent {\bf Option 2:}  there is an infinite place $v$ of $K$ such that $e_v \geq s_v/2C(m) +\widehat{h}(S)/2$. 
 In particular, 
$e_v \ge \widehat{h}(S)/2 \ge gap_d/2$. Let $n_d$ be the smallest integer greater or equal to \begin{equation}\label{nd}1+2gap_d^{-1}(4(d-1)\log d + \log 2).\end{equation} We have $$R_v(S^{n_d})=R_v(S)^{n_d}\geq e^{(n_d-1)gap_d/2}R_v(S) \ge 2 d^{4(d-1)} R_v(S).$$ Similarly as above but using \eqref{bochi1} this time, there is $\gamma \in S^{(2d)^4n_d}$ such that $\Lambda(\gamma)_v \ge \frac{1}{2}R_v(S^{n_d}) \ge d^{4(d-1)} R_v(S) > R_v(S)=e^{e_v}$. The largest eigenvalue gap is at least $\Lambda(\gamma)_v^{1/(d-1)}\ge d^4$ and there is a cursor $\omega>0$ for which $\alpha_\omega(\gamma) \leq \Lambda(\gamma)_v^{-1/(d-1)}$. Then  
$$C_k^2\alpha_\omega(\gamma)\leq d^2\Lambda(\gamma)_v^{-1/(d-1)} \leq \Lambda(\gamma)_v^{-1/2(d-1)}.$$

On the other hand $e_v \ge s_v/2C(m)$ and as earlier $A_\gamma$ and $R_\gamma$ are given by one $\gamma$-admissible subspace $W$ and its complement, so as $m \ge (2d)^4n_d$
$$\|S\|^{2(M+m)}_v\delta^{-1}_{W,v}(\gamma)\leq e^{s_v} \leq e^{2C(m)e_v} \leq \Lambda(\gamma)_v^{2C(m)}$$
So 
\begin{align*}C_k^2 \Delta^{3(d-2)} \delta_{v,W}^{-1}(\gamma)^{2+2\lceil \log_2 d\rceil} &\leq d^2e^{3ds_v} \leq d^2\Lambda(\gamma)_v^{6C(m)d}\\ &< \Lambda(\gamma)_v^{(6C(m)d+1/2(d-1))}\\
&\leq (C_k^2 \alpha_\omega(\gamma))^{-n}\end{align*}
provided $n/2(d-1) \ge 6C(m)d+1/2(d-1)$. Therefore condition $(\ref{ppcoco})$ from Proposition \ref{prop} holds for all $n\ge 12C(m)d(d-1)+1$.
\smallskip

\noindent {\bf Conclusion:} for a suitable $N=N(d,r)$, we have found a place $v$ of $K$ and $r$ elements in $S^N$ that are in $d$-ping-pong type position on $\PP(K_v^d)$. The above yields $$N\leq 2M+12mC(m)d^2,$$ where $M=M(d,r)$ is given in Lemma \ref{wgp},  $m=(2d)^4n_d$, where $n_d$ is given by \eqref{nd} and $C(m)$ (which also depends on $d$ and $r$) is defined in \eqref{Cm} in terms of  the height gap $gap_d$  from $(\ref{heightgap})$ and the constant $\kappa_d$ from \eqref{qs}. Putting these together one works out the bound \eqref{explicitN}. This completes the proof of Theorem \ref{locgap}.

\bigskip

\subsection{Positive characteristic} 
When $K$ is a global field of  positive characteristic, Theorem \ref{locgap} continues to hold verbatim. The proof also holds verbatim with even some simplifications: the height gap is no longer required since there are no archimedean places, and only the quasi-symmetrisation bounds \eqref{qspc} are needed. In particular, only option 1 occurs at the end of the proof given in the previous subsection.

 However, there is one minor complication that can arise in positive characteristic, which is not present in zero characteristic. In the proof of Theorem \ref{main1bis} given below in positive characteristic a version of Theorem \ref{locgap} is needed for certain non irreducible actions of semisimple algebraic groups $\GG$. We will say the action of $\GG$ on $(K^{\al})^d$  is \emph{$1$-indecomposable} if it is indecomposable with irreducible socle of codimension one (i.e. with a $\GG$-invariant irreducible hyperplane). We now claim:

\begin{thm} \label{locgappos} Let $r,d \ge 2$. There is $N=N_0(d,r)\in \NN$ such that the following holds. Let $K$ be a global field of positive characteristic and $\GG \leq  \GL_d$ a connected semisimple algebraic $K$-group acting either irreducibly or $1$-indecomposably on $(K^{\al})^d$. If $S$ is  a finite symmetric set containing the identity such that  $\langle S \rangle$ is Zarsiki-dense in $\GG$, then there is a completion $k=K_v$ of $K$ and a  subset $F \subset S^N$ with $|F|=r$ in $d$-ping-pong position on $X=\PP(k^d)$.
\end{thm}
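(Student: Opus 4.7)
The proof follows the architecture of Theorem \ref{locgap} with two modifications: a simplification for positive characteristic, and a reduction of the 1-indecomposable case to the irreducible case via the invariant hyperplane.

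First, for $\GG$ acting irreducibly, the proof of Theorem \ref{locgap} transports almost verbatim, with three simplifications. Since all places of $K$ are non-archimedean, the constant $C_{K_v}$ appearing in Proposition \ref{prop} equals $1$ throughout and the additive $\log 2$ terms in Lemma \ref{ainv} and in the subadditivity of heights vanish. Only Option 1 of Lemma \ref{combl} can arise, as there are no infinite places. Finally, while the height gap theorem \eqref{heightgap} is unavailable in positive characteristic, it is not needed here: the conclusion of Lemma \ref{sbound} can be re-derived from the sharper (archimedean-free) bound $t(\gamma) \leq C_2 h(S)$ together with the quasi-symmetrization inequality \eqref{qspc}, giving $s \leq C_0(d,r,m) \kappa'_d \cdot \widehat{h}(S)$ directly. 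The remainder of the argument applies unchanged and produces a constant $N_0^{\textnormal{irr}}(d, r)$.

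For the 1-indecomposable case, note that a connected semisimple algebraic group has no nontrivial character, so $\GG$ acts trivially on the one-dimensional quotient $K^d/V_0$; consequently $V_0$ is the unique proper nonzero $\GG$-invariant subspace of $K^d$ (any other invariant $W$ either contains the simple socle $V_0$ or embeds in the trivial module $K^d/V_0$, making $W$ a $\GG$-fixed line complementary to $V_0$ that would split the extension, contradicting indecomposability). Thus the hypothesis of Lemma \ref{wgp} reduces in this setting to requiring that neither $H^+$ nor $H^-$ contains $V_0$. I would then apply the irreducible case in dimension $d-1$ to the strongly irreducible action of $\langle S \rangle$ on $V_0$, producing a completion $k = K_v$, a word $\gamma \in S^m$ with dominant eigenvalue $\lambda$ on $V_0$ of modulus $\gg 1$ at $v$, and the attracting subspace $A_\gamma \subset V_0$ with cursor $\omega \in (1, |\lambda|_v)$. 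The repelling subspace $R_\gamma$ of $\gamma$ acting on $k^d$, defined as the sum of all generalized eigenspaces of modulus $<\omega$, is $\gamma$-invariant, satisfies $R_\gamma \cap V_0 = R_\gamma^{V_0}$, and extends $R_\gamma^{V_0}$ by exactly one dimension outside $V_0$ (coming from the eigenvalue $1$ on $K^d/V_0$, whose algebraic multiplicity on $k^d$ exceeds that on $V_0$ by one). Neither $A_\gamma$ nor $R_\gamma$ contains $V_0$, so Lemma \ref{wgp} produces conjugators $g_1, \ldots, g_r \in S^{M(d,r)}$ placing $\{g_i A_\gamma\}$ and $\{g_i R_\gamma\}$ in weak general position in $k^d$.

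To verify the transversality condition \eqref{ppcoco} of Proposition \ref{prop} at $v$, I would split the Arakelov-distance estimates into a $V_0$-part, inherited from the irreducible analysis on $V_0$, and an extension part involving the added outside directions. For the latter, the $\gamma$-invariance of $R_\gamma$ lets us apply Lemma \ref{ainv} to bound $h_{\Ar}(R_\gamma) \leq 2 d^2 h(\gamma)$ (the $\log 2$ term vanishing in positive characteristic), and hence also $\delta_{\Ar}(g_i R_\gamma, g_j R_\gamma)$ via Lemma \ref{delar}. Feeding both parts into the sum-over-places estimate of Lemma \ref{sbound} preserves the inequality $s \leq C(d,r) \widehat{h}(S)$ with an adjusted constant, and Proposition \ref{prop} then produces the required $d$-ping-pong configuration on $\PP(k^d)$, yielding $N_0(d, r)$. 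The hard part is ensuring that the single place $v$ extracted by Lemma \ref{combl} simultaneously controls both the intrinsic $V_0$-transversality and the extension-direction transversality; this is handled by running Lemma \ref{combl} on the combined height quantity that accumulates both contributions from the outset.
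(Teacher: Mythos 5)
Your treatment of the irreducible case is correct and matches the paper's: all places are non-archimedean, so $C_{K_v}=1$ and the $\log 2$ terms vanish, only Option~1 of Lemma~\ref{combl} can occur, and the height gap \eqref{heightgap} is not needed because the quasi-symmetrization bound \eqref{qspc} alone closes Lemma~\ref{sbound}.

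For the $1$-indecomposable case, however, you take a detour through the socle $V_0$ that is both more complicated than necessary and left incomplete. The paper's observation is that the \emph{entire} proof of Theorem~\ref{locgap} applies verbatim to the action on $(K^{\al})^d$; the only step whose hypothesis must be re-verified is the application of Lemma~\ref{wgp}, which requires that the complementary admissible subspaces $W$ and $W'$ of $\gamma$ contain no nonzero $\GG$-invariant subspace. The verification is a one-line algebraic argument: the unique nontrivial $\GG$-invariant subspace is the hyperplane $V_0$, so if $W$ or $W'$ contained it, one of them would \emph{equal} $V_0$ and the other would be a $\gamma$-invariant line transverse to $V_0$; since $\GG$ has no one-dimensional character it fixes $(K^{\al})^d/V_0$ pointwise, so that line would be the eigenvalue-$1$ eigenspace, which is excluded by the very definition of an admissible subspace (the associated eigenvalues are not all roots of unity). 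No dimension reduction and no separate bookkeeping for ``extension directions'' are needed.

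In contrast, your approach first runs the irreducible argument on $V_0$ to extract a place $v$ and a $\gamma$, and then tries to extend the transversality estimates from $\PP(V_0\otimes k)$ to $\PP(k^d)$. The problem you yourself flag — ``ensuring that the single place $v$ extracted by Lemma~\ref{combl} simultaneously controls both the intrinsic $V_0$-transversality and the extension-direction transversality'' — is not resolved by saying one should run Lemma~\ref{combl} on a ``combined height quantity''; that quantity is never defined, and once you do define it (adding the Arakelov transversality terms of the $g_iR_\gamma\subset k^d$ to the sum $s_v$) you are in effect redoing the full proof of Theorem~\ref{locgap} on $k^d$ from scratch, which makes the prior pass on $V_0$ redundant. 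There is also a structural subtlety you gloss over: your $A_\gamma\subset V_0$ and $R_\gamma\subset k^d$ need to arise as an admissible subspace of $\gamma$ on $k^d$ together with its full complement (this is what Proposition~\ref{prop} and the definition of $\delta_{W,v}(\gamma)$ require), and you must check that $R_\gamma$ is still admissible after absorbing the eigenvalue-$1$ eigenspace, which was never an issue on $V_0$ alone. So the $1$-indecomposable part of your argument has a genuine gap; replacing the reduction to $V_0$ by the direct Lemma~\ref{wgp} hypothesis check closes it.
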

Starting with Lemma \ref{semisimplegen} in place of Lemma \ref{sizereduce}, the proof is again identical to that of Theorem \ref{locgap} with extra simplifications due to the absence of archimedean places. Indeed only Option 1 occurs and we have $C_1=0$ in \eqref{Cm} and $m=d^2$. This yields a final bound  \begin{equation}\label{explicitNpos}N_0(d,r)\leq \kappa'_d 2^7d^{4d^2}2^{(d+1)(d+2r)} \leq \kappa'_d d^{O(d^2+dr)}.\end{equation} One extra point that needs to be justified in the non irreducible case. Indeed, when we apply Lemma \ref{wgp} to the complementary admissible subspaces $W$ and $W'$ we need to make sure that they do not contain a $\GG$-invariant subspace. To justify this we can argue as follows. Since there is only one non-trivial $\GG$-invariant subspace in a $1$-indecomposable representation, call it $W_0$, this could only happen if either $W$ or $W'$ coincides with $W_0$, which would then force either $W$ or $W'$, say $W'$, to be a line. But $\GG$ has no one-dimensional character, so it must fix pointwise $(K^{\al})^d/W_0$ and thus if $W'$ is the eigenspace of an element $\gamma \in \GG$ it must be the eigenspace with eigenvalue $1$.  However we ruled this out in our definition of an admissible subspace.

\subsection{Proof of Theorem \ref{main1bis}}\label{pfmain1bis}
In this paragraph we derive Theorem \ref{main1bis} from Theorem \ref{locgap}. 


We start with two initial observations. 

\begin{lem}\label{quotient} Let $\Gamma$ be a countable group and $\pi: X \to Y$ a $\Gamma$-map of countable $\Gamma$-sets. Let $\eps>0$. If $S\subset \Gamma$ is an $\eps$-Kazhdan set for $\lambda_Y=\ell^2(Y)$, then it is also an $\eps$-Kazhdan set for $\lambda_X=\ell^2(X)$.
\end{lem}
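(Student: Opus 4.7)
My plan is to produce, from any unit vector $v\in\ell^2(X)$, a companion unit vector $w_v\in\ell^2(Y)$ whose displacements under $\lambda_Y(s)$ dominate those of $v$ under $\lambda_X(s)$ uniformly in $s$. Concretely, for $v\in\ell^2(X)$ I would define
\[
w_v(y)\;:=\;\Bigl(\sum_{x\in\pi^{-1}(y)}|v(x)|^2\Bigr)^{1/2},\qquad y\in Y,
\]
so that $w_v$ is a non-negative real-valued function on $Y$. The Fubini-style identity $\|w_v\|_{\ell^2(Y)}^2=\sum_{y\in Y}\sum_{x\in\pi^{-1}(y)}|v(x)|^2=\|v\|_{\ell^2(X)}^2$ shows $w_v$ is a unit vector whenever $v$ is.

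The next step uses $\Gamma$-equivariance of $\pi$: for each $s\in\Gamma$ the map $x\mapsto sx$ is a bijection $\pi^{-1}(s^{-1}y)\to\pi^{-1}(y)$, hence
\[
\bigl(\lambda_Y(s)w_v\bigr)(y)=w_v(s^{-1}y)=\Bigl(\sum_{x\in\pi^{-1}(s^{-1}y)}|v(x)|^2\Bigr)^{1/2}=\Bigl(\sum_{x'\in\pi^{-1}(y)}|v(s^{-1}x')|^2\Bigr)^{1/2}=w_{\lambda_X(s)v}(y).
\]
Thus the assignment $v\mapsto w_v$ intertwines $\lambda_X$ and $\lambda_Y$ in the sense that $\lambda_Y(s)w_v=w_{\lambda_X(s)v}$.

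Finally, I would compare displacements. For each $y\in Y$, applying the reverse triangle inequality in the Hilbert space $\ell^2(\pi^{-1}(y))$ to $f:=\lambda_X(s)v$ and $g:=v$ gives
\[
\bigl|w_f(y)-w_g(y)\bigr|\;=\;\bigl|\,\|f|_{\pi^{-1}(y)}\|-\|g|_{\pi^{-1}(y)}\|\,\bigr|\;\le\;\|(f-g)|_{\pi^{-1}(y)}\|.
\]
Squaring and summing over $y$ yields $\|w_{\lambda_X(s)v}-w_v\|_{\ell^2(Y)}^2\le\|\lambda_X(s)v-v\|_{\ell^2(X)}^2$. Combining this with the intertwining identity, if $v\in\ell^2(X)$ were a unit vector with $\max_{s\in S}\|\lambda_X(s)v-v\|<\eps$, then $w_v$ would be a unit vector in $\ell^2(Y)$ with $\max_{s\in S}\|\lambda_Y(s)w_v-w_v\|<\eps$, contradicting the hypothesis that $S$ is an $\eps$-Kazhdan set for $\lambda_Y$. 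There is no serious obstacle here; the only subtlety worth noting is that $w_v$ is a genuine element of $\ell^2(Y)$ (not merely of $\ell^\infty$) precisely because the construction pushes forward $|v|^2\in\ell^1(X)$ rather than attempting to pull back functions from $Y$ to $X$.
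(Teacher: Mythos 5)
Your proof is correct and follows essentially the same route as the paper: both define the map $v\mapsto w_v$ (which the paper writes $f\mapsto\bar f$ with $\bar f(y)=\|f|_{\pi^{-1}(y)}\|$), observe that it preserves $\ell^2$-norm, intertwines the two $\Gamma$-actions, and is $1$-Lipschitz by the reverse triangle inequality applied fibrewise. You have simply written out the verifications that the paper leaves to the reader.
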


\begin{proof}This is because the induced map $\ell^2(X) \to \ell^2(Y)$ $f \mapsto \bar{f}$, where $\bar{f}(y)=\|f_{|\pi^{-1}(y)}\|$ is a $\Gamma$-map such that $\|f\|=\|\bar{f}\|$ and $\|\bar{f}-\bar{g}\|\leq \|f-g\|$ for all $f,g \in \ell^2(X)$ as follows from the triangle inequality.
\end{proof}

As a result, when proving Theorem \ref{main1bis} we can always pass to a suitable quotient.

The second observation is that without loss of generality, we may assume either that $H=\{1\}$, or that $H$ is infinite, for if $H$ is finite then  $\ell^2(\Gamma/H)$ naturally  embeds isometrically and equivariantly as the subrepresentation $\ell^2(\Gamma)^H$ of right $H$-invariant functions in  $\ell^2(\Gamma)$. So an $\eps$-Kazhdan set for $\lambda_\Gamma$ will also be $\eps$-Kazhdan for $\lambda_{\Gamma/H}$.

To exploit Theorem \ref{locgap}, we need the following basic lemma.

\begin{lem}\label{line} Let $\GG$ be a semisimple algebraic group of dimension $D$ defined over an algebraically closed field $\KK$ and $\HH$ is an algebraic subgroup. Suppose $\HH^0$ is not normal in $\GG$. Then there is a linear representation $V$ of $\GG$ of dimension at most $2^D$ and at least $2$ such that $\HH$ fixes a line $\ell$ in $V$. If $\char \KK=0$, we can ensure that $V$ is irreducible. Otherwise, it is can be taken to be indecomposable with an irreducible  $\GG$-invariant subspace of codimension one in direct sum with $\ell$.
\end{lem}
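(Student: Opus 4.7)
The plan is to take $V$ as a suitable $\GG$-subquotient of $\Lambda^{r}\mathfrak{g}$, where $r := \dim_{\KK}\mathfrak{h}$ and $\mathfrak{g},\mathfrak{h}$ are the Lie algebras of $\GG,\HH$, with the action induced from the adjoint representation. The natural candidate for the $\HH$-fixed line is $\ell_{0}:=\Lambda^{r}\mathfrak{h}\subset\Lambda^{r}\mathfrak{g}$: since $\HH$ normalizes $\HH^{0}$, the subspace $\mathfrak{h}\subset\mathfrak{g}$ is $\operatorname{Ad}(\HH)$-stable, and hence $\HH$ fixes $\ell_{0}$.

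\textbf{Basic bounds.} I would first check the dimension conditions uniformly: $\dim\Lambda^{r}\mathfrak{g}=\binom{D}{r}\leq 2^{D}$. The hypothesis that $\HH^{0}$ is not normal in $\GG$ rules out both $r=0$ (which would force $\HH^{0}=\{1\}$, trivially normal) and $r=D$ (which would force $\HH^{0}=\GG$, since $\GG$ is connected, again trivially normal). Hence $0<r<D$, so $\dim\Lambda^{r}\mathfrak{g}\geq D\geq 2$.

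\textbf{Characteristic zero.} Here $\GG$-representations are completely reducible, so $\Lambda^{r}\mathfrak{g}=V_{\mathrm{triv}}\oplus V'$ as $\GG$-modules, where $V_{\mathrm{triv}}$ is the sum of trivial isotypic components and $V'$ is the sum of non-trivial irreducible summands. The key observation is that $\ell_{0}\not\subset V_{\mathrm{triv}}$: indeed, $\ell_{0}\subset V_{\mathrm{triv}}$ would mean $\GG$ stabilizes $\ell_{0}$, and hence $\GG$ normalizes $\mathfrak{h}$ as a subspace of $\mathfrak{g}$; by the Lie correspondence in characteristic zero this would force $\GG$ to normalize $\HH^{0}$, contradicting the hypothesis. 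Writing a generator $v_{0}$ of $\ell_{0}$ according to the decomposition and projecting further onto a non-trivial irreducible summand $V_{i}\subset V'$ where the projection is non-zero, one obtains (using that $\HH$-equivariant projections send the $\HH$-fixed line $\ell_{0}$ to an $\HH$-fixed line) an irreducible $V:=V_{i}$ of dimension at least $2$ and at most $\binom{D}{r}\leq 2^{D}$, containing an $\HH$-fixed line.

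\textbf{Positive characteristic.} Complete reducibility and the full Lie correspondence both fail here, and producing the specific $1$-indecomposable structure (an indecomposable $V$ carrying an irreducible $\GG$-submodule $W$ of codimension one with $V=W\oplus\ell$ for some $\HH$-fixed line $\ell$) is where I expect the main obstacle to lie. My plan is to pass from $\Lambda^{r}\mathfrak{g}$ to a suitable sub-quotient by choosing a $\GG$-submodule $V_{1}\subset\Lambda^{r}\mathfrak{g}$ maximal among those \emph{not} containing $\ell_{0}$, and setting $V:=\Lambda^{r}\mathfrak{g}/V_{1}$. By maximality of $V_{1}$, every nonzero $\GG$-submodule of $V$ contains the image $\bar{\ell_{0}}$ of $\ell_{0}$, so $V$ has a unique minimal nonzero $\GG$-submodule $W=\langle\GG\cdot\bar v_{0}\rangle$, which is irreducible and makes $V$ indecomposable. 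The delicate step is verifying that $\dim V/W=1$ and that $\bar{\ell_{0}}$ provides a vector-space complement to $W$: this combines the fact that $\GG$ is semisimple (so its only algebraic characters are trivial, forcing $V/W$ to be a trivial $\GG$-module) with the cyclic structure of $V$ generated by $\bar v_{0}$, together with a careful analysis ensuring that the construction can be arranged so that $\bar\ell_{0}\not\subset W$ — otherwise one replaces the ambient representation and repeats. This last point is the main technical hurdle, and is where the specific wedge-power structure of $\Lambda^{r}\mathfrak{g}$ and the flexibility in choosing $V_{1}$ are essential.
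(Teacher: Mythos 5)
Your characteristic-zero argument is correct and is essentially a rephrasing of the paper's: where you split $\Lambda^{r}\mathfrak{g}$ into isotypic pieces and project $\ell_{0}$ onto a non-trivial irreducible summand, the paper takes the cyclic $\GG$-submodule $U=\langle\GG\cdot\ell_{0}\rangle$ and passes to an irreducible quotient; by complete reducibility these amount to the same thing, and your observation that $\ell_{0}\not\subset V_{\mathrm{triv}}$ (via the Pl\"ucker-type recovery of $\mathfrak{h}$ from $\Lambda^{r}\mathfrak{h}$ and the Lie correspondence) correctly plays the role of the paper's remark that $\ell_{0}$ is not $\GG$-fixed.

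The positive-characteristic construction, however, has a genuine and unrepairable flaw in the form you propose. Choosing $V_{1}\subset\Lambda^{r}\mathfrak{g}$ maximal among $\GG$-submodules \emph{not} containing $\ell_{0}$ means, as you note, that every nonzero $\GG$-submodule of $V=\Lambda^{r}\mathfrak{g}/V_{1}$ contains $\bar\ell_{0}$. In particular $\bar\ell_{0}\subset W$, where $W$ is the unique minimal (socle) submodule. But the target structure requires $V=W\oplus\ell$ with $\ell$ the $\HH$-fixed line, and a line contained in $W$ can never be a vector-space complement to $W$. The final sentence of your plan (``ensuring that $\bar\ell_{0}\not\subset W$ --- otherwise one replaces the ambient representation and repeats'') therefore asks for something your construction makes impossible by design; there is nothing to replace, since $\bar\ell_{0}\subset W$ is forced by the maximality of $V_{1}$. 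Informally, you have placed the line in the \emph{socle} of an indecomposable module, whereas the statement requires it in the \emph{head}. The paper's proof goes the other way: it works inside the cyclic submodule $U=\langle\GG\cdot\ell_{0}\rangle$, so that $\ell_{0}$ lies outside \emph{every} proper $\GG$-submodule; if the irreducible quotient is $1$-dimensional one is left with a codimension-one submodule $U'$ and, after further quotienting by a maximal proper submodule of $U'$, one obtains an indecomposable module with irreducible codimension-one socle, complemented by the image of $\ell_{0}$. This dual choice of which side of the Loewy series to put $\ell_{0}$ on is precisely the missing idea.
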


\begin{proof} Let $\mathfrak{g}$ be the Lie algebra of $\GG$ and $\mathfrak{h}$ that of $\HH$. Each wedge representation $\Lambda^k \mathfrak{g}$  has dimension at most $2^D$. The line $\Lambda^k \mathfrak{h}$, where $k=\dim \HH>0$, is fixed under the action of $\HH$, but not under $\GG$ because $\HH^0$ is not normal in $\GG$. Consider the span $U$ of the $\GG$-orbit of $\Lambda^k \mathfrak{h}$ and mod out by a proper $\GG$-invariant subspace of largest dimension. This gives a $\GG$-representation $V$ which is irreducible and contains a line $\ell$, the image of $\Lambda^{k}\mathfrak{h}$, which is $\HH$-invariant. If $\dim V \ge 2$ we are done. Otherwise, this means that $U$ has a  $\GG$-invariant codimension one subspace $U'$, and moding out by a maximal proper $\GG$-invariant subspace in $U'$ we may assume that $U'$ is a $\GG$-irreducible codimension one subspace and that $\langle U',\ell\rangle = U$. If $U$ is indecomposable we may take $V=U$. If not $U'$ has a $\GG$-invariant complement. The projection of $\ell$ on $U'$ modulo the complement is non-zero, for else $\ell$ would be $\GG$-invariant. We can then take $V=U'$ and rename $\ell$ to be this projection. When $\char \KK=0$ every indecomposable module is irreducible by complete reducibility  (e.g. see \cite[Thm VI.5.1]{serre-lie-algebras}), so $V$ can be taken irreducible.
\end{proof}

\noindent \emph{Remark.} Recall that Chevalley's theorem \cite[11.2]{humphreys} asserts something very similar for an arbitrary pair of algebraic groups $\HH \leq \GG$. The difference here is that we get an irreducible representation (or at least indecomposable in positive characteristic) and we have a bound on its dimension. In positive characteristic the representation may not always be taken to be irreducible. For instance if $\GG=\SL_2$ and $\HH$ is the normalizer of a maximal torus, then no irreducible $V$ can be chosen if $\char \KK=2$, because every simple $\GG$-module has trivial weight space with weight zero as can be seen from Steinberg's tensor product theorem. We are indebted to Bob Guralnick for this observation.

\smallskip

 We first prove Theorem \ref{main1bis} assuming that $\GG$ is a $K$-split semisimple algebraic $K$-group for a global field $K$ and that $\Gamma \leq \HH(K)$. Note that the Zariski-closure of any subset of $\Gamma$ is defined over $K$, and in proving Theorem \ref{main1bis}, we may assume that the Zariski-closure $\HH$ of $H$ is defined over $K$. The representation $V$ constructed in Lemma \ref{line} will then be defined over $K$.

 If $\HH^0$ is normal in $\GG$, then projecting to the quotient modulo $\HH^0$, using the first initial observation above we reduce to the situation with $\HH$ is finite. Using second initial observation, this case boils down to the case when $H$ is trivial.   If $\HH^0$ is not normal in $\GG$, Lemma \ref{line} guarantees that $H$ is contained in the stabilizer of a $K$-line $x$  in a $K$-representation $V$ of $\GG$ satisfying the conclusion of Lemma \ref{line}. Of course this is also true if $H$ is trivial. Therefore without loss of generality we can always assume that $H$ is contained in the stabilizer of a $K$-line $x$ in such a $\GG$-representation $V$. In fact, using the first initial observation one more time, we may further assume that $H$ is the stabilizer of the $K$-line $x$.

We can thus apply Theorem \ref{locgap}  to find a set $F$ in $\Gamma$ of size $r=|F|$ and a completion $k=K_v$ of $K$ such that $F$ is  in $D$-ping-pong position on $X=\PP(V(k))$, where $D=\dim V$.  Restricting the sets $A_\gamma$ and $R_\gamma$ to the $\Gamma$-orbit of $x$, we see that $F$ is in $D$-ping-pong position when acting on $\Gamma/H$, since $H$ is the stabilizer of $x$.  Then by Lemma \ref{ping-pong with overlaps}

\begin{equation}\label{Fbound}\|\pi(u_F)\| \leq 2\sqrt{\frac{D}{|F|}}\end{equation}
where $u_F$ is the uniform measure on $F$ and $\pi:=\ell^2(\Gamma/H)$. 
Now suppose that there is a unit vector $v$ such that $\|\pi(s)v-v\|\leq \eps$ for all $s\in S$. Then $\|\pi(f)v-v\|\leq N\eps$ for all $f \in F$ and hence  $\|\pi(u_F)v-v\|\leq N\eps$, which in turn implies:
$$1-N\eps \leq \|\pi(u_F)v\|\leq \|\pi(u_F)\| \leq 2\sqrt{\frac{D}{|F|}}.$$
We conclude that $\eps>(1-\sqrt{4D/|F|})\frac{1}{N}$.
We then choose $r=|F|=16D$, take $N=N(D,r)$ and conclude that  $\eps>\frac{1}{2N}$. This yields Theorem \ref{main1bis} with \begin{equation}\label{expliciteps}\eps_{d}=1/2N_d,\end{equation} where $N_d$ is the maximal value of $N(D,16D)$ for $D\leq 2^{d^2}$. 

Finally, we explain why we may assume without loss of generality that $\GG$ is defined and split over a global field $K$ and that $\Gamma$ lies in $\GG(K)$. For this we can use a standard specialization argument. Semisimple algebraic groups have models over $k_0^{al}$,  where $k_0$ is the prime field of $\KK$ ($k_0=\QQ$ or $\FF_p$ according to the characteristic). So there is no loss of generality in assuming that $\GG$ is defined over $k_0^{al}$. The subgroup $\Gamma:=\langle S \rangle$ is finitely generated, hence lies in $\GG(L)$ for some algebraically closed subfield with finite transcendence degree $L:=k_0(t_1,\ldots,t_k)^{al}$, where $t_1,\ldots,t_k$ are algebraically independent elements in $\KK$.   In positive characteristic we must have $k\ge 1$ for $\Gamma$ to be infinite. We may then find specializations $t_i \mapsto f_i$ for $i\ge 2$, for elements $f_2,\ldots,f_k \in k_0(t_1)^{al}$ such that the resulting homomorphism $\phi:\Gamma \to \GG(k_0(t_1)^{al})$ is still Zariski-dense (see e.g. \cite[Theorem 4.1]{larsen-lubotzky}). In zero characteristic we may also specialize $t_1$ and thus obtain a map $\phi: \Gamma \to \GG(\QQ^{al})$. A proper algebraic subgroup defined over $L$ will specialize to a proper algebraic subgroup defined over $k_0(t_1)^{al}$ (resp. $\QQ^{al}$). Hence an upper bound on $(u_F*u_F^{-1})^{*2n}(H)^{1/2n}$ after the specialization will imply the same bound before the specialization. As $(u_F*u_F^{-1})^{*2n}(H)^{1/2n}$ tends to $\|\pi(u_F*u_F^{-1})\|=\|\pi(u_F)\|^2$ from below as $n$ tends to $+\infty$, we see that there is no loss of generality in assuming that $L=k_0(t_1)^{al}$ (resp. $L=\QQ^{al}$) and that $\Gamma \leq \GG(K)$ for some finitely generated subfield $K$ of $L$ (the desired global field), which we can choose large enough so $\GG$ splits over it.  This completes the proof of Theorem \ref{main1bis}.

\subsection{Proof of Corollary \ref{cor1bis}}\label{equivproof}

For each unit vector $v$, let $B_v=\{s \in \Gamma\mid \|\lambda_{\Gamma/H}(s)v-v\| < \eps_{d}\}$.   Then $\EE_{\mu^{-1}*\mu} \|\lambda_{\Gamma/H}(s)v-v\|^2 \geq \eps_{d}^2(1-\mu^{-1}*\mu(B_v))$.  By Theorem \ref{main1bis} every finite subset $B'_v \subset B_v$ is contained in a proper algebraic subgroup, say $L$, of $G$. So $\mu^{-1}*\mu(B'_v) \leq \mu^{-1}*\mu(L)\leq \sup_{g \in G} \mu(gL)\leq 1-\beta(\mu)$. Thus the same bound holds for $B_v$.  From  Lemma \ref{norm} below and \eqref{normav}, we conclude that $\|\lambda_{\Gamma/H}(\mu)\|_{\op}^2 \leq  1- \eps_{d}^2\beta(\mu)/2$.  This ends the proof of Corollary \ref{cor1bis}.

\begin{lem}\label{norm} Let $\pi$ be a unitary representation of a discrete group $\Gamma$ and $\mu$ a symmetric probability measure on $\Gamma$. Then 
\begin{equation}\label{normav}\inf_{\|v\|=1}\EE_{\mu^{-1}*\mu} \|\pi(s)v-v\|^2 = 2(1- \|\pi(\mu)\|^2 ).\end{equation}
Let $c_\mu=\inf\{\mu(x)\mid \mu(x)\neq 0\}$ and $\eps$ be the supremum of all $\eta>0$ such that for every unit vector $v$ there is $g$ with $\|\pi(g)v-v\|\ge \eta$ and $\mu(g)>0$. Then 
$$1-\eps^2/2 \leq \|\pi(\mu)\| \leq 1-(c_\mu \eps)^2/2.$$
\end{lem}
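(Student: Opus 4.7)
The plan is to reduce both parts of Lemma~\ref{norm} to the operator identity $\pi(\mu)^{*}\pi(\mu)=\pi(\mu^{-1}*\mu)$, which is valid for any probability measure $\mu$ and unitary representation $\pi$. Pairing both sides with a unit vector $v$ gives
\[
\|\pi(\mu)v\|^{2}=\langle\pi(\mu^{-1}*\mu)v,v\rangle=\sum_{s}(\mu^{-1}*\mu)(s)\,\langle\pi(s)v,v\rangle,
\]
and a direct computation shows that $\mu^{-1}*\mu$ is self-inverse, so $\pi(\mu^{-1}*\mu)$ is self-adjoint and the sum above is real. Substituting the polarisation identity $\operatorname{Re}\langle\pi(s)v,v\rangle=1-\tfrac12\|\pi(s)v-v\|^{2}$ will turn this into
\[
\|\pi(\mu)v\|^{2}=1-\tfrac12\,\EE_{\mu^{-1}*\mu}\|\pi(s)v-v\|^{2},
\]
and taking $\sup_{\|v\|=1}$ of both sides will yield \eqref{normav}.

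For the lower bound $\|\pi(\mu)\|\ge 1-\eps^{2}/2$, I will exploit the characterisation of $\eps$ as a supremum. For any $\eta>\eps$ the defining property fails, so there exists a unit vector $v$ with $\|\pi(g)v-v\|<\eta$ for every $g\in\supp\mu$. Polarisation then gives $\operatorname{Re}\langle v,\pi(g)v\rangle>1-\eta^{2}/2$ for each such $g$; averaging against $\mu$ will yield $\operatorname{Re}\langle v,\pi(\mu)v\rangle>1-\eta^{2}/2$, and Cauchy--Schwarz then produces $\|\pi(\mu)v\|\ge|\langle v,\pi(\mu)v\rangle|\ge\operatorname{Re}\langle v,\pi(\mu)v\rangle>1-\eta^{2}/2$. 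Letting $\eta\downarrow\eps$ will complete this direction.

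For the upper bound I will reverse the previous argument. Fix any unit $v$; for each $\eta<\eps$, the definition of $\eps$ yields $g^{*}\in\supp\mu$ with $\|\pi(g^{*})v-v\|\ge\eta$. The main obstacle is to turn this single estimate into a useful lower bound on $\EE_{\mu^{-1}*\mu}\|\pi(s)v-v\|^{2}$, since the expectation is taken against $\mu^{-1}*\mu$ rather than $\mu$. I will handle this by writing the convolution explicitly as $(\mu^{-1}*\mu)(g^{*})=\sum_{y}\mu(y^{-1})\mu(y^{-1}g^{*})$ and retaining only a single term corresponding to a well-chosen $y\in\supp\mu$ with $y^{-1}g^{*}\in\supp\mu$, which will give $(\mu^{-1}*\mu)(g^{*})\ge c_{\mu}^{2}$; this is the step where the symmetry of $\mu$ and the definition of $c_{\mu}$ enter. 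Keeping only the $g^{*}$-contribution in the expectation will then yield $\EE_{\mu^{-1}*\mu}\|\pi(s)v-v\|^{2}\ge c_{\mu}^{2}\eta^{2}$, and combining with the identity from the first paragraph, taking the supremum over $v$, and letting $\eta\uparrow\eps$ will produce the claimed upper bound on $\|\pi(\mu)\|$.
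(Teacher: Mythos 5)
Your proof of \eqref{normav} is correct and takes essentially the same route as the paper's: both rest on $\pi(\mu^{-1}*\mu)=\pi(\mu)^{*}\pi(\mu)$, the resulting self-adjointness, and the polarisation identity. Your lower-bound argument is also correct. The upper bound, however, has a genuine gap. You invoke the existence of a $y$ with $y\in\supp\mu$ and $y^{-1}g^{*}\in\supp\mu$, which is equivalent to $g^{*}\in(\supp\mu)\cdot(\supp\mu)=\supp(\mu^{-1}*\mu)$; but the definition of $\eps$ only places $g^{*}$ in $\supp\mu$, and these two supports need not meet. For instance, if $\supp\mu=\{a,a^{-1}\}$ with $a$ of infinite order then $\supp(\mu^{-1}*\mu)=\{e,a^{2},a^{-2}\}\not\ni a$, so $(\mu^{-1}*\mu)(g^{*})=0$ and the term you wish to keep vanishes.

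This is not merely a flaw in the particular argument; the upper bound as literally stated can fail. Take $\Gamma=\ZZ$, $\mu=\tfrac{1}{2}(\delta_{1}+\delta_{-1})$ and $\pi(n)=(-1)^{n}$ acting on $\CC$: then $\pi(\mu)=-1$ so $\|\pi(\mu)\|=1$, while $c_{\mu}=\tfrac{1}{2}$, $\eps=2$, and the claimed bound $1-(c_{\mu}\eps)^{2}/2=\tfrac{1}{2}$ is violated. A large $\eps$ only prevents $\pi(\mu)$ from having spectrum near $+1$, not near $-1$. The paper offers no proof beyond ``follow immediately'' and in fact only uses \eqref{normav} later on, so the intended statement is presumably about $\sup_{\|v\|=1}\langle\pi(\mu)v,v\rangle$, or has $\eps$ defined with respect to $\mu^{-1}*\mu$ (whose support always contains $e$, which would give $(\mu^{-1}*\mu)(g^{*})\geq c_{\mu}^{2}$ and repair your single-term estimate). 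Note also that even with that estimate in hand, combining it with \eqref{normav} as you propose bounds $\|\pi(\mu)\|^{2}$ by $1-(c_{\mu}\eps)^{2}/2$, which is strictly weaker than the same bound on $\|\pi(\mu)\|$; one would still need an extra step (e.g.\ $\sqrt{1-x}\leq 1-x/2$, which loses a factor of $2$).
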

\begin{proof}
Set $\nu=\mu^{-1}*\mu$. Note that $\|\pi(s)v-v\|^2=2(1-\Re\langle  \pi(s)v,v\rangle)$, so $\EE_{\nu} \|\pi(s)v-v\|^2 = 2(1-\langle \pi(\nu)v,v\rangle)$ and  $\|\pi(\nu)\|=\sup_{\|v\|=1} \langle \pi(\nu)v,v\rangle$. Hence \eqref{normav}. The two other inequalities follow immediately.
\end{proof}

\section{Non-abelian Littlewood--Offord and anti-concentration}\label{LOpf}

In this section we prove Theorems \ref{LOvarieties}  and \ref{naLO} and derive Corollary \ref{logescape}.

We recall the classical \emph{Bezout inequality}, which takes the following familiar form with the definition the degree used in this paper (see Section \ref{var}) :  \begin{equation}\label{bezoutineq}\deg(V \cap W) \leq \deg(V)\deg(W)\end{equation} for all closed algebraic subvarieties $V,W$ in $\GL_d$ (see e.g. \cite{danilov, schnorr}).

\subsection{Proof of Theorem \ref{LOvarieties}}

Write $\beta_i = \beta(X_i)$ for brevity. We first make the additional assumption that  the supports of the distributions of all $X_i$ are finite and let  $\Gamma$ be the (countable) subgroup they generate. At the end we will explain how to remove this assumption.
Let $L$ be a proper algebraic subgroup of $\GG$
and $H=L\cap\Gamma$. We begin with the case where $V$ is a coset
of the form $\gamma L$ for $\gamma\in\Gamma$. If $\mu_{i}$ is the
distribution of $X_{i}$ then

\begin{align}\label{cosetcase}\PP(X_1\cdots X_n \in \gamma L)&=\langle \lambda_{\Gamma/H}(\mu_1*\cdots*\mu_n)\delta_H,\delta_{\gamma H}\rangle\\\notag &\leq \prod_{i=1}^n\|\lambda_{\Gamma/H}(\mu_i)\|_{\op} \leq e^{-\eps'_{d} \sum_{i=1}^n \beta_i}. \end{align} where the last inequality follows from  Corollary \ref{cor1bis}.

We now consider an arbitrary algebraic subvariety $V$ in $\GG$ and, using a classical decoupling argument together with Bezout's inequality, we  will prove by induction on $D:=\dim V$ that for $\delta:=\deg V$
\begin{equation}\label{bbvar}\PP(X_1\cdots X_n \in V) \leq \delta(1+D)^{1/2}\exp(-\frac{\eps'_{d}}{4^{D}} \sum_{i=1}^n \beta_i).\end{equation} By the union bound, we may assume that $V$ is irreducible  without loss of generality. When $D=0$, this is \eqref{cosetcase}. We thus assume $D\ge 1$. If $\sum_{i=1}^n \beta_i \leq 4$, then the right hand-side of \eqref{bbvar} is at least $1$, so  \eqref{bbvar} holds trivially. We thus assume that $\sum_{i=1}^n \beta_i \ge 4$.  Let $X=X_1\cdots X_r$, where $r$ is the first integer such that $\sum_{i=1}^r \beta_i \ge \frac{1}{2}\sum_{i=1}^n \beta_i$. Set $Y=X_{r+1}\cdots X_n$. Then \begin{equation}\label{betab}\sum_{i=r+1}^n \beta_i = \sum_{i=1}^n \beta_i - \sum_{i=1}^{r-1} \beta_i -\beta_r\ge \frac{1}{2}\sum_{i=1}^n \beta_i -1 \ge \frac{1}{4}\sum_{i=1}^n \beta_i.\end{equation} If we choose another independent copy $Y'$ of $Y$, we can write, by Cauchy--Schwarz:
\begin{align}\label{decompp}
\PP\left(XY\in V\right)^{2} & \leq\EE\left[\PP\left(X\in VY^{-1}\mid X\right)^{2}\right]=\PP\left(X\in VY^{-1}\cap VY'^{-1}\right)\\
 & \leq\PP\left(V=VY^{-1}Y'\right)+\PP\left(X\in VY^{-1}\cap VY'^{-1},VY^{-1}\neq VY'^{-1}\right) \nonumber\\
 & \leq\PP\left(Y^{-1}Y'\in H_{V}\right)+\max_{W}\PP\left(X\in W\right) \nonumber
\end{align}
where $H_{V}=\left\{ g\in\GG\mid V=Vg\right\} $ and $W$ runs over
all closed subvarieties of dimension $<D$ and degree at most $\delta^{2}$.
Indeed, for $y,y'\in\GG$, if $Vy^{-1}\neq Vy'^{-1}$ then $Vy^{-1}\cap Vy'^{-1}$
is a closed subvariety of dimension $<D$, and of degree at most $\delta^{2}$
by Bezout's inequality \eqref{bezoutineq}.

Note that $\beta_i=\beta(X_i)=\beta(X_i^{-1})$.
Indeed, $X\in gH$ if and only if $X^{-1}\in g^{-1}\left(gHg^{-1}\right)$
for every $g\in G$ and subgroup $H$ of $G$.
Then by $(\ref{cosetcase})$ the term $\PP(Y^{-1}Y' \in H_V)$ is bounded above by $\exp(-\eps'_{d}  \sum_{i=r+1}^n 2\beta_i)$, which by \eqref{betab} is at most  $\exp(-\frac{\eps'_d}{2}  \sum_{i=1}^n \beta_i)$. By induction hypothesis, the other term satisfies $$\PP(X \in W)\leq D^{1/2}\delta^2 \exp(-\frac{\eps'_d}{4^{D-1}} \sum_{i=1}^r \beta_i) \leq D^{1/2}\delta^2 \exp(-\frac{\eps'_d}{2\cdot 4^{D-1}} \sum_{i=1}^n \beta_i).$$ But $D \ge 1$ and $1+D^{1/2}\delta^2 \leq (1+D)\delta^2.$  Hence:
$$\PP(XY \in V)^2 \leq (1+D)\delta^2  \exp(-\frac{\eps'_d}{2\cdot 4^{D-1}} \sum_{i=1}^n \beta_i).$$
and this completes the induction step and the proof of Theorem \ref{LOvarieties} when the $X_i$ are finitely supported with $c=2\eps'_d  4^{-d^2}$. 

Proposition \ref{reductioncount}, applied with $f\left(x_{1},\dotsc,x_{n}\right)=1_{x_{1}\cdots x_{n}\in V}$,
$p=c/2$ and $C=C_{V}^{1/p}$ says that if 
\[
\PP\left(X_{1}\cdots X_{n}\in V\right)\leq C_{V}\prod_{i=1}^{n}\left(1-\beta_i\right)^{c/2}
\]
holds whenever the measures $\mu_{1},\dotsc,\mu_{n}$ are finitely
supported, then the same inequality holds without the assumption of
finite support. But
\[
e^{-c\sum_{i=1}^{n}\beta_i}\leq\prod_{i=1}^{n}\left(1-\beta_i\right)^{c/2}\leq e^{-\frac{c}{2}\sum_{i=1}^{n}\beta_i}
\]
since $1-x\leq e^{-x}\leq1-x/2$ for $x\in\left[0,1\right]$. Thus, at the cost of eventually halving $c$,  it is legitimate to assume the $\mu_i$
 finitely supported.

\subsection{Reduction to finitely supported measures}
In the proof of Theorem \ref{LOvarieties} we have made use of a reduction to the case of finitely supported random variables. Some work is needed to achieve this. This is the content of the following proposition. 

Let $k$ be a local field. For a family $\mathcal{L}$ of Borel subsets of $\GL_d(k)$ and a $\GL_d(k)$-valued random variable $X$  with distribution $\mu$ we set $$\gamma_{\mathcal{L}}(X)=\gamma_{\mathcal{L}}(\mu):=\sup_{L \in \mathcal{L}} \PP(X \in L)$$ and $\beta_{\mathcal{L}}(\mu)=1-\gamma_{\mathcal{L}}(\mu)$.

In the proposition below $G=\GG(k)$ denotes the group of $k$-points of an arbitrary connected linear algebraic
$k$-group in $\GG\leq \GL_{d}$ for a local field $k$, and  $\calL$ denotes the
family of all cosets of proper algebraic subgroups of $\GG$.
\begin{prop}\label{reductioncount}
Let $f\geq0$ be a bounded non-negative Borel measurable function on $G^{n}$, $n\geq1$,
and $C,p>0$. Suppose that
\begin{equation}
\EE\left[f\left(X_{1},\dotsc,X_{n}\right)^{p}\right]^{1/p}\leq\prod_{i=1}^{n}\gamma_{\calL}\left(X_{i}\right)\label{bbdd}
\end{equation}
holds for all choices of independent random variables $X_{1},\dotsc,X_{n}$
whose laws $\mu_{1},\dotsc,\mu_{n}$ have finite support in $G$.
Then the same inequality holds also if $\mu_{1},\dotsc,\mu_{n}$ are
arbitrary Borel probability measures.
\end{prop}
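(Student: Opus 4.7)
The plan is to approximate each $\mu_i$ by the empirical measure of i.i.d.\ samples, apply the hypothesis realization-wise, and pass to the limit using independence and Fubini.

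For $N \geq 1$ and each $i \in \{1,\ldots,n\}$, I would let $Y_i^{(1)},\ldots,Y_i^{(N)}$ be i.i.d.\ samples from $\mu_i$, drawn independently across $i$, and form the (finitely supported) empirical measures $\hat\mu_i^{(N)} = \frac{1}{N}\sum_{j=1}^{N} \delta_{Y_i^{(j)}}$. By Fubini and the independence of the samples across different $i$,
\[
\EE_{\mu_1\otimes\cdots\otimes\mu_n}[f^p] \;=\; \EE_{\mathrm{samples}}\!\left[\int f^p \, d\hat\mu_1^{(N)}\otimes\cdots\otimes\hat\mu_n^{(N)}\right].
\]
Applying the hypothesis realization-wise (raised to the $p$-th power), taking expectation over the samples, and using independence across $i$ to factor the resulting product, I obtain
\[
\EE_{\mu}[f^p] \;\leq\; \prod_{i=1}^{n} \EE\!\left[\gamma_\calL\bigl(\hat\mu_i^{(N)}\bigr)^p\right].
\]
Since $0 \leq \gamma_\calL \leq 1$, bounded convergence then reduces the problem to showing that $\gamma_\calL(\hat\mu_i^{(N)}) \to \gamma_\calL(\mu_i)$ in probability for each $i$ as $N \to \infty$. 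The direction $\liminf \gamma_\calL(\hat\mu_i^{(N)}) \geq \gamma_\calL(\mu_i)$ is immediate: pick a near-optimal coset $L^* \in \calL$ with $\mu_i(L^*)$ close to $\gamma_\calL(\mu_i)$ and apply the classical strong law to $\hat\mu_i^{(N)}(L^*)$.

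The main obstacle is the matching upper bound, which amounts to a uniform Glivenko--Cantelli statement
\[
\sup_{L\in\calL}\bigl(\hat\mu_i^{(N)}(L) - \mu_i(L)\bigr)\xrightarrow{\mathrm{P}} 0
\]
over the \emph{uncountable} family $\calL$ of cosets of proper algebraic subgroups of $\GG$. To handle this, I would first observe that the supremum defining $\gamma_\calL$ may be restricted to cosets of \emph{maximal} proper algebraic subgroups, since $\mu(gL) \leq \mu(gL')$ whenever $L \leq L'$, and every proper algebraic subgroup is contained in a maximal one by dimension. Next, I would use the structural fact that in a connected algebraic subgroup $\GG \leq \GL_d$, maximal proper algebraic subgroups have uniformly bounded algebraic complexity --- being cut out by polynomial equations of degree depending only on $d$ --- so their cosets, viewed as subsets of $G$, form a definable class of finite VC dimension by the Milnor--Thom--Warren bound on the number of realizable sign patterns. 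A tightness argument using inner regularity (restricting to a compact $K_i \subset G$ with $\mu_i(K_i) \geq 1 - \varepsilon$) handles the non-compactness of $G$: off $K_i$ the contribution to $\hat\mu_i^{(N)}(L)-\mu_i(L)$ is absolutely bounded by $\hat\mu_i^{(N)}(G\setminus K_i) + \mu_i(G\setminus K_i)$, which is small in probability by the classical strong law applied to the single set $G\setminus K_i$. The standard Vapnik--Chervonenkis uniform law of large numbers applied to the VC class on $K_i$ then yields the desired uniform convergence, completing the proof. The crux of the argument, and the place where the ``not routine'' reduction lies, is precisely this uniform control over $\calL$, which relies on structural finiteness results for proper algebraic subgroups.
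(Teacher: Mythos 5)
Your Fubini reduction is sound, as is the reduction to proving $\gamma_\calL\bigl(\hat\mu_i^{(N)}\bigr)\to\gamma_\calL(\mu_i)$ in probability, and the $\liminf$ direction is handled correctly by the classical strong law. The gap lies in the uniform Glivenko--Cantelli step. You argue that it suffices to control cosets of \emph{maximal} proper algebraic subgroups and that these have degree bounded in terms of $d$ alone. Both claims fail in the generality of the proposition, which concerns an arbitrary connected linear algebraic $k$-group $\GG\leq\GL_d$, not just a semisimple one. Already for $\GG=\GG_m^2$ maximal proper algebraic subgroups do not exist: for any $1$-dimensional subtorus $T$, writing $T_n$ for the preimage in $\GG$ of the $n$-torsion of $\GG/T\cong\GG_m$, the chain $T\subsetneq T_2\subsetneq T_4\subsetneq\cdots$ of proper algebraic subgroups is strictly ascending with no algebraic upper bound, so Zorn yields nothing. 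Moreover the $1$-dimensional subtori of $\GG_m^2$ (kernels of characters $(x,y)\mapsto x^ay^b$) have degree growing like $\max(|a|,|b|)$, hence unbounded; the analogous unboundedness occurs in positive characteristic even for semisimple $\GG$ via subfield subgroups $\GG(\FF_q)$. There is therefore no single bounded-complexity definable family covering $\calL$, and the Vapnik--Chervonenkis uniform law cannot be invoked as you do.

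The paper's Lemma \ref{maxsub} is precisely the device that handles this obstruction, and its statement is weaker than what you need: every proper algebraic subgroup is either contained in a subgroup of degree $\le N_\GG$, \emph{or} in one of a countable family $\{\HH_n\}$ of proper subgroups, the latter absorbing the torus phenomenon (and the subfield subgroups are controlled only because $k$ is a local field and so has a largest finite subfield). Even granted this, the paper does not run a VC argument. Instead, Lemma \ref{sampling}, after an algebraic decomposition of $\mu$ into pieces carried by finitely many subvarieties and an induction on the dimension of the carrier, shows that almost surely $\hat\mu^{(N)}(L)\leq\hat\mu^{(N)}(L^*)+O(1/N)$ simultaneously for all $L\in\calL$, where $L^*$ ranges over a \emph{finite}, $\mu$-dependent set $\calL_V$; the $O(1/N)$ term is controlled by a Bezout degree count showing that a sample from the top-dimensional piece of $\mu$ rarely lands repeatedly in a subvariety of too-low dimension. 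This $\mu$-tailored estimate is what replaces the measure-independent uniform law you were aiming for, and is what sidesteps the unbounded-complexity problem at the source. Your Fubini setup could plausibly be combined with such a decomposition, but the direct VC route as written does not go through.
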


\begin{proof}
We proceed by induction on $n$. The case $n=1$ is Lemma \ref{measurelem} below.
Let $\mu_{1},\dotsc,\mu_{n}$ be $G$-valued Borel probability measures,
$X_{i}\sim\mu_{i}$, with $X_{1},\dotsc,X_{n}$ independent. Let $g\left(x_{1},\dotsc,x_{n-1}\right)=\EE\left[f\left(x_{1},\dotsc,x_{n-1},X_{n}\right)^{p}\right]^{1/p}$.
By assumption, $\EE\left[g\left(X_{1},\dotsc,X_{n-1}\right)^{p}\right]^{1/p}=\EE\left[f\left(X_{1},\dotsc,X_{n}\right)^{p}\right]^{1/p}\leq C'\prod_{i=1}^{n-1}\gamma_{\calL}\left(\mu_{i}\right)$,
where $C'=C\gamma_{\calL}\left(\mu_{n}\right)$, if $\mu_{1},\dotsc,\mu_{n}$
are finitely supported. Thus, by the induction hypothesis, $\EE\left[f\left(X_{1},\dotsc,X_{n}\right)^{p}\right]^{1/p}\leq C\prod_{i=1}^{n}\gamma_{\calL}\left(\mu_{i}\right)$
even if only $\mu_{n}$ is assumed to be finitely supported. Now let
$h\left(x\right)=\EE\left[f\left(X_{1},\dotsc,X_{n-1},x\right)^{p}\right]^{1/p}$.
Then $\EE\left[h\left(X_{n}\right)^{p}\right]^{1/p}=\EE\left[f\left(X_{1},\dotsc,X_{n}\right)^{p}\right]^{1/p}\leq C''\mu_{\calL}\left(\mu_{n}\right)$,
where $C''=C\prod_{i=1}^{n-1}\gamma_{\calL}\left(\mu_{i}\right)$,
as long as $\mu_{n}$ is assumed to be finitely supported. Thus, by
Lemma \ref{measurelem} again, the same holds even without this assumption.
\end{proof}
\begin{lem}\label{measurelem}
The case $n=1$ of Proposition \ref{reductioncount} holds.
\end{lem}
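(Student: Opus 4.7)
The plan is to approximate the Borel probability $\mu$ by a sequence of finitely supported probability measures $(\nu_n)$ satisfying
\begin{enumerate}[(i)]
\item $\int f^p\,d\nu_n\to\int f^p\,d\mu$,
\item $\limsup_n\gamma_\calL(\nu_n)\leq\gamma_\calL(\mu)$.
\end{enumerate}
Granted (i) and (ii), applying the hypothesis to each $\nu_n$ gives $\bigl(\int f^p\,d\nu_n\bigr)^{1/p}\leq C\gamma_\calL(\nu_n)$ (with an implicit constant $C$ as in the proof of Proposition~\ref{reductioncount}); passing to the limit yields the claim for the arbitrary Borel $\mu$.

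Standard regularity reductions come first: by inner regularity of $\mu$ on the Polish space $G$ one may assume $\mathrm{supp}(\mu)$ is compact, and by Lusin's theorem one may further assume $f$ is continuous on $\mathrm{supp}(\mu)$, both at the cost of errors that vanish in the relevant limits.

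For the construction of $(\nu_n)$, the natural choice is empirical sampling: take $X_1,X_2,\ldots$ independent with law $\mu$ and set $\nu_n=\tfrac{1}{n}\sum_{i=1}^n\delta_{X_i}$. Property~(i) then holds almost surely by the strong law of large numbers applied to the bounded function $f^p$. For each fixed $L\in\calL$, the SLLN also yields $\nu_n(L)\to\mu(L)\leq\gamma_\calL(\mu)$ almost surely. The crux of the argument is to promote this pointwise-in-$L$ bound to a bound that is uniform over the uncountable family $\calL$ of cosets of proper algebraic subgroups of $\GG$.

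The algebro-geometric structure of $\calL$ enables this promotion. Stratifying $\calL=\bigcup_{D\geq 1}\calL_D$ by the degree of the defining ideal, each $\calL_D$ consists of sets cut out by polynomials of bounded degree lying in a finite-dimensional space, hence has bounded VC dimension; a Glivenko--Cantelli-type concentration then yields $\sup_{L\in\calL_D}|\nu_n(L)-\mu(L)|\to 0$ almost surely, for each fixed $D$. Cosets of subgroups of unbounded degree are controlled by exploiting that any proper algebraic subgroup of $\GG\subset\GL_d$ is contained in the normalizer of its identity component, which is itself of degree bounded in terms of $d$; combined with a careful accounting of how a coset decomposes into cosets of the identity component, this reduces the high-degree case to the bounded-degree Glivenko--Cantelli estimate. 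The main technical obstacle is making this last reduction uniform and quantitative in the empirical regime; once established, property~(ii) follows and the limiting argument above concludes the proof.
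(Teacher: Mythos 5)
Your approach is genuinely different from the paper's: the paper first reduces to ``algebraic'' measures (whose singular parts are supported on finitely many subvarieties of each dimension), then inducts on the dimension of that support, resampling only the top-dimensional stratum and controlling the uncountable supremum over $\calL$ via a finite replacement family $\calL_V$ of ``saturations'' $L^*$ (Lemma~\ref{sampling}) together with a structure theorem for maximal proper algebraic subgroups (Lemma~\ref{maxsub}). You instead propose a one-shot empirical-process argument, bounding $\sup_{L\in\calL_D}\lvert\nu_n(L)-\mu(L)\rvert$ via a VC/Glivenko--Cantelli estimate for each degree stratum and then reducing high-degree cosets to bounded degree. If that reduction worked, your route would be cleaner; but it has a real gap.

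The gap is the claim that every proper algebraic subgroup $L\leq\GG$ is contained in $N_\GG(L^0)$, ``which is itself of degree bounded in terms of $d$.'' This fails in two ways. First, $N_\GG(L^0)$ need not be proper: if $L$ is finite then $L^0=\{1\}$ and $N_\GG(L^0)=\GG$; more generally whenever $L^0$ is normal in $\GG$ (e.g.\ a product of simple factors) the same happens. Finite subgroups, and especially finite subgroups of a torus, are exactly where the paper must work hardest. Second, even when $N_\GG(L^0)$ is proper its degree is not bounded in terms of $d$ alone: one-dimensional subtori of the form $\{\mathrm{diag}(t,t^N)\}\subset\GL_2$ have degree growing with $N$, so the family of identity components is itself of unbounded degree. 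This is precisely why Lemma~\ref{maxsub} has the shape it does: it invokes Guralnick--Tiep to get a bounded-degree maximal overgroup in the semisimple case, but in the general connected case it must additionally introduce a \emph{countable} family $\HH_n=T_nSU$ coming from proper subgroups of the torus, and your proposal does not engage with this countable family at all (a per-$\HH_n$ SLLN is not enough to control $\sup_n\nu_n(\HH_m)$ uniformly without further argument). Absent a correct substitute for Lemma~\ref{maxsub}, the Glivenko--Cantelli step only covers a bounded-degree stratum and the uniform bound over all of $\calL$ does not follow. (A minor additional point: the compactness and Lusin reductions at the start of your proposal are unnecessary, since the SLLN already applies to bounded measurable $f^p$; and the VC-dimension bound for definable families over a non-archimedean local field, while true, deserves at least a citation since it is not the classical real semi-algebraic statement.)
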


\begin{proof}
To get a feeling for what is to be shown, we suggest the reader first try and establish the lemma in the case of the plane, $k=\RR$ and $G=(\RR^2,+)$. It captures already many of the difficulties. We shall handle the general case directly and shall write $\nu\left(g\right)$ for $\EE_{X\sim\nu}\left[g\right]$
for a finite Borel measure $\nu$ on $G$ and a bounded random variable
$g$. We assume that $\mu\left(f^{p}\right)^{1/p}\leq C\gamma_{\calL}\left(\mu\right)$,
$X\sim\mu$, whenever the $G$-valued Borel probability measure $\mu$
is assumed to be finitely supported, and we will prove that the same
holds without this assumption. The core idea of the proof
is to approximate an arbitrary Borel measure $\mu$ by a finitely
supported one by sampling according to $\mu$. However, making this
idea work requires some care since there are uncountably many cosets in $\calL$ and $\mu$ could give positive measure to some of them.

Every Borel probability measure $\nu$ on $G$ can be written as a
sum $\nu=\nu^{\left(0\right)}+\cdots+\nu^{\left(d\right)}$, $d=\dim G$,
where $\nu^{\left(i\right)}$ is supported on a countable union of
closed subvarieties of $G$ of dimension $i$, while $\nu-\left[\nu^{\left(0\right)}+\cdots+\nu^{\left(i\right)}\right]$
gives mass $0$ to every closed subvariety of $G$ of dimension at
most $i$. Throughout this proof, we shall call the measure $\nu$
\emph{algebraic} if a decomposition as above exists for $\nu$ with
all of the aforementioned countable unions being in fact finite unions,
i.e. when the support of each $\nu^{\left(i\right)}$ is contained
in a closed subvariety of pure dimension $i$.

Given $\eps>0$, we can write $\mu=\left(1-\delta_{\eps}\right)\nu_{\eps}+\delta_{\eps}\mu_{\eps}$,
where $\nu_{\eps}$ and $\mu_{\eps}$ are Borel probability measures
on $G$, $\delta_{\eps}\in\left[0,\eps\right]$, and $\nu_{\eps}$
is algebraic. Indeed, a decomposition $\mu=\mu^{\left(0\right)}+\cdots+\mu^{\left(d\right)}$
as above shows that $\mu$ is contained in the union of a countable
collection of closed subvarieties of $G$, and conditioning $\mu$
on the union of a finite subcollection of measure $1-\delta_{\eps}$
for $\delta_{\eps}\leq\eps$, results in $\nu_{\eps}$. Thus, it suffices
to prove that $\nu_{\eps}\left(f^{p}\right)^{1/p}\leq C\gamma_{\calL}\left(\nu_{\eps}\right)$
because then $\mu\left(f^{p}\right)=\left(1-\delta_{\eps}\right)C\gamma_{\calL}\left(\nu_{\eps}\right)+\eps\|f^{p}\|_{\infty}$,
while $\left|\gamma_{\calL}\left(\nu_{\eps}\right)-\gamma_{\calL}\left(\mu\right)\right|\leq2\eps$,
and thus taking $\eps\to0$ gives the desired inequality.

In other words, it suffices to prove that $\mu\left(f^{p}\right)^{1/p}\leq C\gamma_{\calL}\left(\mu\right)$
if $\mu$ is algebraic (under the assumption that this is true if
$\mu$ has finite support). The proof proceeds by induction on the
dimension of $\mu$, i.e. the largest $r$ such that $\mu^{\left(r\right)}\neq0$.
The base case $r=0$ holds by the hypothesis. We assume that the statement
holds for algebraic probability measures of dimension at most $r-1$,
and prove that it holds for a given algebraic probability measure $\mu$ of dimension $r$.

Let $m_{r}=\mu^{\left(0\right)}+\cdots+\mu^{\left(r-1\right)}$. For
$\ell\geq1$, consider the random measure $\mu_{\ell}=m_{r}+\mu^{\left(r\right)}\left(G\right)\nu_{\ell}$,
where $\nu_{\ell}=\left(\delta_{X_{1}}+\cdots+\delta_{X_{\ell}}\right)/\ell$
and $X_{i}$ are i.i.d. random variables chosen according to $\mu^{\left(r\right)}/\mu^{\left(r\right)}\left(G\right)$.
Then $\mu_{\ell}\left(f^{p}\right)^{1/p}\leq C\gamma_{\calL}\left(\mu_{\ell}\right)$
by the induction hypothesis.
By the strong law of large numbers, $\mu_{\ell}\left(f^{p}\right)\to\mu\left(f^{p}\right)$ almost surely,
and for each fixed $L\in\calL$, 
$\mu_{\ell}\left(L\right)\to\mu\left(L\right)$ almost surely.
By the former limit, it suffices to prove
that $\gamma_{\calL}\left(\mu_{\ell}\right)\to\gamma_{\calL}\left(\mu\right)$
almost surely. This would follow at once from the latter limit if $\calL$ were finite, but
requires care since $\calL$ is uncountable. We shall show that $\limsup_{\ell}\gamma_{\calL}\left(\mu_{\ell}\right)\leq\gamma_{\calL}\left(\mu\right)$,
which suffices for our purposes (but note that $\liminf_{\ell}\gamma_{\calL}\left(\mu_{\ell}\right)\geq\gamma_{\calL}\left(\mu\right)$
also holds, and follows elementarily). The proof makes use of Lemmas
\ref{sampling} and \ref{maxsub} below.

Let $V^{\left(i\right)}$ be a closed subvariety of $G$ of pure dimension $i$ containing
the support of $\mu^{\left(i\right)}$, and write $V=\bigcup_{i=0}^{r-1}V^{\left(i\right)}$.
For $L\in\calL$, let $V_{L}$ be the union of the irreducible components
of $V$ that are contained in $L$, and let $L^{*}\subset L$ be the
intersection of all elements of $\calL$ that contain $V_{L}$. Note
that $L^{*}$ belongs to $\calL\cup\left\{ \emptyset\right\} $, and
that $\calL_{V}\coloneqq\left\{ L^{*}\mid L\in\calL\right\} $ is
a finite set.

\begin{lem}\label{sampling}The following holds almost surely: for all $\ell \ge 1$ and all $L \in \mathcal{L}$, \begin{equation}\label{nubound}\mu_\ell(L) \leq \mu_\ell(L^*) + \frac{C_L}{\ell},\end{equation} where $C_L\ge 1$ depends only on the degree of $L$, on $\dim G$ and on $\mu$. Moreover, given a proper algebraic subgroup $H$ of $G$, the following holds almost surely: for all $\ell\ge 1$ and all $g \in G$, \eqref{nubound} holds for $L=gH$ and $C_L=\mu^{(r)}(G)$.
\end{lem}

Note the subtle difference between the two assertions: in the first \eqref{nubound} holds almost surely simultaneously for all $L$, while in the second it holds almost surely simultaneously only for cosets of a fixed subgroup.

\begin{proof}[Proof of Lemma \ref{sampling}]
First note that $m_{r}\left(L^{*}\right)=m_{r}\left(L\right)$
for all $L\in\calL$. Indeed, if $i\leq r$ then $\mu^{\left(i\right)}\left(L\setminus L^{*}\right)=\mu^{\left(i\right)}\left(\left(L\setminus L^{*}\right)\cap V^{\left(i\right)}\right)=0$
since $\left(L\setminus L^{*}\right)\cap V^{\left(i\right)}$ is contained
in a closed subvariety of dimension $<i$. Thus, to prove \eqref{nubound}, it
suffices to prove that almost surely for all $L\in\calL$ we have
$\nu_{\ell}\left(L\setminus L^{*}\right)\leq\frac{C_{L}}{\ell\mu^{\left(r\right)}\left(G\right)}$.
But $\nu_{\ell}\left(L\setminus L^{*}\right)=\nu_{\ell}\left(\left(L\cap V^{\left(r\right)}\right)\setminus L^{*}\right)$
because $\nu_{\ell}$ is supported on $V^{\left(r\right)}$. Since $\left(L\cap V^{\left(r\right)}\right)\setminus L^{*}$
is contained in a closed subvariety $W_{L}$ of dimension $<r$ and
degree controlled by $\deg L$ and $\deg V^{\left(r\right)}$ only,
it remains to prove the following claim: There are integers $\left(N_{D}\right)_{D\geq1}$
such that the following holds almost surely:
for every closed subvariety $W$ of $G$ of dimension $<r$,
$\ell \nu_{\ell}\left(W\right)\leq N_{\deg W}$,
i.e. $W$ contains at most $N_{\deg W}$ of the points $X_{1},\dotsc,X_{\ell}$.

We can assume without loss of generality that $W$ is irreducible.
The proof is by induction on $\dim W+\deg W$. In the base case $\dim W=0$,
$W$ is a point, and the claim follows since for $i\neq j$, $\PP\left(X_{i}=X_{j}\right)=\EE(\PP\left(X_{i}=X_{j}\mid X_{j}\right))=0$
since $r\geq1$ and thus $\mu^{\left(r\right)}$ has no atoms. For
the induction step, assume that the claim holds for all $W$ of dimension
at most $p-1$, with some $N_{D}=N_{p-1,D}$, and consider now dimension
$p$. If there are two distinct irreducible closed subvarieties $W_{1}$ and
$W_{2}$, of dimension at most $p$ and degree at most $D$, containing
$X_{1},\dotsc,X_{N_{p-1,D^{2}}+1}$, then $X_{1},\dotsc,X_{N_{p-1,D^{2}}+1}$
belong to $W_{1}\cap W_{2}$, which is a closed subvariety of dimension
$<p$ and degree at most $D^{2}$ by Bezout \eqref{bezoutineq}. Thus, for every $D\geq1$, almost
surely there is at most one closed subvariety $W$ of dimension at
most $p$ and degree at most $D$ containing $X_{1},\dotsc,X_{N_{p-1,D^{2}}+1}$.
But then, almost surely $X_{N_{p-1,D^{2}}+2}\notin W$ since $\mu^{\left(r\right)}$
gives measure $0$ to subvarieties of dimension $<r$.

It is easier to verify the second assertion of the lemma: indeed for a given $H$, and $i \neq j$, \begin{align*}\PP(\exists g\in G, X_i, X_j \in gH\setminus (gH)^*) &\leq \EE( \PP(X_j \in X_iH\setminus (X_iH)^*|X_i))\\ &= \EE(\mu^{(r)}( X_iH\setminus (X_iH)^*)/\mu^{(r)}(G))=0\end{align*}
So almost surely we have: $\nu_k(L\setminus L^*)\leq 1/k$ for all $L$ of the form $L=gH$. 
\end{proof}

\begin{lem}\label{maxsub}If $\GG$ is a connected linear algebraic group in characteristic $0$, then there is a countable family of proper closed subgroups $\HH_n$, $n \ge 1$, and a number $N_\GG$ such that every proper algebraic subgroup of $\GG$ is either contained in $\HH_n$ or contained in a proper algebraic subgroup of degree at most $N_\GG$. When $\GG$ is semisimple, only the second option occurs and it also holds in positive characteristic for subgroups  $\GG$ that are not subfield subgroups.
\end{lem}

\begin{proof}Consider first the semisimple case. By \cite[Theorem 11.6]{guralnick-tiep} (see also \cite[Lemma 4.2]{bggt-israel} there is a finite number of finite dimensional irreducible $\GG$-modules $V_{\rho_i}$ with the property that a proper algebraic subgroup $L$ of $\GG$ cannot act irreducibly on all of them unless we are in positive characteristic and $L$ is a finite subfield subgroup. The latter means that $L$ is conjugate to a subgroup of $\GG(F)$ for some finite subfield $F$. In the former case $L$ must be contained in a parabolic subgroup $\Stab(V)$ of $\GL(V_{\rho_i})$ for some proper subspace $V$ of the representation space $V_{\rho_i}$. These have bounded degree in $\GG$ irrespective of the choice of $V$. This proves the lemma in the semisimple case. Now consider the general case in characteristic zero. Write $\GG=TSU$ a  Levi decomposition, where $S$ is semisimple, $T$ a torus commuting with $S$, and $U$ the unipotent radical. There are only countably many proper algebraic subgroups of $T$, say a family $\{T_n\}_n$ (see \cite[3.2.19]{BombieriGubler2006}), so we can set $\HH_n=T_nSU$.
Suppose  $L\leq \GG$ is not contained in any $\HH_n$, so $T\leq L$.
If $LU$ is proper, then $L\subset TS'U$ for a proper algebraic subgroup $S'$ of $S$ and we are done by the semisimple case. If $LU=\GG$, then every Levi subgroup $R$ of $L$ is a Levi
subgroup of $\GG$. Indeed, $L\cap U$ is the unipotent radical of
$L$ since it is normal, connected, unipotent and $L/L\cap U=\GG/U$
is reductive. Hence $L=R\left(H\cap U\right)$, and so $\GG=LU=RU$
(and $R\cap U=\left\{ 1\right\} $). Thus $L$ is conjugate to a
subgroup containing $TS$ since all Levi subgroups of $\GG$ are conjugate,
and so we may assume that $TS \leq L$.
Hence $L=TSU'$ for some proper closed subgroup $U'\leq U$. Proper closed subgroups of $U$ are connected and in characteristic zero are of bounded degree (being bounded products of bounded degree homomorphic images of the additive group $(\GG_a,+)$). This ends of the proof. 
\end{proof}

We can now finish the proof of Lemma \ref{measurelem}. Let $\mathcal{L}_n$ be the family of all cosets of $\HH_n$ and $\mathcal{L}_\GG$ the family of all cosets of proper algebraic subgroups of degree at most $N_\GG$.  
In characteristic zero, we can apply Lemma \ref{maxsub} and get for all $\ell \ge 1$,
$$\sup_{L \in \mathcal{L}} \mu_\ell(L) \leq \max\{ \sup_{L \in \mathcal{L}_\GG} \mu_\ell(L), \sup_{L \in \mathcal{L}_n, n\ge 1} \mu_\ell(L)\}.$$
In positive characteristic when $\GG$ is semisimple the same lemma applies except for cosets of finite subfield subgroups. But, since $k$ is a local field, there is a largest finite subfield $F$ of $k$ and hence a uniform bound on finite subfield subgroups in $\GG(k)$. So up to updating the value of $N_\GG$ we get the same bound. Hence by Lemma \ref{sampling}, almost surely  
$$\sup_{L \in \mathcal{L}} \mu_\ell(L) \leq \max\{ \mu_\ell(L^*)\mid L^* \in \mathcal{L}_V\}+ O_{\mu,\GG}(1)/\ell $$
and letting $\ell$ tend to infinity, we obtain $\limsup_\ell \gamma_{\mathcal{L}}(\mu_\ell)\leq  \max\{ \mu(L^*)\mid L^* \in \mathcal{L}_V\}\leq \gamma_{\mathcal{L}}(\mu)$
because $\mathcal{L}_V$ is finite and $\mu_\ell(L) \to \mu(L)$ almost surely for each $L \in \mathcal{L}$.
This completes the proof of Lemma \ref{measurelem}.

\end{proof}

\begin{rem}\label{solred} As its proof shows, Proposition \ref{reductioncount}  also holds if $\mathcal{L}$ is taken to be the cosets of a family of algebraic subgroups, that is closed under intersection and that has maximal subgroups of bounded degree. For example the family of cosets of virtually solvable algebraic subgroups of $\GL_d(k)$ is such a family. Indeed, up to bounded index such groups can be triangularized. In characteristic zero, this is  \cite[3.6, 10.10]{wehrfritz}  with a bound that depends only on $d$. In positive characteristic, the bound would depend on $k$ and also follows from  \cite[3.6, 10.10]{wehrfritz} and the observation that there is a maximal finite subfield in $k$. 
\end{rem}

\subsection{Proof of Theorem \ref{naLO}}

The proof of Theorem \ref{naLO} is similar to that of Theorem \ref{LOvarieties}. The main spectral ingredient is the following:

\begin{thm}[uniform $\ell^2$ gap]\label{cor2bis} Given $d \in \NN$ there is $\eps_d>0$ such that the following holds. Let $K$ be a field and  $\Gamma \leq \GL_d(K)$ be countable a subgroup. Suppose a finite set $S\subset \Gamma$ generates a non-amenable subgroup. Then for every unit vector $v \in \ell^2(\Gamma)$ there is $s \in S$ such that $\|s\cdot v - v\|_{\ell^2(\Gamma)} \ge \eps_d$. In particular, if $\mu$ is any probability measure on $\Gamma$, then $$\|\mu\|_{\ell^2(\Gamma)} \leq \exp(- \beta_{\mathcal{L}}(\mu)\eps_{d}^2/4)$$
where $\mathcal{L}$ is the family of all cosets of amenable subgroups of $\Gamma$.  
\end{thm}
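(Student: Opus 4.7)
The plan is to derive both parts of Theorem \ref{cor2bis} from Theorem \ref{main1bis} in essentially the same way that Corollary \ref{cor1bis} is derived from it, using the classical fact that a virtually solvable group is amenable (so non-amenable implies not virtually solvable).

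\medskip

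For the Kazhdan assertion, I would first apply the reformulation of Theorem \ref{main1bis} given in \S \ref{reform}, taking the ambient countable group to be $\langle S \rangle$ itself and $H = \{1\}$. The hypothesis that $\langle S \rangle$ be irreducibly dense in $\langle S \rangle$ is tautological, while the requirement that $\{1\}$ not be irreducibly dense in $\langle S \rangle$ amounts, by the criterion of \S \ref{reform}, to $\operatorname{Rad}(\GG) \subsetneq \GG^0$ where $\GG$ is the Zariski closure of $\langle S \rangle$, i.e.\ to $\langle S \rangle$ not being virtually solvable; this holds since $\langle S \rangle$ is non-amenable. The theorem thus yields that $S$ is $\eps_d$-Kazhdan for $\lambda_{\langle S \rangle}$. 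To promote this to $\lambda_\Gamma$ on the possibly larger space $\ell^2(\Gamma)$, I would invoke Lemma \ref{quotient}: fixing a system $\sigma$ of right coset representatives of $\langle S \rangle$ in $\Gamma$, the set map $\phi \colon \Gamma \to \langle S \rangle$ sending $s \cdot \sigma(C)$ to $s$ is $\langle S \rangle$-equivariant, and Lemma \ref{quotient} then transfers the Kazhdan property from $\ell^2(\langle S \rangle)$ to $\ell^2(\Gamma)$ without loss of constant.

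\medskip

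For the operator norm bound I would mirror the argument for Corollary \ref{cor1bis}, with ``proper algebraic subgroup'' replaced throughout by ``amenable subgroup''. Given a unit $v \in \ell^2(\Gamma)$, set $B_v = \{ s \in \Gamma : \|\lambda_\Gamma(s) v - v\| < \eps_d \}$, which is symmetric (by unitarity) and contains $1$. Applying the first part of the theorem to any finite symmetric $F \subset B_v$ containing $1$, we conclude that $\langle F \rangle$ must be amenable, for otherwise Kazhdan would produce some $s \in F$ with $\|\lambda_\Gamma(s) v - v\| \geq \eps_d$, contradicting $F \subset B_v$. Amenability being preserved under directed unions, $\langle B_v \rangle = \bigcup_F \langle F \rangle$ is amenable, hence $B_v$ is contained in an element of $\mathcal{L}$. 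Using $\mu^{-1} * \mu(H) = \sum_g \mu(g) \mu(gH)$ together with the fact that each $gH$ lies in $\mathcal{L}$ when $H$ is amenable, one gets $\mu^{-1} * \mu(B_v) \leq \gamma_{\mathcal{L}}(\mu) = 1 - \beta_{\mathcal{L}}(\mu)$. Combined with Lemma \ref{norm} this yields
\[
2\bigl(1 - \|\lambda_\Gamma(\mu)\|^2\bigr) \;=\; \inf_{\|v\|=1} \EE_{\mu^{-1} * \mu}\|\lambda_\Gamma(s) v - v\|^2 \;\geq\; \eps_d^2 \, \beta_{\mathcal{L}}(\mu),
\]
and the bound $\|\mu\|_{\ell^2(\Gamma)} \leq \exp\bigl(-\eps_d^2 \beta_{\mathcal{L}}(\mu) / 4\bigr)$ follows from $1 - x \leq e^{-x}$.

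\medskip

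The main obstacle is the bootstrap from $\ell^2(\langle S \rangle)$ to $\ell^2(\Gamma)$: direct sums of unitary representations do not preserve Kazhdan constants in general, so the decomposition $\lambda_\Gamma|_{\langle S \rangle} \cong \bigoplus_C \lambda_{\langle S \rangle}$ over right $\langle S \rangle$-cosets is not by itself enough, and one genuinely needs the equivariant set map $\phi$ combined with Lemma \ref{quotient}. Once the Kazhdan assertion is secured, the rest is a routine adaptation of the derivation of Corollary \ref{cor1bis}.
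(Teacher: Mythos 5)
Your proof is correct and follows essentially the same route as the paper's. The first assertion reduces to Theorem \ref{main1bis} via the semisimple reduction --- which you invoke through the reformulation of \S\ref{reform} while the paper carries out the same reduction inline --- together with the coset-map bootstrap via Lemma \ref{quotient}; the second assertion is the $B_v$/amenability/Lemma~\ref{norm} computation, mirroring the paper's derivation of Corollary~\ref{cor1bis}.
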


\begin{proof}[Proof of Theorem \ref{cor2bis}] As we already observed (cf. the first paragraph of \S \ref{pfmain1bis}) if $\pi:\Gamma \to \overline{\Gamma}$ is a surjective group homomorphism and $\pi(S)$ is a $\eps$-Kazhdan set for $\lambda_{\overline{\Gamma}}$, then $S$ is an $\eps$-Kazhdan set for $\lambda_\Gamma$. Also if $S\subset \Gamma$ is an $\eps$-Kazhdan set for $\lambda_{\Gamma'}$, where $\Gamma'\leq \Gamma$ is  a subgroup, then it is also an $\eps$-Kazhdan set for $\lambda_\Gamma$. So without loss of generality $S$ generates $\Gamma$ and if $\GG$ is the Zariski closure of $\Gamma$ and $R$ its solvable radical, the quotient $\GG/R$ is semisimple. It acts by conjugation on its connected component $\mathbb{S}$. As is well-known the index of inner automorphisms in all automorphisms of $\mathbb{S}$ is bounded in terms of $d$ only. So we obtain a subgroup $\Gamma'\leq \Gamma$ of bounded index and a homomorphism of $\Gamma'$ to $\mathbb{S}$ with Zariski-dense image. There is some bounded $n_0=n_0(d)$ such that $(S\cup S^{-1})^{n_0} \cap \Gamma'$ generates $\Gamma'$ (cf. see \cite[Lemma C.1]{breuillard-green-tao-linear}).  These remarks allow us to reduce the first assertion of the Theorem to the case when $\Gamma$ is Zariski-dense in a connected semisimple algebraic group. Then the result is a special case of Theorem \ref{main1bis}. 
To prove the second assertion about $\mu$, note that as in the proof of Corollary \ref{cor1bis}, if $v \in \ell^2(\Gamma)$ is a unit vector and $B_v=\{s \in \Gamma\mid \|\pi(s)v-v\|<\eps_d\}$, then by the above paragraph, $B_v$ generates an amenable group. Hence $\sup_v \nu(B_v) \leq \sup \{\nu(A)\mid A\leq \Gamma \textnormal{ amenable}\}$, where $\nu:=\mu^{-1}*\mu$. Then $$\|\pi(\mu)\|^2=\sup_{v} \langle \pi(\nu)v,v\rangle = \sup_v 1-\frac{1}{2}\EE_\nu(\|\pi(s)v-v\|^2)\leq 1-\frac{\eps_d^2}{2}(1-\sup_v \nu(B_v)).$$
Finally note that $\nu(A)=\mu^{-1}*\mu(A)\leq \sup_{g\in \Gamma} \mu(gA)$.
\end{proof}

\noindent \emph{Remark.} For the proof of the first assertion in Theorem \ref{cor2bis}, we could have used the uniform Tits alternative  \cite{strong-tits} in place of Theorem \ref{main1bis}. We note that this was used in \cite[Corollary 1.6]{aoun-sert} to obtain effective subgaussian estimates for the drift of random matrix products in semisimple Lie groups.

\begin{proof}[Proof of Theorem \ref{naLO}] If the $X_i$ are finitely supported, we have \begin{align*}\PP(X_1\cdots X_n =g)&=\langle \lambda_{\Gamma}(\mu_1*\cdots*\mu_n)\delta_e,\delta_g\rangle\\\notag &\leq \prod_{i=1}^n\|\lambda_{\Gamma}(\mu_i)\|_{\op} \leq e^{-2c_{d} \sum_{i=1}^n \beta_{\calL}(\mu_i)} \leq \prod_{i=1}^n (1-\beta_{\calL}(\mu_i))^{c_d}. \end{align*} where $c_{d} =\eps_d^2/8$ as follows from  Theorem \ref{cor2bis}. The family $\calL$ can be taken to be that of all virtually solvable algebraic subgroups of $\GL_d$, since by the Tits alternative \cite{tits} finitely generated amenable subgroups are virtually solvable.   To extend the bound to arbitrary random variables, we apply Proposition \ref{reductioncount} with $f=1_{\{g\}}$, $p=c_d$, $C=1$ (see Remark \ref{solred}).
\end{proof}

\subsection{Proof of Corollary \ref{logescape}}
We conclude by deducing Corollary \ref{logescape} from Theorem \ref{LOvarieties}. In view of Lemma \ref{semisimplegen}, we may assume without loss of generality that $|S|\leq 2\dim\GG-1$. Consider the uniform probability measure $\mu$ on $S$. According to Proposition \ref{asym} there is $k\leq C(d)$ such that $\mu^{k}$ is not supported on a coset of a proper algebraic subgroup of $\GG$. In particular $\beta:=\beta(\mu^{k}) \ge |S|^{-k}\ge (2\dim \GG-1)^{-k}$ and Theorem \ref{LOvarieties} applies to $n$ i.i.d. variables with law $\mu^{k}$. So  $\eqref{LOnon-var}$ gives for each $g \in \GG(K)$: 
$$\mu^{kn}(g^{-1}V) \leq d^{1/2}N\exp(-c\beta n).$$
If $n \gg_d \log(1+N)$, this gives $\mu^{k n}(w^{-1}V)<1$ and hence $wS^{kn} \nsubseteq V$ for each $w \in \cup_{i=0}^{k-1}S^i$. In particular $S^n  \nsubseteq V$ for all $n\gg_d \log(1+N)$ as desired.

\bigskip

\noindent \emph{Acknowledgement.} For the purpose of Open Access, the authors have applied a CC BY public copyright licence to any Author Accepted Manuscript  version arising from this submission.

\bibliographystyle{plainurl}
\bibliography{wordmaps}

\end{document}